\newcommand{\ud}{\mathrm{d}}
\newcommand{\beq}{\begin{equation}}
\newcommand{\eeq}{\end{equation}}
\newcommand{\ba}{\begin{eqnarray}}
\newcommand{\ea}{\end{eqnarray}}
\newcommand{\bs}{\boldsymbol}
\newtheorem{theorem}{Theorem}[section]
\newtheorem{remark}{Remark}[section]
\newtheorem{proposition}{Proposition}[section]
\newtheorem{lemma}{Lemma}[section]
\newtheorem{corollary}{Corollary}[section]
\title{Well posedness of a linearized fractional derivative fluid model}
\author{Arnaud Heibig and Liviu Iulian Palade
\thanks{Corresponding author.  E-mail: liviu-iulian.palade@insa-lyon.fr, Fax: +33 472438529}   }
\numberwithin{equation}{section}
\begin{document}

\maketitle

\begin{flushleft}

Universit\'e de Lyon, CNRS, INSA-Lyon \\ Institut Camille Jordan UMR5208 \& P\^ole de Math\'ematiques, B\^at. L\'eonard de Vinci no. 401, 21 avenue Jean Capelle, F-69621, Villeurbanne, France.

\end{flushleft}

\begin{abstract}

The one-dimensional fractional derivative Maxwell model (e.g. Palade et al. Rheol. Acta {\bf 35}, 265, 1996), of importance in modeling the linear viscoelastic response in the glass  transition region, has been generalized in Palade et al. Int. J. Non-Linear Mech. {\bf 37}, 315, 1999, to objective three-dimensional  constitutive equations (CEs) for both fluids and solids.  Regarding the rest state stability of the fluid CE, in Heibig and Palade J. Math. Phys. {\bf 49}, 043101, 2008, we gave a proof for the existence of weak solutions to the corresponding boundary value problem.  The aim of  this work is to achieve the study of the existence  and uniqueness of the aforementioned solutions and to present smoothness results.

\end{abstract} 

\begin{flushleft}

Keywords: objective fractional derivative constitutive equation; viscoelasticity; rest state stability analysis;  Hadamard stability analysis; solution existence, uniqueness, smoothness;  

\end{flushleft}

%%%%%%%%%%%%%%%%%%%%%%%%%%%%%%%%%%%%%%%%%%%%%%%%%%%%%%%%%%%%%%%%
%%%%%%%%%%%%%%%%%%%%%%%%%%%%%%%%%%%%%%%%%%%%%%%%%%%%%%%%%%%%%%%
\section{Introduction}\label{intro}
%%%%%%%%%%%%%%%%%%%%%%%%%%%%%%%%%%%%%%%%%%%%%%%%%%%%%%%%%%%%%%%
%%%%%%%%%%%%%%%%%%%%%%%%%%%%%%%%%%%%%%%%%%%%%%%%%%%%%%%%%%%%%%
Fractional derivative constitutive equations (CEs) have been found to accurately predict, among others, the stress relaxation of viscoelastic fluids in the glass transition and glassy (high frequency) states.  The experimental behavior of storage $G'$ and loss $G''$ moduli (obtained upon using the time - temperature superposition principle - see \cite{pal2}, \cite{pal3}) of a linear, narrow molecular weight series of polybutadienes is exceptionally well predicted by linearized fractional derivative models as can be reckoned from   \cite{pal1}.  Polybutadienes are of utter importance for the tire industry, for manufacturing certain solid propergols, etc. Similar excellent agreements between frequency sweep experimental data obtained on other polymers (e.g. polystyrenes) and theoretical predictions of linear fractional derivative models are reported in \cite{fr,hey1, main}.

The object of study is the below given objective, fractional derivative viscoelastic (incompressible) fluid constitutive equation (CE) (see \cite{pal4})

\begin{eqnarray}\label{ce}
\lefteqn{ \mathbf{S}(t)+\lambda^{\alpha}\mathbf{F}(t)\bigg\{\int_{-\infty}^{t}\mu_{1}(t-\tau)
\mathbf{F}^{-1}(\tau)\stackrel{\bigtriangledown}{\mathbf{S}}(\tau)\left[\mathbf{F}^{-1}(\tau)\right]^{T}
\ud \tau \bigg\}
\mathbf{F}(t)^{T} } \nonumber\\
& & =G\lambda^{\beta}\mathbf{F}(t)\bigg\{\int_{-\infty}^{t}\mu_{2}(t-\tau)
\mathbf{F}^{-1}(\tau)\mathbf{A}_{1}(\tau)\left[\mathbf{F}^{-1}(\tau)\right]^{T}\ud \tau\bigg\}
\mathbf{F}(t)^{T} 
\end{eqnarray}

Function $\mathbf{S}$ is the (objective) stress tensor and $\stackrel{\bigtriangledown}{\mathbf{S}}$ its objective upper convected derivative defined by (with $D/Dt$ denoting the material derivative and $\mathbf{L}$ the velocity gradient; see for example \cite{hp2},\cite{mor}, \cite{tig}):

\beq \label{ody}
\stackrel{\bigtriangledown}{\mathbf{S}}=\frac{D {\bf S}}{D
t}-\mathbf{L}\mathbf{S}-\mathbf{S}\mathbf{L}^{T},
\eeq 

Function $\mathbf{F}$ is the strain gradient and  $\mathbf{A}_{1}=\nabla\mathbf{u}+(\nabla\mathbf{u})^T={\bf L}+{\bf L}^T$ is the first Rivlin-Ericksen tensor.  The model parameters are such that  $0<\lambda$, $0<\alpha<\beta<1$.  $\mu_{1,2}(t)$ are two memory kernels given by:

\beq\label{mu}
\mu_{1}(t-\tau)=\frac{(t-\tau)^{-\alpha}}{\Gamma(1-\alpha)},\,\,
\mu_{2}(t-\tau)=\frac{(t-\tau)^{-\beta}}{\Gamma(1-\beta)}
\eeq 

The stability of the rest state is now investigated using the linearized theory.  As shown in \cite{pal4} and \cite{pal7}, it is first assumed that the stress tensor $\mathbf{S}=\mathit{O}(\epsilon)$ and  the deformation 
gradient  $\mathbf{F}(t)=\mathbf{1}+\epsilon\mathbf{J}(t)+\mathit{O}(\epsilon^{2})$. Since $\mathbf{L}=\dot{\mathbf{F}}\mathbf{F}^{-1}$ (see for ex. \cite{hp2}, \cite{ddj}, \cite{mor}), $\mathbf{L}=\mathit{O}(\epsilon)$.  Hence the velocity $\mathbf{u}=\mathit{O}(\epsilon)$, and the first Rivlin-Ericksen tensor $\mathbf{A}_{1}=\mathit{O}(\epsilon)$ as well.  Therefore, keeping only terms
of $\mathit{O}(\epsilon)$, within the linear response theory eq.\eqref{ce} reduces to :

\beq\label{law2}
\mathbf{S}(t)+\lambda^{\alpha}\int_{-\infty}^{t}\mu_{1}(t-\tau)\frac{\partial \mathbf{S}(\tau)}{\partial
\tau}\ud\tau=G\lambda^{\beta}\int_{-\infty}^{t}\mu_{2}(t-\tau)\mathbf{A}_{1}(\tau)\ud \tau
\eeq

Next, assume the fluid is contained in a bounded volume $\Omega\subset\mathbb{R}^3$ whose boundary $\partial\Omega$ is sufficiently smooth, and set in motion at $t=0$.  The CE in eq.\eqref{law2} then takes the form:

\beq\label{law3}
\mathbf{S}(t)+\lambda^{\alpha}\int_{0}^{t}\mu_{1}(t-\tau)\frac{\partial \mathbf{S}(\tau)}{\partial
\tau}\ud\tau=G\lambda^{\beta}\int_{0}^{t}\mu_{2}(t-\tau)\mathbf{A}_{1}(\tau)\ud \tau
\eeq 

The above may be re-written in condensed form using the  Caputo operators $D^{\alpha}_{t}$ and $I^{1-\beta}_{t}$ as:

\beq\label{law4}
\mathbf{S}(t)+\lambda^{\alpha}\, D^{\alpha}_{t}\mathbf{S}=G\lambda^{\beta}\, I^{1-\beta}_{t}\mathbf{A}_{1}
\eeq

where for an absolutely continuous function $f:\mathbb{R}_+\to \mathbb{C}$:

\begin{equation}\label{cfd}
D^{\alpha}_{t}f(t)=\frac{1}{\Gamma(1-\alpha)}
\int^{t}_{0}\frac{f'(\tau)}{(t-\tau)^\alpha} \ud\tau  
\end{equation}

and for $f\in L^1_{\text{loc}}(\mathbb{R}_+)$,

\begin{equation}\label{cfi}
I^{1-\beta}_{t}f(t)=\frac{1}{\Gamma(1-\beta)}\int^{t}_{0}
\frac{f(\tau)}{(t-\tau)^\beta} \ud\tau 
\end{equation}

As shown in \cite{pal4,pal7}, investigating the stability of the rest state is tantamount to studying  the existence and the uniqueness of solutions to the following initial boundary value problem (IBVP):

\begin{subeqnarray}\label{bvp1}
& & \frac{\partial\mathbf{u}}{\partial t}=-\nabla p+\nabla\cdot\mathbf{S}  \label{bvp1 1}\\
& & \mathbf{S}+\lambda^\alpha D^{\alpha}_{t}\mathbf{S}=G\lambda^\beta  I^{1-\beta}_{t}\mathbf{A}_{1},\quad \mathbf{A}_{1}=\nabla\mathbf{u}+(\nabla\mathbf{u})^T  \label{bvp1 2}\\
& & \mathbf{\nabla}\cdot\mathbf{u}=0,\quad\text{in}\quad \lbrack 0,+\infty \lbrack  \times
 \Omega,\, \Omega\subset\mathbb{R}^3 \label{bvp1 3}\\
& & \mathbf{u}=\mathbf{0},\quad\text{in} \quad   \lbrack 0,
+\infty \lbrack \times \partial\Omega \label{bvp1 4}\\
& & \mathbf{u}(t=0)=\mathbf{u}_0,\quad \mathbf{S}(t=0)=\mathbf{S}_0 \label{bvp1 5} \\
\end{subeqnarray}

In the above system of equations we assume ${\bf u}: [0,+\infty[ \times  \Omega \to\mathbb{R}^3$, $\nabla\cdot\mathbf{u}_0=0$,

$p:[0,+\infty[ \times \Omega   \to\mathbb{R}$, $\mathbf{S}: [0,+\infty[ \times \Omega \to \mathscr{M}_{3,3}(\mathbb{R})$, $0<\alpha<\beta<1$.  Denote $\delta=\beta-\alpha>0$.  

A change of variables on $({\bf x},t)$ can be performed to eliminate the CE  parameters $\lambda$ and $G$ (see \cite{pal7}).  This is carried out only for convenience; in no way the generality of this paper results is shrinked down.  Therefore, from now on assume $\lambda=G=1$.  

At this stage recall that an existence result for the initial boundary value problem given in equations \eqref{bvp1}was presented in \cite{pal7}.  The present paper, which is a continuation of \cite{pal7}, is organized as follows: 

\begin{itemize}
 \item Section \ref{wfbvp} presents the weak formulation of the boundary value problem.
\item Section \ref{wf} is devoted to proving the existence and uniqueness of the solutions.  We further on use the existence theorem obtained in \cite{pal7} to state a general existence and uniqueness result.  
\item Section \ref{so} deals with the functional framework within which the solution continuity at $t=0$ is proved.
\item Section \ref{n} presents the proof of the solution continuity at $t=0$.  
\item Section \ref{ie} contains results on the solution smoothness.
\end{itemize}

%%%%%%%%%%%%%%%%%%%%%%%%%%%%%%%%%%%%%%%%%%%%%%%%%%%%%%%%%%%%%%%%
%%%%%%%%%%%%%%%%%%%%%%%%%%%%%%%%%%%%%%%%%%%%%%%%%%%%%%%%%%%%%%%
\section{Weak formulation of the IBVP}\label{wfbvp}
%%%%%%%%%%%%%%%%%%%%%%%%%%%%%%%%%%%%%%%%%%%%%%%%%%%%%%%%%%%%%%%
%%%%%%%%%%%%%%%%%%%%%%%%%%%%%%%%%%%%%%%%%%%%%%%%%%%%%%%%%%%%%%

All time-depending functions involved in the current stability analysis, save for when stated otherwise, are causal functions (i.e. set equal to zero on $\mathbb{R}_-$).  Hence the convolution in time is simply $(f\ast g)(t):=\displaystyle \int_0^t f(s)g(t-s)\ud s$.

We first present the weak formulation of the boundary value problem eqs.\eqref{bvp1}: find ${\bf u}\in\mathscr{C}^0([0,+\infty[,\,L^2(\Omega)^3)   $ $\cap L^1_{\text{loc}}(\mathbb{R}_+,H^1_0(\Omega)^3)$, $\nabla\cdot{\bf u}=0$, $[{\bf S}]_{ij}\in\mathscr{C}^0 \left( ]0,+\infty[,\,L^2(\Omega)\cap L^1_{\text{loc}}\left(\mathbb{R}_+ ,L^2(\Omega) \right)  \right)$, $i,j=1,2,3$, such that for any test-functions $\forall\boldsymbol{ \theta}\in\left(H^1_0(\Omega)\right)^3,\, \nabla\cdot\boldsymbol{ \theta}=0,\,\forall {\bf a}\in\left(\mathscr{D}(\Omega)\right)^3,\,\forall 
\psi\in\mathscr{C}^{\infty}_{00}([0,+\infty[)$, where $\mathscr{C}^{\infty}_{00}([0,+\infty[) $ denotes the space of $\mathscr{C}^{\infty} $ class functions that vanish in a neighborhood of $+\infty$, the following equations hold true:

\beq\label{fv1}
\psi(0)\int_{\Omega}{\bf u}_0({\bf x})\cdot\bs{\theta}({\bf x}) \ud {\bf x}+\int^{+\infty}_{0}\int_{\Omega}{\bf u}(t,{\bf x})\cdot\bs{\theta}({\bf x})\psi'(t) \ud {\bf x}\ud t =\int^{+\infty}_{0}\int_{\Omega}({\bf S}(t,{\bf x})\colon\nabla\bs{\theta}({\bf x}))\psi(t) \ud {\bf x} \ud t 
\eeq

\begin{eqnarray}\label{fv2}
& & -\int^{+\infty}_{0}\int_{\Omega}\frac{\psi(\tau)}{\Gamma(1-\alpha)\tau^\alpha}[{\bf S}_0]_{ij}({\bf x})[{\bf a}]_j({\bf x})\ud {\bf x} \ud \tau \nonumber\\[0.5cm]
& & -\int^{+\infty}_{0}\int^{+\infty}_{\tau}\int_{\Omega}\frac{\psi'(t)}{\Gamma(1-\alpha)\tau^\alpha}[{\bf S}]_{ij}(t-\tau,{\bf x})[{\bf a}]_j({\bf x})\ud {\bf x}\ud t\ud\tau \nonumber\\[0.5cm]
& & +\int^{+\infty}_{0}\int_{\Omega}[{\bf S}]_{ij}(t,{\bf x})[{\bf a}]_j({\bf x})\psi(t)\ud {\bf x} \ud t= \nonumber\\[0.5cm]
& & -\frac{1}{\Gamma(1-\beta)}\int^{+\infty}_{0}\int^{t}_{0}\int_{\Omega}\frac{\psi(t)}{(t-\tau)^\beta}\left\{\left(\nabla\cdot{\bf a}\right)[{\bf u}]_i+[{\bf u}\cdot\nabla{\bf a}]_i\right\}(\tau,{\bf x})\ud {\bf x}\ud\tau \ud t
\end{eqnarray}

Summation over repeated indices is understood in equations \eqref{fv1} and \eqref{fv2}  above.

We now detail the functional framework.  Let $V=\{{\bf h}\in \displaystyle\left(H^1_0(\Omega)\right)^3\, \text{s.t.}\, \nabla\cdot {\bf h}=0\}$ be the Hilbert space endowed with the inner product:

\beq\label{hils}
\left\langle {\bf f}|{\bf g} \right\rangle_V=\displaystyle\mathop{\sum}_{i,j=1}^{3}\int_{\Omega}\frac{\partial f_i}{\partial x_j}\frac{\overline{\partial g_i}}{\partial x_j}({\bf x})\ud {\bf x}
\eeq

and denote $\|\,\|_{V}$ the corresponding norm.

The closure of $V$ in $(L^2(\Omega))^3 $ is denoted by $H$, the later space being endowed with the inner product:

\beq\label{hsd}
\left\langle {\bf f}|{\bf g} \right\rangle_H=\displaystyle\mathop{\sum}_{i=1}^{3}\int_{\Omega}f_i\overline{g}_i({\bf x})\ud {\bf x}
\eeq

with $\|\,\|_{H}$ being the corresponding norm.  Let  $0<\lambda_1\leq\lambda_2\leq\dots\lambda_n\leq\dots  \displaystyle \mathop{\longrightarrow}_{n\rightarrow +\infty} +\infty $ and ${\bf w}_i\in V,\,i\in\mathbb{N}^\ast$, be the eigenvalues and the corresponding eigenfunctions   of the Stokes operator in $H$, i.e.:

\beq\label{eig1}
\forall \boldsymbol{\phi}\in V,\, \langle {\bf w}_k\,|\, \boldsymbol{\phi}\rangle_V=\lambda_k\langle {\bf w}_k\,|\, \boldsymbol{\phi}\rangle_H,\quad\text{where}\, \|{\bf w}_k\|_H=1
\eeq

%%%%%%%%%%%%%%%%%%%%%%%%%%%%%%%%%%%%%%%%%%%%%%%%%%%%%%%%%%%%%%%%
%%%%%%%%%%%%%%%%%%%%%%%%%%%%%%%%%%%%%%%%%%%%%%%%%%%%%%%%%%%%%%%
\section{The solution existence and uniqueness}\label{wf}
%%%%%%%%%%%%%%%%%%%%%%%%%%%%%%%%%%%%%%%%%%%%%%%%%%%%%%%%%%%%%%%
%%%%%%%%%%%%%%%%%%%%%%%%%%%%%%%%%%%%%%%%%%%%%%%%%%%%%%%%%%%%%%

To prove the solution uniqueness, we first eliminate ${\bf S}$ from equations \eqref{fv1} and \eqref{fv2}.

Denote $\mathscr{L}:=\{f\in L^1_{\text{loc}}(\mathbb{R}_+)\,\text{s.t.}\,\exists M>0,\,\text{so that}\, fe^{-Mt}\in L^1(\mathbb{R}_+) \} \subset L^1_{\text{loc}}(\mathbb{R}_+)  $.  Next, let $f\in L^1_{\text{loc},\mathbb{R}_+}(\mathbb{R})$.   For any $a\in \mathbb{R}$ and $\alpha\in ]0,1[$, define $D^\alpha _{t,a}f$ by:

\beq\label{u1}
\left\langle D^\alpha _{t,a}f,\varphi \right\rangle =\dfrac{1}{\Gamma(1-\alpha)}\left[ -a\int_0^{+\infty}\dfrac{\varphi(\tau)}{\tau^\alpha}\ud \tau-\int_0^{+\infty}f(t)\left(\int_0^{+\infty}\dfrac{\varphi'(t+\tau)}{\tau^\alpha}\ud \tau \right)\ud t  \right] 
\eeq

for any test function $ \varphi\in \mathscr{D}(\mathbb{R})$.

Observe that $D^\alpha _{t,a}f\in\mathscr{D}'(\mathbb{R})$.  Moreover, for any $ f\in \mathscr{L}$, one easily sees that $ x\geq M$, $e^{-xt}D^\alpha _{t,a}f\in\mathscr{S}'(\mathbb{R})$.

The hat $\widehat{(\,)}$ notation to be used below stands for the usual Laplace transform.

\begin{proposition}\label{up1}
Let $f\in \mathscr{L}$.  Then, for $\forall s\in \mathbb{C}$, with $\mathrm{Re}(s)$ large enough, one has $\widehat{D^\alpha _{t,0}f}(s)=s^\alpha\hat{f}(s)$.

\end{proposition}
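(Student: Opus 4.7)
The strategy is to compute $\widehat{D^\alpha_{t,0}f}(s)$ by applying the distributional definition \eqref{u1} (with $a=0$) to a test function approximating $\varphi(t)=e^{-st}$, then identifying the inner $\tau$-integral with a classical Gamma-function Laplace transform.

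First I would fix $s\in\mathbb{C}$ with $\mathrm{Re}(s)>M$, where $M$ is the growth constant associated to $f\in\mathscr{L}$. The remark preceding the proposition guarantees that $e^{-\mathrm{Re}(s)\,t}D^\alpha_{t,0}f\in\mathscr{S}'(\mathbb{R})$, so its Fourier--Laplace transform at the value $\mathrm{Im}(s)$, i.e.\ $\widehat{D^\alpha_{t,0}f}(s)$, is well defined. I would choose a cutoff sequence $\chi_n\in\mathscr{D}(\mathbb{R})$ with $\chi_n\equiv 1$ on $[-1,n]$, $\mathrm{supp}\,\chi_n\subset[-2,n+1]$, and uniformly bounded derivatives, and set $\varphi_n(t)=\chi_n(t)e^{-st}$. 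The decay of $e^{-\mathrm{Re}(s)t}D^\alpha_{t,0}f$ together with the convergence $\chi_n\to 1$ yield $\widehat{D^\alpha_{t,0}f}(s)=\lim_{n\to\infty}\langle D^\alpha_{t,0}f,\varphi_n\rangle$.

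Next, I would plug $\varphi_n$ into \eqref{u1}. Since $\varphi_n'(t+\tau)=-s\,\chi_n(t+\tau)e^{-s(t+\tau)}+\chi_n'(t+\tau)e^{-s(t+\tau)}$, the pairing splits into a principal term and a cutoff error. The error is supported on $t+\tau\in[n,n+1]$, and vanishes as $n\to\infty$ by dominated convergence thanks to $|f(t)|e^{-\mathrm{Re}(s)t}\in L^1(\mathbb{R}_+)$ and $\tau^{-\alpha}e^{-\mathrm{Re}(s)\tau}\in L^1(\mathbb{R}_+)$ (the latter using $\alpha<1$ at $\tau=0$ and $\mathrm{Re}(s)>0$ at infinity). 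These same bounds justify Fubini in the principal term, giving in the limit
\[
\widehat{D^\alpha_{t,0}f}(s)=\frac{s}{\Gamma(1-\alpha)}\int_0^{+\infty}f(t)e^{-st}\,\mathrm{d}t\cdot\int_0^{+\infty}\frac{e^{-s\tau}}{\tau^\alpha}\,\mathrm{d}\tau.
\]
Invoking the classical identity $\int_0^{+\infty}\tau^{-\alpha}e^{-s\tau}\,\mathrm{d}\tau=\Gamma(1-\alpha)\,s^{\alpha-1}$ (valid for $\mathrm{Re}(s)>0$ with the principal branch of $s^{\alpha-1}$, obtained by the change of variable $u=s\tau$ and a standard contour deformation) and recognizing the outer integral as $\hat{f}(s)$, one concludes $\widehat{D^\alpha_{t,0}f}(s)=s\cdot s^{\alpha-1}\hat{f}(s)=s^\alpha\hat{f}(s)$.

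The main obstacle is the passage from the pairing $\langle D^\alpha_{t,0}f,\varphi_n\rangle$ with compactly supported test functions to the Laplace transform evaluated at the non-compactly-supported $e^{-st}$; this requires the tempered-distribution framework noted just before the statement, together with the uniform control of the cutoff-error term. An alternative, shorter route I would keep in reserve is to verify the identity $D^\alpha_{t,0}f=\partial_t[\Phi_\alpha * f]$ in $\mathscr{D}'(\mathbb{R})$, with $\Phi_\alpha(t)=t_+^{-\alpha}/\Gamma(1-\alpha)$, and then apply the standard Laplace rules $\widehat{\Phi_\alpha}(s)=s^{\alpha-1}$ and $\widehat{\partial_t g}(s)=s\hat{g}(s)$ (valid here because $(\Phi_\alpha * f)(0^+)=0$, so the distributional derivative carries no Dirac mass at the origin).
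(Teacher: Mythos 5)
Your proposal is correct and follows essentially the same route as the paper: substitute $\varphi(t)=e^{-st}$ into the defining formula \eqref{u1} with $a=0$, apply Fubini, and invoke $\int_0^{+\infty}\tau^{-\alpha}e^{-s\tau}\,\mathrm{d}\tau=\Gamma(1-\alpha)s^{\alpha-1}$ to obtain $s^{\alpha}\hat{f}(s)$. The only difference is that the paper merely asserts that \eqref{u1} remains valid for the non-compactly-supported choice $\varphi(t)=e^{-st}$, whereas you justify this by a cutoff and dominated-convergence argument --- a more careful rendering of the same step rather than a different method.
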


\begin{proof}
Let $M>0$ such that $e^{-Mt} f\in L^1(\mathbb{R}_+)$.  For any $s\in\mathbb{C},\,\mathrm{Re}(s)\geq M$ and $\varphi(t)=e^{-st}$, Eq.\eqref{u1} - still valid for this particular choice of $\varphi(t)=e^{-st}\notin \mathscr{D}(\mathbb{R})$ - gives:

\beq\label{up1 1}
\left\langle D^\alpha _{t,0}f,e^{-st} \right\rangle =-\dfrac{1}{\Gamma(1-\alpha)} \int_0^{+\infty}f(t)\left(\int_0^{+\infty}\dfrac{-se^{s+\tau}}{\tau^\alpha}\ud \tau \right)\ud t  
\eeq

As $\displaystyle\int_0^{+\infty}\dfrac{e^{-s\tau}}{\tau^\alpha}\ud \tau=\dfrac{\Gamma(1-\alpha)}{s^{1-\alpha}}$, for $\mathrm{Re}(s)\geq M>0$ one gets:

\beq\label{up1 2}
\widehat{D^\alpha _{t,0}f}(s)=\left\langle D^\alpha _{t,0}f,e^{-st} \right\rangle =-\dfrac{s}{\Gamma(1-\alpha)} \int_0^{+\infty}e^{-st}f(t)\Gamma(1-\alpha)s^{\alpha-1}\ud t=s^\alpha\hat{f}(s) 
\eeq
\end{proof}

The following classical result (see \cite{cm}) is stated here within our functional framework.  Recall first that (see also equations 14 and 15 in \cite{pal7}):

\beq\label{wf1}
W_0(t)=\dfrac{\sin(\alpha\pi)}{\pi} \displaystyle\int_0^{+\infty}\dfrac{e^{-rt}r^{\alpha-1}}{r^{2\alpha}+2r^{\alpha}\cos(\alpha\pi)+1}\ud r,\, t\geq 0
\eeq

\beq\label{wf2}
E_\alpha(t)=\dfrac{\sin(\alpha\pi)}{\pi}\displaystyle\int_0^{\infty}\dfrac{r^\alpha e^{-rt}}{r^{2\alpha}+2r^\alpha \cos(\alpha\pi)+1}\ud r, \, t> 0
\eeq

\begin{proposition}\label{up2}
Let $F\in \mathscr{L}$.  Then, for any $ a\in \mathbb{R}$ and any $ \alpha\in ]0,1[$, the equation

\beq\label{up2 1} 
D^\alpha_{t,a}f+f=F
\eeq

has a unique solution $f\in \mathscr{L}$, given by $f=E_\alpha\displaystyle \ast F+aW_0$. 

\end{proposition}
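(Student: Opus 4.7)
My plan is to reduce \eqref{up2 1} to an algebraic identity by applying the Laplace transform, solve it, and recognize the result as the Laplace transform of $E_\alpha\ast F + aW_0$. Uniqueness then follows from injectivity of the Laplace transform on $\mathscr{L}$, while existence amounts to verifying that the candidate solution lies in $\mathscr{L}$ and satisfies the equation.

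First I would extend Proposition \ref{up1} to incorporate the constant $a$. Applying \eqref{u1} formally to $\varphi(t)=e^{-st}$ (legitimate for $\mathrm{Re}(s)$ large enough, exactly as in the proof of Proposition \ref{up1}) and using $\int_0^{+\infty} e^{-s\tau}\tau^{-\alpha}\,\ud\tau = \Gamma(1-\alpha)/s^{1-\alpha}$, the extra ``initial-data'' term in \eqref{u1} contributes $-as^{\alpha-1}$, yielding
\beq
\widehat{D^\alpha_{t,a}f}(s) \;=\; s^\alpha \hat{f}(s) - a\,s^{\alpha-1}.
\eeq
Next I would record the classical Laplace identities (obtained by inverting along a Bromwich contour deformed around the branch cut of $s^\alpha$ on $\mathbb{R}_-$, as in \cite{cm})
\beq
\widehat{E_\alpha}(s) = \frac{1}{1+s^\alpha}, \qquad \widehat{W_0}(s) = \frac{s^{\alpha-1}}{1+s^\alpha}, \qquad \mathrm{Re}(s)>0,
\eeq
which are precisely what motivate the integral representations \eqref{wf1}--\eqref{wf2}.

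For existence, set $f := E_\alpha\ast F + aW_0$. A rescaling $r\mapsto r/t$ in \eqref{wf1}--\eqref{wf2} shows $E_\alpha(t)=O(t^{\alpha-1})$ as $t\to 0^+$ with polynomial decay at infinity, while $W_0$ is bounded (with $W_0(0)=1$); hence $E_\alpha,W_0\in\mathscr{L}$ and, by Young's inequality in an exponentially weighted $L^1$ space, $f\in\mathscr{L}$. Taking Laplace transforms and using the convolution theorem,
\beq
\hat{f}(s) \;=\; \frac{\hat{F}(s)}{1+s^\alpha} + \frac{a\,s^{\alpha-1}}{1+s^\alpha},
\eeq
which rearranges to $\widehat{D^\alpha_{t,a}f}(s) + \hat{f}(s) = \hat{F}(s)$ on a right half-plane; injectivity of the Laplace transform on $\mathscr{L}$ then yields \eqref{up2 1}. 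For uniqueness, if $f_1,f_2\in\mathscr{L}$ both solve \eqref{up2 1}, then $g := f_1-f_2$ satisfies $D^\alpha_{t,0}g + g = 0$ (the $a$-contributions cancel), whose Laplace transform reads $(1+s^\alpha)\hat{g}(s) = 0$ on a right half-plane; since $1+s^\alpha$ has no zero there, $\hat{g}\equiv 0$, hence $g=0$.

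The main obstacle I anticipate is not the algebraic manipulation but justifying the Laplace-transform identities rigorously within this distributional framework: namely, confirming that $D^\alpha_{t,a}f$, which a priori is only an element of $\mathscr{D}'(\mathbb{R})$, admits the Laplace transform computed above for $\mathrm{Re}(s)$ large, and that the contour-inversion formulas \eqref{wf1}--\eqref{wf2} genuinely produce $\mathscr{L}$ functions with the advertised Laplace transforms. Once these two points are secured (by combining Proposition \ref{up1} with \cite{cm}), the remainder of the proof reduces to a one-line algebraic check.
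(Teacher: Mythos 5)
Your proof is correct, and its uniqueness half coincides with the paper's: Laplace-transform the homogeneous equation, observe $(1+s^\alpha)\widehat{f-g}=0$ on a right half-plane, and invoke injectivity. The existence half, however, takes a genuinely different route. The paper does not verify the equation by transform calculus: it first cites \cite{cm} for the fact that $f=E_\alpha\ast F+aW_0$ solves \eqref{up2 1} when $F\in\mathscr{C}^1(\mathbb{R}_+)$, then approximates a general $F\in\mathscr{L}$ by smooth $F_n$ converging in $L^1_{\text{loc}}(\mathbb{R}_+)$, and passes to the limit in the distributional equations $D^\alpha_{t,a}f_n+f_n=F_n$ using only $E_\alpha\in L^1_{\text{loc}}(\mathbb{R}_+)$; the membership $f\in\mathscr{L}$ is then checked essentially as you do (boundedness of $W_0$ plus the exponentially weighted $L^1$ convolution estimate). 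Your route instead verifies the equation directly from the identities $\widehat{E_\alpha}(s)=1/(1+s^\alpha)$, $\widehat{W_0}(s)=s^{\alpha-1}/(1+s^\alpha)$ and the formula $\widehat{D^\alpha_{t,a}f}(s)=s^\alpha\hat f(s)-a s^{\alpha-1}$, which is a correct extension of Proposition \ref{up1}. This buys a self-contained algebraic verification and avoids justifying the limit $D^\alpha_{t,a}f_n\to D^\alpha_{t,a}f$; the price is that your final step needs injectivity of the Laplace transform not merely on $\mathscr{L}$ but on the class of distributions $T$ with $e^{-xt}T\in\mathscr{S}'(\mathbb{R})$ for $x$ large, since $D^\alpha_{t,a}f+f-F$ is a priori only an element of that class, not a function in $\mathscr{L}$. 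That fact is standard (the transform of such a $T$ is holomorphic on a half-plane and determines $T$ uniquely), and you flag the point yourself as the main thing to secure, so it is not a gap --- but it should be invoked in that distributional form rather than as injectivity ``on $\mathscr{L}$''.
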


\begin{proof}

\textit{Existence}:

Assume $F\in \mathscr{C}^1(\mathbb{R}_+)$.  Then, $f=E_\alpha\displaystyle \ast F+aW_0$ is a solution of Eq.\eqref{up2 1} (cf \cite{cm}).  Now, if one assumes that $F\in \mathscr{L}$, then there exists $  (F_n)_{n\in \mathbb{N}^\ast}$, $F_n\in \mathscr{C}^1(\mathbb{R}_+)$ such that $F_n \displaystyle\mathop{\longrightarrow}_{n\to+\infty} F $ in $L^1_{\text{loc}}(\mathbb{R}_+)$.  Since $E_\alpha\in L^1_{\text{loc}}(\mathbb{R}_+)$, then $f_n=E_\alpha\displaystyle \ast F_n+aW_0 \displaystyle\mathop{\longrightarrow}_{n\to+\infty} E_\alpha\displaystyle\mathop{\ast}_{(t)}F+aW_0 $ in $L^1_{\text{loc}}(\mathbb{R}_+)$.  Hence the equation $D^\alpha_{t,a}f_n+f_n=F_n$, for $n\to+\infty$, becomes $D^\alpha_{t,a}f+f=F$.   

We must next show that $f\in \mathscr{L}$.  Notice first that $\|aW_0\|_{\infty}\leq |a|W_0(t)$.  Therefore $aW_0\in\mathscr{L}$.   Denote $g=e^{-Mt}F$; choose $M>0$ so that $e^{-Mt}F\in L^1(\mathbb{R})$.  Then $\left|E_\alpha\displaystyle\mathop{\ast}_{(t)}F \right|=\left|\displaystyle \int _0^t E_\alpha(t-s)e^{Ms}g(s)\ud s \right|\leq e^{Mt} \left[ E_\alpha\displaystyle\mathop{\ast}_{(t)}g \right]$.  Since $g\in L^1(\mathbb{R}_+)$, and $E_\alpha\in L^1(\mathbb{R}_+)$, then $E_\alpha\displaystyle \ast g\in L^1(\mathbb{R}_+)$.  Finally $E_\alpha\displaystyle \ast F\in \mathscr{L}$ and $f=E_\alpha\displaystyle \ast F+aW_0\in \mathscr{L}$.  

\textit{Uniqueness}:

Let $f,g\in \mathscr{L}$ be two solutions of Eq.\eqref{up2 1}.  Then $D_{t,0}^\alpha (f-g)+(f-g)=0$, from which it follows that $(s^\alpha+1)\widehat{f-g}(s)=0$, for $Re(s)$ large enough.  Therefore $\widehat{f-g}(s)=0$, thus $f=g$. 

\end{proof}

We shall use the following result to prove the uniqueness property:

\begin{lemma}\label{ul1}
For any $g\in\mathscr{L}$, $I_t^{1-\beta}g\in\mathscr{L}$. 
\end{lemma}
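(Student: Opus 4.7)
The plan is to view $I_t^{1-\beta}$ as convolution with the kernel $k(t) = t^{-\beta}/\Gamma(1-\beta)$, and to check the two defining properties of $\mathscr{L}$ separately: local integrability on $\mathbb{R}_+$, and integrability after multiplication by a suitable decaying exponential.

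First I would note that for $g \in \mathscr{L}$, we trivially have $g \in L^1_{\text{loc}}(\mathbb{R}_+)$, and the kernel $k(t) = t^{-\beta}/\Gamma(1-\beta)$ is locally integrable on $\mathbb{R}_+$ because $\beta < 1$. A standard Fubini/Young argument on any compact interval $[0,T]$ then gives $I_t^{1-\beta} g = k * g \in L^1([0,T])$, so $I_t^{1-\beta} g \in L^1_{\text{loc}}(\mathbb{R}_+)$.

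The main point is the exponential-weighted integrability. Pick $M > 0$ such that $e^{-Mt} g \in L^1(\mathbb{R}_+)$. The key observation is the identity, valid for Volterra-type convolutions on $\mathbb{R}_+$,
\[
e^{-Mt}(k * g)(t) \;=\; \bigl( (e^{-M\cdot} k) * (e^{-M\cdot} g) \bigr)(t),
\]
which follows from the factorization $e^{-Mt} = e^{-M(t-\tau)} e^{-M\tau}$ inside the integral $\int_0^t k(t-\tau)g(\tau)\,\ud \tau$. The weighted kernel $e^{-Mt} k(t) = e^{-Mt} t^{-\beta}/\Gamma(1-\beta)$ lies in $L^1(\mathbb{R}_+)$: near $0$ it is integrable since $\beta < 1$, and near infinity the exponential decay kills the algebraic singularity. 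Both factors are therefore in $L^1(\mathbb{R}_+)$, and Young's convolution inequality yields $e^{-Mt} I_t^{1-\beta} g \in L^1(\mathbb{R}_+)$. Combined with the local integrability, this gives $I_t^{1-\beta} g \in \mathscr{L}$ with the same exponential growth bound $M$ as $g$.

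There is no real obstacle here; the only thing to be a little careful about is justifying the convolution factorization and Fubini, but this is immediate once one restricts to a compact interval or uses the exponential weight to control things globally. In particular, no appeal to Laplace transforms is needed, although one could alternatively verify the claim by Laplace transforming and noting that $\widehat{I_t^{1-\beta}g}(s) = s^{\beta-1}\hat g(s)$ is analytic in a half-plane $\operatorname{Re}(s) > M$, which also yields membership in $\mathscr{L}$.
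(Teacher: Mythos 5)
Your proof is correct and follows essentially the same route as the paper: both factor the exponential weight through the Volterra convolution, observe that $e^{-Mt}t^{-\beta}$ and $e^{-Mt}g$ both lie in $L^1(\mathbb{R}_+)$, and conclude by Young's inequality. Your write-up is in fact slightly cleaner than the paper's (which contains a sign slip in the intermediate exponentials), and the explicit check of local integrability, while subsumed by the weighted estimate, does no harm.
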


\begin{proof}
Since $g\in\mathscr{L}$, there exists $G\in L^1(\mathbb{R}_+)$ and $M>0$ such that  $g=Ge^{-Mt}$.  Therefore, for $t\geq0$ a.e., $\left|I_t^{1-\beta}g(t) \right|\leq K \left|\displaystyle\int_0^t \dfrac{g(t-u)}{u^\beta}\ud u  \right| \leq K \displaystyle\int_0^t |G(t-u)|\dfrac{e^{-M(t-u)}}{u^\beta}\ud u \leq K e^{-Mt}\left(|G|\displaystyle \ast u_\beta e^{-Mt} \right) $.  Now, $G \in L^1(\mathbb{R}_+)$,  $u_\beta e^{-Mt}\in L^1(\mathbb{R}_+)$ leads to $\left(|G|\displaystyle \ast u_\beta e^{-Mt} \right)\in L^1(\mathbb{R}_+)$.  Therefore, $\left|I_t^{1-\beta}g(t) \right|\leq e^{Mt}H(t)$, with $H\in L^1(\mathbb{R}_+)$, which gives $I_t^{1-\beta}g\in\mathscr{L}$.

\end{proof}

Making use of Proposition \ref{up2} and of Lemma \ref{ul1}, we get:

\begin{corollary}[\textbf{Solution uniqueness}]\label{uc1}
Let ${\bf u}_0\in H$ and ${\bf S}_0\in L^2(\Omega)^9$.  The system of equations Eqs.\eqref{fv1}-\eqref{fv2} has at most one solution that belongs to the functional space $ \mathscr{F}:=\{ ({\bf u},{\bf S})  \in \left[ \mathscr{C}^0  (\mathbb{R}_+,H)\cap L^1_{\text{loc}}(\mathbb{R}_+,V)  \right] \times \mathscr{C}^0\left(]0,+\infty[,L^2(\Omega)^9  \right)$,  such that $\|\nabla{\bf u}\|_{L^2(\Omega)^9}\in \mathscr{L},\,\| {\bf S}\|_{L^2(\Omega)^9}\in \mathscr{L}    \} $.  

\end{corollary}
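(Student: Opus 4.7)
The plan is to reduce uniqueness to a scalar algebraic identity in Laplace space for each Stokes modal coefficient of $\mathbf{u}$. By linearity, let $(\mathbf{u},\mathbf{S})\in\mathscr{F}$ denote the difference of two presumed solutions, so that \eqref{fv1}--\eqref{fv2} hold with $\mathbf{u}_0=0$ and $\mathbf{S}_0=0$; it suffices to prove $(\mathbf{u},\mathbf{S})\equiv 0$.

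First, I would reinterpret \eqref{fv2} as a family of scalar weak fractional ODEs in time. For every $\mathbf{a}\in\mathscr{D}(\Omega)^3$ and each index $i$, an integration by parts in $x$ (using $\nabla\cdot\mathbf{u}=0$) recasts the right-hand side as $I^{1-\beta}_t\!\int_\Omega A_{1,ij}(\cdot,x)a_j(x)\,\mathrm{d}x$, while the left-hand side is exactly $\langle D^\alpha_{t,0}(\int_\Omega S_{ij}(\cdot,x)a_j(x)\,\mathrm{d}x) + \int_\Omega S_{ij}(\cdot,x)a_j(x)\,\mathrm{d}x,\psi\rangle$ in the sense of \eqref{u1} (the ``initial-data constant'' in \eqref{u1} vanishes because $\mathbf{S}_0=0$). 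Since $\|\nabla\mathbf{u}\|_{L^2}\in\mathscr{L}$ by the definition of $\mathscr{F}$, Lemma \ref{ul1} places the source in $\mathscr{L}$, and Proposition \ref{up2} produces
$$\int_\Omega S_{ij}(t,x)\,a_j(x)\,\mathrm{d}x = \Bigl(E_\alpha \ast I^{1-\beta}_t \!\int_\Omega A_{1,ij}(\cdot,x)a_j(x)\,\mathrm{d}x\Bigr)(t) \in \mathscr{L}.$$
Taking Laplace transforms (Proposition \ref{up1} together with $\widehat{I^{1-\beta}_t f}=s^{\beta-1}\widehat f$) and then invoking density of $\mathscr{D}(\Omega)^3$ in $L^2(\Omega)^3$, I upgrade this to the $L^2(\Omega)^9$ identity $\widehat{\mathbf{S}}(s,\cdot)=\dfrac{s^{\beta-1}}{1+s^\alpha}\widehat{\mathbf{A}}_1(s,\cdot)$ for every $s$ with $\mathrm{Re}(s)$ large enough.

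Second, I would Laplace-transform \eqref{fv1} by substituting $\psi(t)=e^{-st}$. Since $e^{-st}\notin\mathscr{C}^\infty_{00}$, this substitution is justified by an exhaustion $\psi_R(t)=\chi_R(t)e^{-st}$ with $\chi_R\in\mathscr{C}^\infty_{00}$ approaching the constant $1$, passing to $R\to+\infty$ using the exponential integrability encoded in $\|\mathbf{u}\|_V,\|\mathbf{S}\|_{L^2}\in\mathscr{L}$. Choosing $\boldsymbol\theta=\mathbf{w}_k$ and letting $\hat u_k(s):=\langle\widehat{\mathbf{u}}(s),\mathbf{w}_k\rangle_H$, the initial condition $\mathbf{u}_0=0$ yields $s\,\hat u_k(s)=-\int_\Omega\widehat{\mathbf{S}}(s,x):\nabla\mathbf{w}_k(x)\,\mathrm{d}x$. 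Substituting the first-step identity and using that $\nabla\cdot\mathbf{w}_k=0$ (a short integration by parts gives $\int_\Omega \mathbf{A}_1:\nabla\mathbf{w}_k\,\mathrm{d}x=\langle\mathbf{u},\mathbf{w}_k\rangle_V=\lambda_k\langle\mathbf{u},\mathbf{w}_k\rangle_H$ via \eqref{eig1}), I arrive at
$$\hat u_k(s)\,\bigl[\,s(1+s^\alpha)+\lambda_k s^{\beta-1}\,\bigr]=0.$$
For $s>0$ real and sufficiently large the bracket is strictly positive, so $\hat u_k\equiv 0$ on a right half-plane. Injectivity of the Laplace transform on $\mathscr{L}$ then forces $u_k\equiv 0$ for every $k\in\mathbb{N}^*$; totality of $\{\mathbf{w}_k\}$ in $H$ gives $\mathbf{u}\equiv 0$ in $\mathscr{C}^0(\mathbb{R}_+,H)$, and the first-step reconstruction then yields $\mathbf{S}\equiv 0$.

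The hardest point is likely the approximation step needed to legitimately substitute $\psi(t)=e^{-st}$ into the weak formulation: one must show that the truncated integrals obtained with $\chi_R(t)e^{-st}$ genuinely converge to the stated Laplace-transformed identities as $R\to +\infty$, and this convergence relies precisely on the $\mathscr{L}$-membership conditions baked into the functional class $\mathscr{F}$. The spectral projection onto the Stokes eigenfunctions and the $\mathscr{D}(\Omega)^3$-to-$L^2(\Omega)^3$ density argument are routine by comparison.
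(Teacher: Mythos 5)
Your argument is correct, and it diverges from the paper's proof in its second half. The first half is essentially the paper's: both of you test \eqref{fv2} against spatial test functions, recognize the left-hand side as $D^\alpha_{t,a}$ applied to a scalar function of $t$ lying in $\mathscr{L}$, and invoke Lemma \ref{ul1} and Proposition \ref{up2} to solve for $\left\langle{\bf S}|\bs{\varphi}\right\rangle_{L^2(\Omega)^9}$; the paper keeps the time-domain form $\rho\ast\left\langle{\bf A}_1|\bs{\varphi}\right\rangle_{L^2(\Omega)^9}+\left\langle{\bf S}_0|\bs{\varphi}\right\rangle_{L^2(\Omega)^9}W_0$, which is precisely the inverse Laplace transform of your multiplier $s^{\beta-1}/(1+s^\alpha)$. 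You then part ways on \eqref{fv1}. The paper stays in the time domain: it substitutes the representation of ${\bf S}$ into \eqref{fv1}, projects onto ${\bf w}_k$ using only $\psi\in\mathscr{D}(]0,+\infty[)$, arrives at the integro-differential Cauchy problem \eqref{al:a1}--\eqref{al:a2} for $\alpha_k$, and concludes by citing the uniqueness of its $\mathscr{C}^1(\mathbb{R}_+)$ solution from \cite{pal7}. You instead Laplace-transform \eqref{fv1} as well, use $\int_\Omega{\bf A}_1:\nabla{\bf w}_k\,\ud{\bf x}=\lambda_k\langle{\bf u}|{\bf w}_k\rangle_H$ (correct: the $\nabla{\bf u}^T$ contribution dies against $\nabla\cdot{\bf w}_k=0$), and reduce everything to $\hat u_k(s)\bigl[s(1+s^\alpha)+\lambda_k s^{\beta-1}\bigr]=0$, concluding by positivity of the symbol on the real axis and injectivity of the Laplace transform on $\mathscr{L}$. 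What your route buys is self-containment — no appeal to the external uniqueness theorem for the Cauchy problem — at the price of rigorously justifying the substitution $\psi(t)=e^{-st}$ in \eqref{fv1} by truncation, which is exactly where the $\mathscr{L}$-hypotheses built into $\mathscr{F}$ are consumed; the paper's route keeps compactly supported test functions in \eqref{fv1} and transfers that analytic burden to the cited result of \cite{pal7}. Both are legitimate proofs of the corollary.
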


\begin{proof}
Let $({\bf u},{\bf S})\in \mathscr{F}$ be a solution to Eqs.\eqref{fv1}-\eqref{fv2}.  For any test function $\bs{\varphi}\in \mathscr{D}(\Omega)^9$, as a consequence of Eq.\eqref{fv2} and of the fact that ${\bf u}\in L^1_{\text{loc}}(\mathbb{R}_+,V)$, one has

\beq\label{uc1 1}
D^\alpha_{t,\left\langle{\bf S}_0|\bs{\varphi}  \right\rangle_{L^2(\Omega)^9}}\left\langle{\bf S}|\bs{\varphi}  \right\rangle_{L^2(\Omega)^9}+ \left\langle{\bf S}|\bs{\varphi}  \right\rangle_{L^2(\Omega)^9}=I^{1-\beta}_t\left( \left\langle{\bf A}_1|\bs{\varphi}  \right\rangle_{L^2(\Omega)^9} \right) 
\eeq

However, $\left| \left\langle{\bf S}|\bs{\varphi}  \right\rangle_{L^2(\Omega)^9} \right|\leq \|{\bf S}\|_{L^2(\Omega)^9} \|\bs{\varphi}\|_{L^2(\Omega)^9} $. Since $\|{\bf S}\|_{L^2(\Omega)^9}\in \mathscr{L}$ and $\|\nabla{\bf u}\|_{L^2(\Omega)^9}\in \mathscr{L}$,  we infer that $\left\langle{\bf S}|\bs{\varphi}  \right\rangle_{L^2(\Omega)^9}\in \mathscr{L}$, and $\left\langle{\bf A}_1|\bs{\varphi}  \right\rangle_{L^2(\Omega)^9}\in \mathscr{L}$.  Now Lemma \ref{ul1} implies $I_t^{1-\beta}\left\langle {\bf A}_1|\bs{\varphi} \right\rangle_{L^2(\Omega)^9}\in\mathscr{L}$, and  Proposition \ref{up2} leads to 

\begin{eqnarray}\label{uc1 2}
\left\langle {\bf S}|\bs{\varphi} \right\rangle_{L^2(\Omega)^9} & = & E_\alpha \ast  I^{1-\beta}_t\left( \left\langle{\bf A}_1|\bs{\varphi}  \right\rangle_{L^2(\Omega)^9} \right)+\left\langle {\bf S}_0|\bs{\varphi} \right\rangle_{L^2(\Omega)^9}W_0 \nonumber\\
& = & \rho \ast  \left\langle{\bf A}_1|\bs{\varphi}  \right\rangle_{L^2(\Omega)^9}  + \left\langle {\bf S}_0|\bs{\varphi} \right\rangle_{L^2(\Omega)^9}W_0 
\end{eqnarray}

Notice that Eq.\eqref{uc1 2} still holds true for $\bs{\varphi} \in L^2(\Omega)^9$.

Let $ {\bs \theta}\in V$, $\psi\in \mathcal{C}_{00}^{+\infty}([0,+\infty[) $.  
We deduce from \eqref{uc1 2} and \eqref{fv1} that:

\begin{eqnarray}
& & \psi(0)\int_{\Omega} {\bf u}_0({\bf x})\cdot {\bs \theta}({\bf x})\ud{\bf x}+ \int_0^{+\infty}\int_{\Omega} {\bf u}(t,{\bf x}) \cdot {\bs \theta}({\bf x})\psi'(t)\ud{\bf x}\ud t \nonumber \\
& & =-\int_0^{+\infty} \left( \rho \ast  \left\langle{\bf A}_1|\nabla{\bs \theta}  \right\rangle_{L^2(\Omega)^9}  + \left\langle {\bf S}_0|\nabla{\bs \theta} \right\rangle_{L^2(\Omega)^9}W_0 \right)(t)\psi(t) \ud t \label{uc1 3}
\end{eqnarray}

We search for ${\bf u}\in L^1_{\text{loc}}(\mathbb{R}_+,V)$.  In this case, for almost every $t>0$, ${\bf u}$ can be expressed as

\beq\label{wf3}
{\bf u}(t)=\displaystyle \sum_{q=1}^{+\infty}\alpha_q(t){\bf w}_q
\eeq

the series being convergent in $V$.  It follows, by taking ${\bs \theta}={\bf w}_k$ and $\psi\in\mathscr{D}(]0,+\infty[)$ in equation \eqref{uc1 3}, that $\alpha'_k= -\lambda_k \left(\rho \ast \alpha_k \right)-b_k\sqrt{\lambda_k}W_0$, with $b_k:=\displaystyle \int_{\Omega}\left( {\bf S}_0:\nabla{\bf w}_k\right)({\bf x})\ud {\bf x}$, the equality holding true in $ \mathscr{D}'(]0,+\infty[)$.  Recall that $\alpha_k=\left\langle {\bf u}|{\bf w}_k \right\rangle_H \in \mathscr{C}^0(\mathbb{R}_+)$.  As $W_0\in \mathscr{C}^0(\mathbb{R}_+)$, then necessarily $\alpha_k\in  \mathscr{C}^1(\mathbb{R}_+)$.  However, (cf \cite{pal7}) the Cauchy's initial value problem 

\begin{subeqnarray}\slabel{al:a}
\alpha'_k(t) & = & -\lambda_k \left( \rho \ast \alpha_k \right)(t) -  b_k \sqrt{\lambda_k} W_0(t)  \slabel{al:a1}\\
\alpha_k(0) & = & \alpha_k^0 \slabel{al:a2}
\end{subeqnarray}

has a unique solution in $\mathscr{C}^1(\mathbb{R}_+)$.  The uniqueness of the solution ${\bf u}$ is thus proved, and that of ${\bf S}$ follows. 
 
\end{proof}

We now state an existence and uniqueness result.  Denote $\mathscr{C}^0_b(\mathbb{R}_+^\ast,V):=\{ {\bf u} \in \mathscr{C}^0 (\mathbb{R}_+^\ast,V) \, \text{s.t.} \, \displaystyle\mathop{\sup}_{t\geq1}\|{\bf u}(t)\|_V < +\infty \}$.  The functional space $\mathscr{C}^0_b(\mathbb{R}_+^\ast,L^2(\Omega)^9)$ is defined in a similar way.

%%%%%%%%%%%%%%%%%%%%%%%%%%%%%%%%%%%%%%%%%%%%%%
\begin{theorem}[\textbf{First Existence and Uniqueness Theorem}]\label{ut1}

Let ${\bf u}_0 \in H$, ${\bf S}_0 \in L^2(\Omega)^9$.  Then the boundary value problem given by the system of equations \eqref{fv1}-\eqref{fv2} has a unique solution 

\beq\label{ut11}
({\bf u},{\bf S}) \in \left[ \mathscr{C}^0 (\mathbb{R}_+,H)\cap\mathscr{C}^0_b (\mathbb{R}_+^\ast,V)\cap L^1_{\text{loc}}(\mathbb{R}_+ ,V) \right] \times  \left[ \mathscr{C}^0_b  (\mathbb{R}_+^\ast,L^2(\Omega)^9)\cap L^1_{\text{loc}}(\mathbb{R}_+ ,L^2(\Omega)^9) \right]
\eeq

Moreover, ${\bf u}(0)={\bf u}_0$.  
\end{theorem}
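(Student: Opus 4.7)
The plan is to combine the existence result from \cite{pal7} with Corollary~\ref{uc1} for uniqueness, and then to upgrade the solution regularity so as to match the class \eqref{ut11}. Uniqueness is almost immediate: the class \eqref{ut11} is contained in the uniqueness class $\mathscr{F}$ of Corollary~\ref{uc1}, because $\mathscr{C}^0_b(\mathbb{R}_+^\ast,V)\cap L^1_{\text{loc}}(\mathbb{R}_+,V)$ forces $\|\nabla\mathbf{u}\|_{L^2(\Omega)^9}$ to be locally integrable near $0$ and bounded on $[1,+\infty[$, hence to lie in $\mathscr{L}$, and the same applies to $\|\mathbf{S}\|_{L^2(\Omega)^9}$. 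So once existence in the class \eqref{ut11} is known, Corollary~\ref{uc1} immediately yields uniqueness.

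For existence, I would start from the weak solution produced in \cite{pal7} and work with the modal expansion $\mathbf{u}(t)=\sum_q\alpha_q(t)\mathbf{w}_q$ used in the proof of Corollary~\ref{uc1}, where each $\alpha_k$ satisfies the scalar Cauchy--Volterra problem with source $-b_k\sqrt{\lambda_k}W_0$ and memory kernel $\lambda_k\rho$. A Laplace-transform analysis of the associated scalar resolvent, combined with the integral representations \eqref{wf1}--\eqref{wf2} which provide the algebraic decay of $W_0$ and $E_\alpha$ at infinity, should produce mode-wise bounds on $\alpha_k(t)$ that are summable against $\lambda_k|\alpha_k^0|^2+|b_k|^2$; this would give the $\mathscr{C}^0_b(\mathbb{R}_+^\ast,V)$ control of $\mathbf{u}$. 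The matching bound on $\mathbf{S}$ is read off from the representation \eqref{uc1 2}, which writes $\mathbf{S}$ as the sum of a convolution of $\nabla\mathbf{u}$ with an $L^1$ kernel and a bounded-on-$[1,+\infty[$ term $\langle\mathbf{S}_0|\cdot\rangle_{L^2(\Omega)^9}W_0$. Continuity at $t=0$ in $H$ and the identity $\mathbf{u}(0)=\mathbf{u}_0$ would be obtained by testing \eqref{fv1} against $\bs{\theta}=\mathbf{w}_k$ with a family of cut-off functions $\psi_n\in\mathscr{C}^\infty_{00}([0,+\infty[)$ concentrating at $t=0$, using the continuity of each $\alpha_k$ on $\mathbb{R}_+$ (which follows from Proposition~\ref{up2}), and concluding by totality of $(\mathbf{w}_k)_k$ in $H$.

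The main obstacle will be the uniform-in-$k$ Laplace-transform analysis underlying the $\mathscr{C}^0_b$ bound for $\mathbf{u}$ in $V$. Because the Stokes eigenvalues $\lambda_k$ diverge and $W_0$ decays only algebraically, naive Gronwall estimates on the scalar Volterra equation produce blow-ups exponential in $t$ and polynomial in $\lambda_k$, which are far too weak to survive summation over $k$. The correct estimate must exploit cancellation in the Laplace-transform symbol $s+\lambda_k\hat\rho(s)$, and obtaining a $k$-independent decay rate on $[1,+\infty[$ is the technical heart of the argument. Once such an estimate is in place, the remaining steps --- membership in $\mathscr{F}$, continuity at the origin, and the identification $\mathbf{u}(0)=\mathbf{u}_0$ --- are routine.
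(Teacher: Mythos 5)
Your overall route coincides with the paper's: existence of a solution in the weaker class is imported from Theorem 8.4 of \cite{pal7}, the upgrade to $\mathscr{C}^0_b(\mathbb{R}_+^\ast,V)$ comes from mode-wise decay estimates on $\lambda_k|\alpha_k(t)|^2$ (the paper simply quotes equation 125 of \cite{pal7}, which is exactly the uniform-in-$k$ Laplace-contour estimate you identify as the technical heart), and uniqueness follows from the inclusion of the class \eqref{ut11} in $\mathscr{F}$ together with Corollary \ref{uc1}. Your verification that $\mathscr{C}^0_b(\mathbb{R}_+^\ast,V)\cap L^1_{\text{loc}}(\mathbb{R}_+,V)$ forces $\|\nabla{\bf u}\|_{L^2(\Omega)^9}\in\mathscr{L}$ is the right justification for the inclusion the paper asserts without comment. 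Also, the cut-off argument for ${\bf u}(0)={\bf u}_0$ is redundant: membership in $\mathscr{C}^0(\mathbb{R}_+,H)$ with ${\bf u}(0)={\bf u}_0$ is already part of what Theorem 8.4 of \cite{pal7} delivers.

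There is, however, one step that would fail as written: your treatment of $\sup_{t\geq1}\|{\bf S}(t)\|_{L^2(\Omega)^9}$. You propose to read the bound off \eqref{uc1 2} as ``a convolution of $\nabla{\bf u}$ with an $L^1$ kernel,'' but the kernel there is $\rho=E_\alpha\ast\frac{t^{-\beta}}{\Gamma(1-\beta)}$, whose Laplace transform $s^{\beta-1}/(s^\alpha+1)$ blows up as $s\to0$; correspondingly $\rho(t)$ decays only like a negative power of exponent less than $1$ at infinity, so $\rho\notin L^1(\mathbb{R}_+)$ (it is only $L^1_{\text{loc}}$). Moreover $\|\nabla{\bf u}(t)\|_{L^2}$ is not bounded near $t=0$ for general ${\bf u}_0\in H$, so a Young-type inequality is unavailable on either factor. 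The paper avoids this by bounding each mode $\lambda_k|\rho\ast\alpha_k|^2(t)$ directly (equation 130 of \cite{pal7}), i.e.\ by exploiting the decay of $\alpha_k$ jointly with the kernel inside the Laplace symbol rather than separating the convolution; some such mode-wise (or otherwise compensated) estimate is genuinely needed to close the $\mathscr{C}^0_b(\mathbb{R}_+^\ast,L^2(\Omega)^9)$ claim for ${\bf S}$.
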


\begin{proof}
The existence of at least one solution

$$({\bf u},{\bf S}) \in \left[ \mathscr{C}^0 (\mathbb{R}_+,H)\cap\mathscr{C}^0  (\mathbb{R}_+^\ast,V)\cap L^1_{\text{loc}}(\mathbb{R}_+ ,V) \right] \times \left[ \mathscr{C}^0  (\mathbb{R}_+^\ast ,L^2(\Omega)^9) 
\cap L^1_{\text{loc}}(\mathbb{R}_+ ,L^2(\Omega)^9) \right] $$ 

follows from Theorem 8.4 in \cite{pal7}.  

It remains to be proved that the solution $({\bf u},{\bf S})$ obtained in \cite{pal7} satisfies

\beq\label{uta4}
\displaystyle\mathop{\sup}_{t\geq1} \|{\bf u}(t)\|_V+\displaystyle\mathop{\sup}_{t\geq1} \|{\bf S}(t)\|_{L^2(\Omega)^9}<+\infty
\eeq

This is essentially contained in the arguments presented in \cite{pal7}.  Indeed, since ${\bf u}\in \mathscr{C}^0 (\mathbb{R}_+,V)$, we write ${\bf u}=\displaystyle\sum_{k=1}^{+\infty}\alpha_k(t){\bf w}_k$, $t>0$.  From equation 125 in  \cite{pal7} one gets

\begin{eqnarray}\label{wf4}
\lambda_k|\alpha_k(t)|^2  & \leq & M \bigg\{  \left[ \left(\dfrac{1}{t^{ \delta/2}}+\dfrac{1}{t^{1- \delta/2}} \right)^2+ 
\dfrac{\left(t\lambda_k^{1/(2-\delta)}\right)^{2(1-\delta/2)}}{t^{2(1-\delta/2)}}e^{-2\gamma t\lambda_k^{1/(2-\delta)}} \right]|\alpha_k^0|^2\nonumber\\
& + &   \left( \dfrac{1}{t^{2(1-\delta)}}+\dfrac{(t\lambda_k^{1/(2-\delta)})^{2(1-\delta)}}{t^{2(1-\delta)}}e^{-2at\lambda_k^{1/(2-\delta)}}\right)|b_k|^2 \bigg\}
\end{eqnarray}

In the above, $\delta=(\alpha-\beta)\in]0,1[$, $a>0$, $\gamma>0$, and $\alpha_k^0 =\langle  {\bf u}_0 | {\bf w}_k \rangle_{H}$.  Clearly ${\bf u}_0\in L^2(\Omega)^3$,  ${\bf S}_0\in L^2(\Omega)^9$ implies that $\displaystyle\sum_{k=1}^{+\infty}|\alpha_k^0|^2<+\infty$ and $\displaystyle\sum_{k=1}^{+\infty}|b_k |^2<+\infty$.  Hence

\beq\label{uta5}
\|{\bf u}\|^2_V=\displaystyle\sum_{k=1}^{+\infty}\lambda_k|\alpha_k(t)|^2 \leq \dfrac{M}{t^{\inf (\delta,2-2\delta)}}\mathop{\longrightarrow}_{t\to+\infty}0
\eeq

and $ {\bf u}_0\in\mathscr{C}_b^0(\mathbb{R}_+^\ast,V)$.  

We use the equation that defines ${\bf S}$ given right below equation 137 in \cite{pal7}.  Then:

\beq\label{uta1}
\|{\bf S}\|_{L^2(\Omega)^9}\leq M \left[ \displaystyle \sum_{k=1}^{+\infty}\lambda_k |\rho\ast\alpha_k(t)|^2+|W_0(t)| \|{\bf S}_0\|_{L^2(\Omega)^9} \right]
\eeq

From \eqref{wf1} we see that 

\beq\label{uta2}
|W_0(t)|\displaystyle \mathop{\longrightarrow}_{t\to+\infty}0
\eeq

Moreover, (see equation 130 in  \cite{pal7})

$$\lambda_k |\rho\ast\alpha_k(t)|^2 \leq M \left[  \left( \dfrac{1}{t^\delta}+\dfrac{\left( t\lambda_k^{1/(2-\delta)}\right)^\delta}{t^\delta}\exp\left[-at\lambda_k^{1/(2-\delta)}\right]  \right)|\alpha_k^0|^2  + \left( \dfrac{1}{t^{2\epsilon}}+\exp(-2at) \right)|b_k|^2 \right],\epsilon>0, a>0$$

Hence

\beq\label{uta3}
\displaystyle\sum_{k=1}^{+\infty}\lambda_k|\alpha_k(t)|^2 \leq \dfrac{M}{t^{\inf (\delta,2\epsilon)}}, \quad \text{for}\quad t\geq 1
\eeq

Finally (see \eqref{uta1}-\eqref{uta3}): $\|{\bf S}\|_{L^2(\Omega)^9} \displaystyle \mathop{\longrightarrow}_{t\to+\infty}0$.

It follows that ${\bf S}\in\mathscr{C}^0_b(\mathbb{R}_+^\ast,L^2(\Omega)^9)$.  The existence of at least one solution belonging to the functional space of \eqref{ut11} is thus proved.

The uniqueness of such a solution results from Corollary \ref{uc1} and from the fact that

$$\left[ \mathscr{C}^0 (\mathbb{R}_+,H)\cap\mathscr{C}^0_b (\mathbb{R}_+^\ast,V)\cap L^1_{\text{loc}}(\mathbb{R}_+,V) \right] \times \left[ \mathscr{C}^0_b (\mathbb{R}_+^\ast,L^2(\Omega)^9) \cap L^1_{\text{loc}}(\mathbb{R}_+ ,L^2(\Omega)^9) \right] \subset \mathscr{F}$$.

\end{proof}

%%%%%%%%%%%%%%%%%%%%%%%%%%%%%%%%%%%%%%%%%%%%%%%%%%%%%%%%%%%
%%%%%%%%%%%%%%%%%%%%%%%%%%%%%%%%%%%%%%%%%%%%%%%%%%%%%%%%%%
\section{Functional spaces}\label{so} 
%%%%%%%%%%%%%%%%%%%%%%%%%%%%%%%%%%%%%%%%%%%%%%%%%%%%%%%%%%
%%%%%%%%%%%%%%%%%%%%%%%%%%%%%%%%%%%%%%%%%%%%%%%%%%%%%%%%%%%%%

In order to prove the continuity of the $({\bf u},{\bf S})$ at $t=0$ we recall several classical functional spaces (see also \cite{lim} and \cite{rt}).

Denote ${\bs \epsilon}_k=\displaystyle \dfrac{\nabla {\bf w}_k}{\sqrt{\lambda_k}}$, $k\in\mathbb{N}^\ast$.  Let $\Pi:L^2(\Omega)^9\to L^2(\Omega)^9$ be the orthogonal projection operator of $L^2(\Omega)^9 $ onto $\left[\text{Vect}({\bs \epsilon}_k)_{k\in\mathbb{N}^\ast} \right]^{\bot}$, $\theta\geq0$.  

For any ${\bf f}\in L^2(\Omega)^9$, denote $\|{\bf f}\|^2_{D_\theta}:=\displaystyle \sum_{q=1}^{+\infty}\lambda_q^\theta |\left\langle {\bf f}|{\bs \epsilon}_q \right\rangle_{L^2(\Omega)^9} |^2+\|\Pi({\bf f})\|^2_{L^2(\Omega)^9}$.  Let $D_\theta :=\{{\bf f} \in L^2(\Omega)^9 \,\text{s.t.}\,\|f\|_{D_\theta}<+\infty \}$.  

For any ${\bf f},{\bf g} \in D_\theta $, denote $\left\langle {\bf f}|{\bf g}\right\rangle_{D_\theta}:=\displaystyle \sum_{q=1}^{+\infty}\lambda_q^\theta \left\langle {\bf f}|{\bs \epsilon}_q\right\rangle_{L^2(\Omega)^9}  \overline{\left\langle {\bf g}|{\bs \epsilon}_q \right\rangle} _{L^2(\Omega)^9}  +\left\langle \Pi({\bf f})|\Pi({\bf g})\right\rangle_{_{L^2(\Omega)^9}} $.  The functional space $(D_\theta, \left\langle\,|\,\right\rangle_{D_\theta})$ is a Hilbert space.  

For any ${\bf f}\in H$, let $\| {\bf f} \|^2_{H_\theta}:=\displaystyle \sum_{q=1}^{+\infty}\lambda_q^\theta |\left\langle {\bf f}|{\bf w}_q \right\rangle_{H}|^2$.  Next, let $H_\theta :=\{{\bf f}\in H,\, \|{\bf f}\|_{H_\theta}<+\infty \}$.  For any $f,g\in H_\theta$, $\left\langle f|g \right\rangle_{H_\theta}:=\displaystyle \sum_{k=1}^{+\infty}\lambda_q^\theta \left\langle f|{\bf w}_k \right\rangle_{H}  \overline{\left\langle g|{\bf w}_k \right\rangle}_{H}$.   As the sequence $({\bf w}_k)_{k\in\mathbb{N}^\ast}$ is complete in $L^2(\Omega)^9 $, the functional space $(H_\theta,\left\langle \quad|\quad\right\rangle_{H_\theta})$ is a Hilbert space.  

Remark that, for any $0\leq \theta \leq \theta' \leq1 \leq \theta'' $, one has:

\beq\label{so1}
H=H_0\hookleftarrow H_{\theta}\hookleftarrow H_{\theta'}\hookleftarrow H_{1}=V \hookleftarrow H_{\theta''}
\eeq

\beq\label{so2}
L^2(\Omega)^9=D_0\hookleftarrow D_{\theta}\hookleftarrow D_{\theta'}\hookleftarrow D_{1} \hookleftarrow D_{\theta''}
\eeq

The above injections are dense; use of them will be often made from now on.

The following $\Delta_\theta$ spaces are closely related to the $D_\theta$ ones. Let $P:L^2(\Omega)^9 \to  L^2(\Omega)^9$ be the orthogonal projection operator from $L^2(\Omega)^9$ onto $\left[\displaystyle \bigcup_{k=1}^{+\infty} \left\{ {\bs \epsilon}_k,{\bs \epsilon}_k^T \right\} \right]^{\perp}$.

Let $\theta\in\mathbb{R}$.  For any element ${\bf f} \in L^2(\Omega)^9$, denote

\beq\label{sow1}
\|{\bf f}\|^2_{\Delta_\theta}=\displaystyle\sum_{q=1}^{+\infty}\lambda_q^\theta \left| \langle {\bf f} | {\bs \epsilon}_q  \rangle_{L^2(\Omega)^9} \right|^2 + \displaystyle\sum_{q=1}^{+\infty}\lambda_q^\theta \left| \langle {\bf f} | {\bs \epsilon}_q^T  \rangle_{L^2(\Omega)^9} \right|^2 + \left\| P({\bf f})  \right\|^2_{L^2(\Omega)^9}
\eeq

For any $\theta\geq0$, $\Delta_\theta :=\{ {\bf f} \in L^2(\Omega)^9 \,\text{s.t.}\, \left\| {\bf f} \right\|_{\Delta_\theta}<+\infty \}$.  The functional space $\Delta_\theta$ endowed with the inner product defined as: $\forall {\bf f} \in \Delta_\theta$, $\forall {\bf g} \in \Delta_\theta$,

\beq\label{sow2}
\langle {\bf f} | {\bf g} \rangle_{\Delta_\theta} = \displaystyle\sum_{q=1}^{+\infty}\lambda_q^\theta   \langle {\bf f} | {\bs \epsilon}_q \rangle_{L^2(\Omega)^9} \overline{ \langle {\bf g} | {\bs \epsilon}_q  \rangle}_{L^2(\Omega)^9} + \displaystyle\sum_{q=1}^{+\infty}\lambda_q^\theta   \langle {\bf f} | {\bs \epsilon}_q^T \rangle_{L^2(\Omega)^9} \overline{ \langle {\bf g} | {\bs \epsilon}_q^T  \rangle}_{L^2(\Omega)^9} + \langle P({\bf f})|P({\bf g})  \rangle_{L^2(\Omega)^9}
\eeq

is a Hilbert space.

For any $\theta<0$, let $\Delta_\theta$ denote the topological dual space of $\Delta_{-\theta}$, i.e. $\Delta_\theta=\left(\Delta_{-\theta}\right)'$.  The space $\Delta_\theta$  is the completion of $(L^2(\Omega)^9,\|\,\|_{\Delta_\theta})$.  Next, note that whenever $0\leq \gamma\leq \tilde{\gamma}$, the following injections $L^2(\Omega)^9=\Delta_0 \hookleftarrow \Delta_\gamma \hookleftarrow \Delta_{\tilde{\gamma}}$ are dense.  It results that $\Delta_{-\tilde{\gamma}} \hookleftarrow \Delta_{-\gamma} = \left(\Delta_\gamma \right)' \hookleftarrow \left( L^2(\Omega)^9 \right)' \simeq L^2(\Omega)^9= \Delta_0 \hookleftarrow \Delta_\gamma \hookleftarrow \Delta_{\tilde{\gamma}}$, invoking the fact that  $L^2(\Omega)^9 $ and  $\left( L^2(\Omega)^9 \right)'$ are isomorphic to each other.

Next, for any ${\bf f}\in H_0^1(\Omega)^9$, one has:

\begin{eqnarray}\label{so3 3}
\|{\bf f}\|_{\Delta_1}^2 & = &  \sum_{q=1}^{+\infty}\lambda_q \left|\langle {\bf f}|{\bs \epsilon}_q \rangle_{L^2(\Omega)^9} \right|^2+\sum_{q=1}^{+\infty}\lambda_q \left|\langle {\bf f}|{\bs \epsilon}_q^T \rangle_{L^2(\Omega)^9} \right|^2+\|P({\bf f})\|^2_{L^2(\Omega)^9}  \nonumber\\
& = & \sum_{q=1}^{+\infty}  \left|\langle \nabla\cdot {\bf f}|{\bf w}_q \rangle_{L^2(\Omega)^3} \right|^2+\left|\langle \nabla\cdot {\bf f}^T|{\bf w}_q \rangle_{L^2(\Omega)^3} \right|^2+\|P({\bf f})\|^2_{L^2(\Omega)^3} \label{so3 2} \nonumber\\
& \leq & K\|{\bf f}\|^2_{H_0^1(\Omega)^9}
\end{eqnarray}

due to the Poincar\'e's inequality.  Consequently $H_0^1(\Omega)^9\hookrightarrow \Delta_1$ and the restriction $r:\Delta'_1\to H^{-1}(\Omega)^9 $, such that  
$T  \stackrel{r}{\mapsto} T|_{H^1_0(\Omega)^9} $ is continuous.

\begin{lemma}\label{sxl4}

Let $\theta\in\mathbb{R}_+$.

\begin{enumerate}[(a)]
\item\label{a}The sequence $({\bs \epsilon}_k)_{k\in \mathbb{N}^\ast}$ is orthogonal in $D_\theta$.  Moreover, $\|{\bs \epsilon}_k\|_{D_\theta}=\lambda_k^{\theta/2}$. 
\item\label{b} The sequence $({\bs \epsilon}^T_k)_{k\in \mathbb{N}^\ast}$ is orthogonal in $D_\theta$.
\item\label{c} The sequence $({\bs \epsilon}_k+{\bs \epsilon}^T_k)_{k\in \mathbb{N}^\ast}$ is orthogonal in $D_\theta$, and $\|{\bs \epsilon}_k+{\bs \epsilon}^T_k\|_{D_\theta}=(1+\lambda_k^{\theta})^{1/2}$.
\item\label{d} The sequence $({\bf w}_k)_{k\in \mathbb{N}^\ast}$ is orthogonal in $H_\theta$, and $\|{\bf w}_k\|_{H_\theta}=\lambda_k^{\theta/2}$.
\item\label{e} Let ${\bf f}\in L^2(\Omega)^9$.  Denote ${\bf f}=\displaystyle \sum_{k=1}^{+\infty}a_k  {\bs \epsilon}_k + \displaystyle \sum_{k=1}^{+\infty}b_k  {\bs \epsilon}^T_k + P({\bf f})$.  Then: 

$\|{\bf f}\|^2_{\Delta_{-\theta}}=\displaystyle \sum_{k=1}^{+\infty} \lambda_k^{-\theta} \left( |a_k|^2+|b_k|^2\right) + \|P({\bf f})\|^2_{L^2(\Omega)^9}$.
\end{enumerate}

\end{lemma}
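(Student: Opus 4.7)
The plan is to reduce everything to two $L^2(\Omega)^9$-orthogonality facts and then to unfold the definitions of the $D_\theta$, $H_\theta$, and $\Delta_{-\theta}$ norms. First I would establish that $\langle \bs{\epsilon}_k\,|\,\bs{\epsilon}_j\rangle_{L^2(\Omega)^9}=\delta_{kj}$, which follows from rewriting the $L^2$-inner product of the gradients as the $V$-inner product of $\mathbf{w}_k$ and $\mathbf{w}_j$ and then invoking the eigenvalue identity \eqref{eig1} together with $\|\mathbf{w}_k\|_H=1$. The crucial second fact is $\langle \bs{\epsilon}_k^T\,|\,\bs{\epsilon}_j\rangle_{L^2(\Omega)^9}=0$ for all $k,j$: unfolding the Frobenius inner product gives $\int_\Omega \partial_i (w_k)_j\,\partial_j (w_q)_i\,\ud\mathbf{x}$, and one integration by parts converts this into a term proportional to $\nabla\cdot\mathbf{w}_q=0$. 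Finally, $A^T\!:\!B^T=A\!:\!B$ immediately yields $\langle \bs{\epsilon}_k^T\,|\,\bs{\epsilon}_j^T\rangle_{L^2}=\delta_{kj}$.

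Once these identities are in hand, parts \eqref{a}--\eqref{d} are direct expansions of the inner products. For \eqref{a}, $\Pi(\bs{\epsilon}_k)=0$ by definition of $\Pi$ and only the weighted sum $\sum_q \lambda_q^\theta \delta_{kq}\delta_{jq}$ survives. For \eqref{b}, the orthogonality $\langle \bs{\epsilon}_k^T\,|\,\bs{\epsilon}_q\rangle_{L^2}=0$ kills the weighted sum and forces $\Pi(\bs{\epsilon}_k^T)=\bs{\epsilon}_k^T$, so $\langle \bs{\epsilon}_k^T\,|\,\bs{\epsilon}_j^T\rangle_{D_\theta}$ collapses to $\langle \bs{\epsilon}_k^T\,|\,\bs{\epsilon}_j^T\rangle_{L^2}=\delta_{kj}$. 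Part \eqref{c} expands bilinearly: the diagonal terms contribute $\lambda_k^\theta\delta_{kj}$ and $\delta_{kj}$, while the cross terms $\langle \bs{\epsilon}_k\,|\,\bs{\epsilon}_j^T\rangle_{D_\theta}$ and $\langle \bs{\epsilon}_k^T\,|\,\bs{\epsilon}_j\rangle_{D_\theta}$ both vanish, since in the weighted sum one factor equals zero and the projection term is killed by $\Pi(\bs{\epsilon}_k)=0$. Part \eqref{d} is immediate from the orthonormality of $(\mathbf{w}_k)$ in $H$ and the defining formula for $\langle\,\cdot\,|\,\cdot\,\rangle_{H_\theta}$.

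For \eqref{e} I would first explicitly record the orthogonal decomposition of $L^2(\Omega)^9$ implicit in \eqref{a}--\eqref{c}: the closed subspaces $\overline{\mathrm{Vect}}(\bs{\epsilon}_k)$, $\overline{\mathrm{Vect}}(\bs{\epsilon}_k^T)$ and $\mathrm{range}(P)$ are pairwise orthogonal in $L^2(\Omega)^9$ and their sum is all of $L^2(\Omega)^9$; the expansion $\mathbf{f}=\sum_k a_k\bs{\epsilon}_k+\sum_k b_k\bs{\epsilon}_k^T+P(\mathbf{f})$ with $a_k=\langle\mathbf{f}\,|\,\bs{\epsilon}_k\rangle_{L^2}$ and $b_k=\langle\mathbf{f}\,|\,\bs{\epsilon}_k^T\rangle_{L^2}$ is then unique. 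I would next use the duality definition $\|\mathbf{f}\|_{\Delta_{-\theta}}=\sup_{\|\mathbf{g}\|_{\Delta_\theta}\leq 1}|\langle\mathbf{f}\,|\,\mathbf{g}\rangle_{L^2}|$, expand $\mathbf{g}$ in the same basis, rescale its coefficients by $\lambda_k^{\theta/2}$ so that the constraint becomes a single $\ell^2$-type ball, and apply Cauchy--Schwarz. The optimizer is explicit (take coefficients proportional to $\lambda_k^{-\theta}\overline{a_k}$, $\lambda_k^{-\theta}\overline{b_k}$ and $P(\mathbf{f})$), giving exactly the formula stated.

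The main obstacle I expect is not computational but conceptual: one has to be careful about the fact that $\Delta_{-\theta}$ is defined as a topological dual (and as the completion of $L^2(\Omega)^9$ for the $\|\cdot\|_{\Delta_{-\theta}}$ seminorm), so in \eqref{e} one must verify that the Cauchy--Schwarz optimizer actually lies in $\Delta_\theta$ (i.e.\ that $\sum_k \lambda_k^{-\theta}(|a_k|^2+|b_k|^2)<+\infty$ when finite, and approximate otherwise). Once this bookkeeping is handled, the lemma follows from the two $L^2$-orthogonality identities and the Parseval expansions above.
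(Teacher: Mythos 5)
Your proof is correct and follows essentially the same route as the paper: both arguments rest on the two $L^2(\Omega)^9$ identities $\langle{\bs \epsilon}_k|{\bs \epsilon}_q\rangle_{L^2}=\delta_{kq}$ (via the eigenvalue relation \eqref{eig1}) and $\langle{\bs \epsilon}_k|{\bs \epsilon}_q^T\rangle_{L^2}=0$ (via integration by parts and $\nabla\cdot{\bf w}_q=0$), after which (a)--(e) follow by unfolding the definitions of the $D_\theta$, $H_\theta$ and $\Delta_{-\theta}$ inner products. The only cosmetic difference is in (e), where you re-derive the $\Delta_{-\theta}$ norm through the duality/supremum characterization and Cauchy--Schwarz, whereas the paper simply reads it off from formula \eqref{sow1} applied with index $-\theta$ once the coefficients are identified as $a_k=\langle{\bf f}|{\bs \epsilon}_k\rangle_{L^2}$, $b_k=\langle{\bf f}|{\bs \epsilon}_k^T\rangle_{L^2}$; both routes are valid.
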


\begin{proof}
Observe that (cf. eq.\eqref{eig1}) for any $(k,q)\in \mathbb{N}^{\ast 2} $:

\beq\label{sol1}
\left\langle{\bs \epsilon}_k|{\bs \epsilon}_q \right\rangle _{L^2}=\left\langle{\bf w}_k|{\bf w}_q \right\rangle _{L^2}=\delta_{kq}
\eeq

On the other hand, since $\nabla\cdot {\bf w}_k=0$, 

\begin{eqnarray}\label{sol3}
\left\langle{\bs \epsilon}_k|{\bs \epsilon}^{T}_q \right\rangle _{L^2} & = &  \sum_{i,j}\int_\Omega \dfrac{\partial ({\bf w}_k)_i }{\partial x_j}({\bf x})\overline{\dfrac{\partial ({\bf w}_q)_j }{\partial x_i}}({\bf x}) \ud {\bf x} \label{sol2} \nonumber\\
& = & \int_\Omega \left(\sum_{i}\dfrac{\partial ({\bf w}_k)_i }{\partial x_j}({\bf x}) \right)\left(\overline{ \sum_{j}\dfrac{\partial ({\bf w}_q)_j }{\partial x_i}}({\bf x})\right) \ud {\bf x}=0  
\end{eqnarray}

Hence:

\beq\label{sol4}
\left\langle{\bs \epsilon}_k|{\bs \epsilon}^{T}_q \right\rangle _{L^2} =0
\eeq

The statements (a) to (e) result from Eqs.\eqref{sol1}-\eqref{sol4}.  

\end{proof}

Except for the injection $H_{2k} \hookrightarrow H^{2k}(\Omega)^3$ (see below), the following description of the spaces will not be used in this paper.

Let first $\theta\in[0,1]$.   

Let $\Lambda:V \to H$, such that $\Lambda\displaystyle\left(\sum_{k\in\mathbb{N}^\ast}a_k{\bf w}_k \right) = \displaystyle\sum_{k\in\mathbb{N}^\ast}a_k\sqrt{\lambda_k}{\bf w}_k $.  Then, for any $ ({\bf u},{\bf v})\in V^2$, $\left\langle\Lambda{\bf u}|\Lambda{\bf v} \right\rangle_H =\left\langle{\bf u}|{\bf v} \right\rangle_V$, and $H_\theta=D(\Lambda^\theta)=[V,H]_{1-\theta}$, where $[V,H]_{1-\theta}$ stands for the holomorphic interpolation of spaces $V$ and $H$, and $D(\Lambda^\theta)$ for the domain of $\Lambda^\theta$.  Denote $H^0_0(\Omega)\equiv L^2(\Omega)$.  Let the canonical injection $H\stackrel{i}{\hookrightarrow} H^0_0(\Omega)^3$ and  $V\stackrel{i|_V}{\hookrightarrow} H^1_0(\Omega)^3$ be its restriction.

Then $\displaystyle H_\theta =\displaystyle [V,H]_{1-\theta}\stackrel{i|_{[V,H]_{1-\theta}}}{\hookrightarrow}[H^1_0(\Omega)^3,H^0_0(\Omega)^3]_{1-\theta}\hookrightarrow H^\theta_0(\Omega)^3$ (the last continuous injection $\hookrightarrow$ boils down to an equality $=$ whenever $\theta\neq 1/2$).  Let now $\gamma_n$ denote the normal-trace application.  It is well known that $H=\{ {\bf u}\in H^0(\Omega)^3,\,\text{s.t.}\,\nabla\cdot{\bf u} =0, \,\gamma_n({\bf u})=0 \}$.  From the preceding arguments it results that we have the continuous injection $H_\theta \hookrightarrow H\cap H^\theta_0(\Omega)^3=\{ {\bf u}\in H^\theta_0(\Omega)^3,\,\text{s.t.}\,\nabla\cdot{\bf u} =0, \,\gamma_n({\bf u})=0 \}$, with the space $H\cap H^\theta_0(\Omega)^3$ being endowed with the $H^\theta_0(\Omega)^3$ topology.

Let now $\theta\in\mathbb{N}^\ast$.

As quoted on page 106 in \cite{rt}, $H_2=D(\Lambda^2)=H^2(\Omega)^3\cap V$.  Also, invoking  Agmon - Douglis - Nirenberg's Theorem as stated on page 832 in \cite{dl}, leads to $H_{2k}=D(\Lambda^{2k})\hookrightarrow H^{2k}(\Omega)^3\cap V$, $k\in\mathbb{N}^\ast$.  Here $H^{2k}(\Omega)$ are classical Sobolev spaces.

%%%%%%%%%%%%%%%%%%%%%%%%%%%%%%%%%%%%%%%%%%%%
%%%%%%%%%%%%%%%%%%%%%%%%%%%%%%%%%%%%%%%%%%%%
%%%%%%%%%%%%%%%%%%%%%%%%%%%%%%%%%%%%%%%%%%%%
\section{The continuity of the solution $({\bf u},{\bf S})$ at $t=0$}\label{n}
%%%%%%%%%%%%%%%%%%%%%%%%%%%%%%%%%%%%%%%%%%%%
%%%%%%%%%%%%%%%%%%%%%%%%%%%%%%%%%%%%%%%%%%%%
%%%%%%%%%%%%%%%%%%%%%%%%%%%%%%%%%%%%%%%%%%%%

From now on $({\bf u},{\bf S})$ denotes the solution to equations \eqref{fv1}-\eqref{fv2}, with initial data $({\bf u}_0,{\bf S}_0)\in H\times L^2(\Omega)^9$ (see Theorem \ref{ut1}).  In order to prove continuity results we recall several representation formulas for ${\bf u}$ and ${\bf S}$.    First, functions $\alpha_k$, $k\in \mathbb{N}^\ast$, are defined by equations \eqref{al:a1}-\eqref{al:a2}.  Equivalently, for $x\in\mathbb{R}_+$ (see \cite{pal7}),

\begin{equation} \label{per3 2}
\alpha_k(t) =  \dfrac{1}{2\pi}\left[\lim_{A\rightarrow+\infty}  \int_{-A}^{+A} T_{\lambda_k} (x+iy)e^{(x+iy)t}\ud y \alpha_k^0 
- \lim_{A\rightarrow+\infty}  \int_{-A}^{+A} 
(T_{\lambda_k}w) (x+iy)e^{(x+iy)t}\ud y \sqrt{\lambda_k} b_k \right]
\end{equation}

with $\displaystyle T_\mu (s)=\dfrac{s^{1-\beta}(s^\alpha+1)}{s^{2-\beta}(s^\alpha+1)+\mu}$, $\displaystyle w (s)=\dfrac{1}{s^{1-\alpha}(s^\alpha+1) }$, $\mu\in\mathbb{R}_+$, $s\in \mathbb{C}-\mathbb{R}_-$, and $\alpha_k^0=\langle {\bf u}_0|{\bf w}_k \rangle_H$, $b_k=\displaystyle\int_\Omega ({\bf S}_0:\nabla {\bf w}_k)({\bf x})\ud{\bf x} $.

Notice that eq.\eqref{per3 2} is given in \cite{pal7} only for $x\geq M$.  The general result ($x\in \mathbb{R}_+$)  follows from a simple use of the Cauchy formula; details are omitted.  Regarding function ${\bf S}$, recall the following formula from \cite{pal7}: 

\beq\label{t14 3}
{\bf S}=\sum_{k=1}^{+\infty}(\rho\ast\alpha_k)\otimes(\nabla{\bf w}_k+\nabla^T{\bf w}_k)+W_0\otimes {\bf S}_0
\eeq

Notation $h=f\otimes g$ means $h(x,y)=f(x)g(y)$.  As quoted in \cite{pal7}, the series in \eqref{t14 3} converges in $\mathscr{C}^0(\mathbb{R}_+^\ast,L^2(\Omega)^9)$ and in $L^2_{\text{loc}}(\mathbb{R}_+ ,L^2(\Omega)^9)$.  Here $\rho=E_\alpha\ast \dfrac{t^{-\beta}}{\Gamma(1-\beta)}$, and $0<\rho(t)\leq k t^{-\delta}$ (see \cite{pal7}), $\delta=\beta-\alpha$.

The following estimate will give the continuity at $t=0$ of $({\bf u},{\bf S})$.

\begin{lemma}\label{sol5}
For any $\mu_0>0$, $\exists M>0$,  such that $\forall ({\bf x},t)\in(\mathbb{R}_+)^3 \times \mathbb{R}_+ $, and $\forall \mu\geq\mu_0$, we have:

\beq\label{sol6}
\displaystyle\mathop{\lim}_{A\to\infty}\displaystyle\left|\int_{-A}^{+A}T_\mu w(x+iy)e^{(x+iy)t} \ud y  \right| \leq \displaystyle\int_{-\infty}^{+\infty}\left|(T_\mu w)(iy) \right|\ud y \leq \dfrac{M}{\mu^{1/(2-\delta)}}
\eeq
 
\end{lemma}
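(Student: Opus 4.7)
The plan is to prove the two inequalities in \eqref{sol6} separately: the first by shifting the contour from $\mathrm{Re}(s)=x$ to the imaginary axis, and the second via a scaling change of variable.

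First I would note the simplification
\[
(T_\mu w)(s) = \dfrac{s^{\alpha-\beta}}{s^{2-\beta}(s^\alpha+1)+\mu} = \dfrac{s^{-\delta}}{s^{2-\delta}+s^{2-\beta}+\mu},
\]
which is holomorphic on $\mathbb{C}\setminus(-\infty,0]$ and decays as $|s|^{-2}$ at infinity (since $2-\delta>2-\beta$). For $s=re^{i\phi}$ with $\phi\in(0,\pi/2]$, both angles $(2-\delta)\phi$ and $(2-\beta)\phi$ lie in $(0,\pi)$ because $2-\delta,2-\beta\in(1,2)$, so $\mathrm{Im}(s^{2-\delta}+s^{2-\beta}+\mu)>0$; the cases $\phi=0$ and $\phi\in[-\pi/2,0)$ follow from positivity and complex conjugation. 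Hence the denominator has no zero on the closed right half-plane. I would then apply Cauchy's theorem to the rectangle with vertices $\pm iA,\,x\pm iA$, indented by a small semicircle around the branch point $s=0$ (whose contribution is $O(\epsilon^{1-\delta})\to 0$ since $\delta<1$), noting that the horizontal sides contribute at most $O(xe^{xt}/A^2)\to 0$. This yields $\lim_{A\to+\infty}\int_{-A}^{+A}(T_\mu w)(x+iy)e^{(x+iy)t}\,\ud y=\int_{-\infty}^{+\infty}(T_\mu w)(iy)e^{iyt}\,\ud y$, and the first inequality follows upon taking moduli, since $|e^{iyt}|=1$.

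For the quantitative bound I would substitute $y=\mu^{1/(2-\delta)}\eta$, so that $(iy)^{2-\delta}=\mu\,(i\eta)^{2-\delta}$ and $(iy)^{2-\beta}=\mu^{(2-\beta)/(2-\delta)}(i\eta)^{2-\beta}$. Using the identity $(2-\beta)/(2-\delta)-1=-\alpha/(2-\delta)$, a bookkeeping of exponents reduces $\int_{-\infty}^{+\infty}|(T_\mu w)(iy)|\,\ud y$ to
\[
\mu^{-1/(2-\delta)}\int_{-\infty}^{+\infty}\dfrac{|\eta|^{-\delta}}{\bigl|(i\eta)^{2-\delta}+c\,(i\eta)^{2-\beta}+1\bigr|}\,\ud \eta,
\]
where $c:=\mu^{-\alpha/(2-\delta)}\in(0,c_0]$ with $c_0:=\mu_0^{-\alpha/(2-\delta)}$. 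It remains to prove that the $\eta$-integral is bounded uniformly in $c\in(0,c_0]$. The imaginary parts of $(i\eta)^{2-\delta}$ and $c(i\eta)^{2-\beta}$ share a common sign since $(2-\delta)\pi/2,(2-\beta)\pi/2\in(\pi/2,\pi)$, giving the lower bound $|\text{denominator}|\geq|\eta|^{2-\delta}\sin((2-\delta)\pi/2)$; this controls the tail $|\eta|\geq\eta_0$, whose integrand decays as $|\eta|^{-2}$. For $|\eta|\leq\eta_0$ with $\eta_0$ small enough that $|\eta|^{2-\delta}+c_0|\eta|^{2-\beta}\leq 1/2$, the denominator is bounded below by $1/2$, and the integrand is dominated by $2|\eta|^{-\delta}$, integrable at $0$ since $\delta<1$. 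These two ranges cover $\mathbb{R}$, yielding the $\mu_0$-dependent constant $M$.

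The main obstacle is the first step: the contour shift must simultaneously exploit the pole-free right half-plane, the indentation around the branch point at $s=0$, and the $|s|^{-2}$ decay at infinity. The second step is then essentially bookkeeping, once the correct rescaling $y\sim\mu^{1/(2-\delta)}$ is identified from the structure of the dominant term $s^{2-\delta}$ in the denominator.
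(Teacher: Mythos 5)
Your proof is correct and, for the substantive second inequality, follows essentially the same route as the paper: your rescaling $y=\mu^{1/(2-\delta)}\eta$ followed by a split at $|\eta|=\eta_0$ is just a normalized version of the paper's split of the integral at $y=\mu^{1/(2-\delta)}$ with the two lower bounds on the denominator (one of order $\mu$ near the origin, one of order $|y|^{2-\delta}$ in the tail, the latter obtained in both cases from the sign-definite imaginary part). The only genuine difference is that you also write out the contour-deformation argument for the first inequality (zero-free right half-plane, indentation at the branch point, $|s|^{-2}$ decay), which the paper asserts without proof; that addition is sound.
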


\begin{proof}

Whenever $y>0$, $\displaystyle \left|(T_\mu w)(iy) \right|=\dfrac{1}{|y|^\delta}\dfrac{1}{\left|\mu+\left(y e^{i\pi/2} \right)^{2-\delta}+ \left(y e^{i\pi/2} \right)^{2-\beta} \right|} $.  Therefore,

\begin{eqnarray}\label{sol7 5}
\left|\mu+\left(y e^{i\pi/2} \right)^{2-\delta}+ \left(y e^{i\pi/2} \right)^{2-\beta} \right| & \geq & \left|\mathrm{Im} \left[ \mu e^{i\pi(\beta-2)/2} + y^{2-\delta} e^{i\pi\alpha/2}+y^{2-\beta}  \right] \right|  \nonumber\\
& \geq &  \left|-\mu \sin(\pi\beta/2)+y^{2-\delta}\sin(\pi\alpha/2) \right|  \nonumber\\
& \geq & \mu \sin(\pi\beta/2)-y^{2-\delta}\sin(\pi\alpha/2)  \nonumber\\
& \geq &  K\mu, \,\text{for}\,y\leq \mu^{1/(2-\delta)} 
\end{eqnarray}

The constant $K=\sin(\pi\beta/2)-\sin(\pi\alpha/2)>0$ is independent of $\mu$.  Moreover,

\begin{eqnarray}\label{sol8 5}
\left|\mu+\left(y e^{i\pi/2} \right)^{2-\delta}+ \left(y e^{i\pi/2} \right)^{2-\beta} \right| \label{sol8 1} & \geq & \left|\mathrm{Im} \left[ \mu+ \left(y e^{i\pi/2} \right)^{2-\delta} + \left(y e^{i\pi/2} \right)^{2-\beta}  \right] \right| \label{sol8 2} \nonumber\\
& = & \left|y^{2-\delta} \sin(\pi-\pi\delta/2)+y^{2-\beta}\sin(\pi-\pi\beta/2) \right| \nonumber\\
& = & \left|y^{2-\delta} \sin(\pi\delta/2)+y^{2-\beta}\sin(\pi\beta/2) \right| \nonumber\\
& \geq &  Ky^{2-\delta}, \,\text{for}\,y\geq \mu_0^{1/(2-\delta)} 
\end{eqnarray}

From the above estimates we infer that:

\begin{eqnarray}\label{sol9 3}
\int_{0}^{+\infty}|(T_\mu w)(iy)|\ud y & = & \int_0^{\mu^{1/(2-\delta)}}\dfrac{\ud y}{y^\delta\left|\mu+\left(y e^{i\pi/2} \right)^{2-\delta}+ \left(y e^{i\pi/2} \right)^{2-\beta} \right|} \nonumber\\
& + & \int_{\mu^{1/(2-\delta)}}^{+\infty}\dfrac{\ud y}{y^\delta\left|\mu+\left(y e^{i\pi/2} \right)^{2-\delta}+ \left(y e^{i\pi/2} \right)^{2-\beta} \right|} \nonumber\\
& \leq & \int_0^{\mu^{1/(2-\delta)}} \dfrac{K}{\mu y^\delta}\ud y+\int_{ \mu^{1/(2-\delta)}}^{+\infty}\dfrac{K}{y^\delta y^{2-\delta}}\ud y \nonumber\\
& \leq & \dfrac{M}{\mu^{1/(2-\delta)}} 
\end{eqnarray}

A similar estimate can be obtained for $\displaystyle\int_{-\infty}^0|(T_\mu w)(iy)|\ud y $.  Combining these results achieves the proof.

\end{proof}

Denote, $\delta=\beta-\alpha$, $\omega=\delta/(2-\delta)$ and notice that $0<\omega<\delta<1$. From now on we shall sometimes write $\alpha_k ({\bf u}_0,{\bf S}_0 )$ instead of $\alpha_k $; of course $\alpha_k $ is linear w.r.t. initial data $ ({\bf u}_0,{\bf S}_0 )$.  Most of the following estimates are already proved in \cite{pal7}, save for those derived from Lemma \ref{sol5}.

\begin{proposition}\label{sot1}
Let ${\bf u}_0\in H$, ${\bf S}_0\in L^2(\Omega)^9$.  Then exists $\exists M>0$, such that, $\forall t \in \mathbb{R}_+$ and $\forall k \in \mathbb{N}^\ast$,

\begin{enumerate}[(i)]
\item \label{i} $|\alpha_k(t)|^2\leq M \left(|\alpha_k^0|^2+\lambda_k^{-\omega}|b_k|^2\right)$.
\item \label{ii} $\lambda_k|\alpha_k(t)|^2\leq M \left(\dfrac{|\alpha_k^0|^2}{t^{2-\delta}}+\dfrac{|b_k|^2}{t^{2-2\delta}} \right)$.
\item \label{iii} for any $\mu \in [0,1]$ and any $\tau \in [0,1]$,

$$ |\alpha_k(t)|^2\leq M \left(\dfrac{|\alpha_k^0|^2}{\lambda_k^{\mu} t^{\mu(2-\delta)}}+\dfrac{|b_k|^2}{\lambda_k^{\tau+(1-\tau)\omega} t^{2\tau(1-\delta)}} \right)$$

\item \label{iv} for any $\mu \in [0,1]$ and any $\tau \in [0,1]$,

$$\lambda_k |\rho \displaystyle \ast \alpha_k |^2(t) \leq M \left( \dfrac{|\alpha_k^0|^2}{\lambda_k^{\mu-1} t^{\mu(2-\delta)+2(\delta-1)}}+\dfrac{|b_k|^2}{\lambda_k^{-(1-\tau)(1-\omega)} t^{-2(1-\tau)(1-\delta) }} \right)$$
\end{enumerate}

\end{proposition}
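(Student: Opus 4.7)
The plan is to derive the four estimates from three ingredients: the Laplace--inversion representation \eqref{per3 2}, taken along the imaginary axis $x=0$; the pointwise bound $\rho(t)\leq Kt^{-\delta}$ recalled just before Lemma \ref{sol5}; and Lemma \ref{sol5} itself. Linearity of $\alpha_k$ in $(\alpha_k^0,b_k)$ will let me split the two contributions, and the elementary real-interpolation inequality $A\leq x^{1-\eta}y^{\eta}$, valid whenever $0\leq A\leq\min(x,y)$ and $\eta\in[0,1]$, will deliver (iii) once (i) and (ii) are in hand.

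For estimate (i) I set $x=0$ in \eqref{per3 2} and split $\alpha_k(t)$ into an $\alpha_k^0$-piece and a $b_k$-piece. The $b_k$-piece equals $\sqrt{\lambda_k}b_k/(2\pi)$ times $\int_{-\infty}^{+\infty}(T_{\lambda_k}w)(iy)e^{iyt}\,dy$, and Lemma \ref{sol5} (applied with $\mu=\lambda_k$) bounds its modulus by $M\sqrt{\lambda_k}\,|b_k|\,\lambda_k^{-1/(2-\delta)}=M|b_k|\lambda_k^{-\omega/2}$; squaring produces the $\lambda_k^{-\omega}|b_k|^2$ summand. The $\alpha_k^0$-piece requires instead a uniform (in $k,t$) bound on $\int T_{\lambda_k}(iy)e^{iyt}\,dy$. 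Since $T_{\lambda_k}(iy)$ decays only like $|y|^{-1}$ and is merely conditionally integrable on $i\mathbb{R}$, Lemma \ref{sol5} does not apply verbatim---this is the genuine obstacle. I rely on the corresponding uniform bound established in \cite{pal7}, which rests on the same kind of $|\mathrm{Im}(\mathrm{denominator})|$ lower-bound estimate as in the proof of Lemma \ref{sol5}, combined with a contour deformation restoring integrability. Estimate (ii) is quoted directly from \eqref{wf4}.

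For (iii), I interpolate coefficientwise: the coefficient of $|\alpha_k^0|^2$ is interpolated with weight $\mu$ between $M$ (from (i)) and $M\lambda_k^{-1}t^{-(2-\delta)}$ (from (ii)), giving $M\lambda_k^{-\mu}t^{-\mu(2-\delta)}$; the coefficient of $|b_k|^2$ is interpolated with weight $\tau$ between $M\lambda_k^{-\omega}$ and $M\lambda_k^{-1}t^{-(2-2\delta)}$, giving $M\lambda_k^{-\omega(1-\tau)-\tau}t^{-2\tau(1-\delta)}$, which matches the announced form since $\omega(1-\tau)+\tau=\tau+(1-\tau)\omega$. For (iv), I decompose $\alpha_k=\alpha_k^{(u)}+\alpha_k^{(s)}$ into the pieces coming from $\alpha_k^0$ and from $b_k$ respectively, use $|\rho\ast\alpha_k|^2\leq 2(|\rho\ast\alpha_k^{(u)}|^2+|\rho\ast\alpha_k^{(s)}|^2)$ together with $\rho(t-s)\leq K(t-s)^{-\delta}$, insert the pointwise bound (more precisely, its square root) coming from (iii), and apply the Beta integral
\[
\int_0^t(t-s)^{-\delta}\,s^{-a}\,ds=B(1-\delta,1-a)\,t^{1-\delta-a}\qquad(a<1),
\]
whose convergence hypothesis is met because $\mu(2-\delta)/2\leq (2-\delta)/2<1$ and $\tau(1-\delta)\leq 1-\delta<1$. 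Squaring, multiplying by $\lambda_k$, and using $1-\tau-(1-\tau)\omega=(1-\tau)(1-\omega)$ to rearrange the $b_k$-exponent, reproduces exactly the bounds announced in (iv).
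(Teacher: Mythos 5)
Your proposal follows essentially the same route as the paper: linearity of $\alpha_k$ in $(\alpha_k^0,b_k)$ plus Lemma \ref{sol5} applied to the $b_k$-piece for (i) (with the $\alpha_k^0$-piece borrowed from the uniform bound in \cite{pal7}), quotation of the bound from \cite{pal7} for (ii), coefficientwise interpolation between (i) and (ii) for (iii), and convolution against $\rho(t)\leq Kt^{-\delta}$ via the Beta integral for (iv). The only point you gloss over is one the paper itself flags: the constant in (ii) inherited from \cite{pal7} is a priori $T$-dependent, and its uniformity in $t\in\mathbb{R}_+$ is deferred to Corollary \ref{ic1}.
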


\begin{proof}
 
(i)  \\

Since $\alpha_k$ is linear with respect to $({\bf u}_0,{\bf S}_0)$, we have, by eq.(121) in Theorem 8.1 in \cite{pal7} and eq.\eqref{per3 2}: 

\beq\label{sot1 1}
|\alpha_k ({\bf u}_0,{\bf S}_0)|^2\leq 2|\alpha_k({\bf u}_0,0)|^2+2|\alpha_k(0,{\bf S}_0)|^2 \leq M \left( |\alpha_k^0|^2+\left| \int_{-\infty}^{+\infty}(T_{\lambda_k} w)(iy)e^{iyt}\ud y\sqrt{\lambda_k}b_k \right|^2\right)
\eeq

Invoking Lemma \ref{sol5} we get

\beq\label{sot1 4}
|\alpha_k({\bf u}_0,{\bf S}_0)|^2\leq M \left( |\alpha_k^0|^2 +\dfrac{|b_k|^2 \lambda_k}{\lambda_k^{2 (2-\delta)}}\right) 
\eeq

which gives (i).\\

(ii)\\

Estimate (ii) is obtainable right away from eq.(122) in Theorem 8.1 in \cite{pal7}, with $M_T$ instead of $M$.  The proof that $M$ can be chosen independently of $T$ is deferred until Corollary \ref{ic1} in Section \ref{ie}.  Hence we take here $M$ independent of $T$ and proceed further on.\\

(iii)\\

Notice first that (ii) above gives

\beq\label{sot1 5}
\left| \alpha_k ({\bf u}_0,0) \right|^2(t) \leq \dfrac{M|\alpha_k^0|^2}{\lambda_k t^{2-\delta}}
\eeq

and

\beq\label{sot1 6}
\left| \alpha_k (0,{\bf S}_0) \right|^2(t) \leq \dfrac{M| b_k |^2}{\lambda_k t^{2-2\delta}}
\eeq

Next, combining eq.\eqref{sot1 5} and eq.\eqref{sot1 4} with ${\bf S}_0={\bf 0}$ on one hand, and eq.\eqref{sot1 6} and eq.\eqref{sot1 4} with ${\bf u}_0={\bf 0}$ on the other, making further use of eq.\eqref{sot1 1} leads to estimate (iii).\\

(iv)\\

Since $0\leq \rho(t) \leq K/t^\delta$, estimate (iii) gives

\begin{eqnarray*}
\sqrt{\lambda_k} \left| \rho \displaystyle \ast \alpha_k \right|(t) & \leq & \sqrt{\lambda_k} \left( \rho \displaystyle \ast |\alpha_k| \right)(t) \\
& \leq & M \left(\dfrac{\sqrt{\lambda_k} |\alpha_k^0| }{\lambda_k^{\mu/2} t^{\mu(2-\delta)/2+\delta-1} } + \dfrac{\sqrt{\lambda_k} |b_k|}{ \lambda_k^{[\tau+\omega(1-\tau)]/2} t^{\tau(1-\delta) +\delta-1} }  \right),
\end{eqnarray*}

from which (iv) is obtained.

\end{proof}

In order to work on spaces $H_\theta$ and $D_\gamma$, we need to reformulate Proposition \ref{sot1}.   Let $[\quad]_+$ denote the positive part of a real number.

\begin{lemma}\label{sxl3}

Let ${\bf u}_0\in H$, ${\bf S}_0\in L^2(\Omega)^9$, and $0\leq\gamma\leq\theta\leq\gamma+1$.  Then $\exists M>0$, so that  $\forall t\geq0$, 

\begin{enumerate}[(i)]
\item \label{i} 

$$\lambda_k^\theta |\alpha_k(t)|^2\leq M\left( \lambda_k^\theta |\alpha_k^0|^2+\dfrac{|\lambda_k|^\gamma |b_k|^2}{t^{2(1-\delta)[(\theta-\gamma-\omega)/(1-\omega)]_+}} \right) $$

\item \label{ii}

$$\lambda_k^\gamma \left\langle \lambda_k|\rho \displaystyle \ast  \alpha_k(t)|^2 (t)\right\rangle\leq M \left(\lambda_k^\theta|\alpha_k^0|^2 t^{\delta(\theta-\gamma-\omega)/\omega}+\lambda_k^\gamma|b_k|^2 \right)  $$  

\end{enumerate}

\end{lemma}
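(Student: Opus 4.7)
My plan is to derive both estimates directly from Proposition~\ref{sot1}(iii)--(iv) by carefully choosing the parameters $\mu$ and $\tau$ so the exponents of $\lambda_k$ come out right, and then checking that the exponents of $t$ match the stated formulas. The key algebraic identity to keep in mind throughout is $2-\delta = \delta/\omega$, equivalently $\delta = \omega(2-\delta)$, which follows from the definition $\omega = \delta/(2-\delta)$.

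For part (i), I apply Proposition~\ref{sot1}(iii) with $\mu = 0$. The first term already produces $\lambda_k^\theta |\alpha_k^0|^2$ with no $t$-factor, which is exactly what is wanted. For the second term, I want to end up with $\lambda_k^\gamma|b_k|^2$ (times a $t$-factor), so I must solve $\theta - \tau - (1-\tau)\omega = \gamma$, giving $\tau = (\theta-\gamma-\omega)/(1-\omega)$. The hypothesis $\gamma \le \theta \le \gamma+1$ ensures $\tau \le 1$. When $\theta - \gamma - \omega \ge 0$, this $\tau$ lies in $[0,1]$, and the $t$-exponent $2\tau(1-\delta)$ is exactly $2(1-\delta)(\theta-\gamma-\omega)/(1-\omega)$, matching the claim. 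When $\theta - \gamma - \omega < 0$, I simply take $\tau = 0$ instead; the second term becomes $\lambda_k^{\theta-\omega}|b_k|^2$, and since the exponent $\theta-\omega-\gamma$ is negative while $\lambda_k \ge \lambda_1 > 0$, we have $\lambda_k^{\theta-\omega-\gamma} \le \lambda_1^{\theta-\omega-\gamma}$, so this is absorbed by the constant $M$ into $\lambda_k^\gamma|b_k|^2$, agreeing with $[\,\cdot\,]_+ = 0$ in the stated formula.

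For part (ii), I apply Proposition~\ref{sot1}(iv) with $\mu = 1 + \gamma - \theta$ and $\tau = 1$. The condition $\gamma \le \theta \le \gamma+1$ gives $\mu \in [0,1]$. With $\tau = 1$, the second term of Proposition~\ref{sot1}(iv) reduces immediately to $|b_k|^2$ (no $\lambda_k$ or $t$), and multiplying by $\lambda_k^\gamma$ yields exactly $\lambda_k^\gamma|b_k|^2$ as desired. For the first term, multiplying by $\lambda_k^\gamma$ produces $\lambda_k^{\gamma+1-\mu}|\alpha_k^0|^2 = \lambda_k^\theta|\alpha_k^0|^2$, and the exponent of $t$ becomes
\[
-\mu(2-\delta) - 2(\delta-1) = -(1+\gamma-\theta)(2-\delta) + 2(1-\delta) = (\theta-\gamma)(2-\delta) - \delta.
\]
Using $\delta = \omega(2-\delta)$, this equals $(\theta-\gamma-\omega)(2-\delta) = (\theta-\gamma-\omega)\delta/\omega$, which is precisely $\delta(\theta-\gamma-\omega)/\omega$, so the claim follows.

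No single step is difficult; the whole argument is essentially a calibration of the free parameters in Proposition~\ref{sot1}. The only point requiring care is the degenerate case $\theta < \gamma+\omega$ in (i), where the natural $\tau$ is negative and must be replaced by $0$, with the discrepancy swallowed by the lower bound $\lambda_k \ge \lambda_1 > 0$; this is consistent with the positive-part notation $[\,\cdot\,]_+$ in the conclusion.
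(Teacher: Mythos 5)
Your proposal is correct and follows essentially the same route as the paper: both parts are obtained from Proposition~\ref{sot1}(iii)--(iv) with the choices $\mu=0$, $\tau=(\theta-\gamma-\omega)/(1-\omega)$ for (i) and $\mu=1+\gamma-\theta$, $\tau=1$ for (ii), and the degenerate case $\theta<\gamma+\omega$ is handled exactly as in the paper (which invokes Proposition~\ref{sot1}(i), i.e.\ your $\tau=0$ specialization) by absorbing $\lambda_k^{\theta-\omega-\gamma}\leq\lambda_1^{\theta-\omega-\gamma}$ into the constant. The exponent computations, including the use of $\delta=\omega(2-\delta)$, match the paper's.
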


\begin{proof}

(i) \\

Assume $0\leq\gamma\leq\theta\leq\gamma+\omega$.  Multiplying (i) of Theorem \ref{sot1} by $\lambda_k^\theta$ leads to $\lambda_k^\theta|\alpha_k(t)|^2\leq A\left[ \lambda_k^\theta|\alpha_k^0|^2+\lambda_k^{\theta-\omega}|b_k|^2 \right]  $.  Since $\displaystyle\mathop{\min}_{k\geq1}\lambda_k>0$ and $\theta-\omega\leq\gamma $, one gets $\lambda_k^\theta|\alpha_k(t)|^2\leq M\left[ \lambda_k^\theta|\alpha_k^0|^2+\lambda_k^{\gamma}|b_k|^2 \right]  $. 

Assume now that $0\leq\gamma+\omega\leq\theta\leq \gamma+1$.  Use  part (iii) of Theorem \ref{sot1} with $\mu=0$ and $\tau=\dfrac{\theta-\gamma-\omega}{1-\omega}\in[0,1] $.  A simple calculation leads to $\tau+(1-\tau)\omega=\tau(1-\omega)+\omega=(\theta-\gamma-\omega)+\omega=\theta-\gamma$.  Henceforth:

\begin{equation*}
|\alpha_k(t)|^2 \leq M \left( |\alpha_k^0|^2+ \dfrac{|b_k|^2}{\lambda_k^{\theta-\gamma} t^{2(1-\delta)(\theta-\gamma-\omega)/(1-\omega)}} \right) \\
\end{equation*} 

which leads to (i).\\

(ii) \\

Assume $0\leq\gamma\leq\theta\leq \gamma+1$.  Letting $\mu=\gamma-\theta+1 \in [0,1]$ and $\tau=1$ in part (iv) of Lemma \ref{sot1}, gives $\mu(2-\delta)+2(\delta-1)=(\gamma-\theta)(2-\delta)+\delta=-\delta(\theta-\gamma-\omega)/\omega$.  Hence

$$ \lambda_k |\rho\displaystyle \ast \alpha_k|^2(t) \leq 
M \left( \dfrac{|\alpha_k^0|^2}{\lambda_k^{ \gamma-\theta }t^{-\delta(\theta-\gamma-\omega)/ \omega }} 
+|b_k|^2 \right)$$  

which gives (ii). 

\end{proof}

As a consequence, we have:

\begin{corollary}\label{soc}
Let $0\leq \gamma\leq\theta\leq\gamma+1$, ${\bf u}_0\in H_\theta$, ${\bf S}_0\in D_\gamma$.  Then ${\bf u}\in \mathscr{C}^{0}(\mathbb{R}^\ast_+,H_{\gamma+1})$.  In addition, 

\begin{enumerate}[(a)]
\item \label{a} whenever $0\leq \gamma\leq\theta\leq\gamma+\omega $,   ${\bf u}\in \mathscr{C}^{0}(\mathbb{R}_+,H_\theta)$, and ${\bf S}\in \mathscr{C}^{0}(\mathbb{R}^\ast_+,D_\gamma)$;  moreover, ${\bf u}(0)={\bf u}_0$.
\item  \label{b} whenever $0\leq \gamma+\omega\leq\theta $,   ${\bf u}\in \mathscr{C}^{0}(\mathbb{R}_+,H_{\gamma+\omega})$, and ${\bf S}\in \mathscr{C}^{0}(\mathbb{R}_+,D_\gamma)$; moreover, ${\bf u}(0)={\bf u}_0$ and ${\bf S}(0)={\bf S}_0$.
\end{enumerate}

In both cases, for any $t\geq0$,

\beq\label{soc1}
\|{\bf u}(t)\|_{H_\theta}\leq M \left( \|{\bf u}_0\|_{H_\theta}+\dfrac{\|{\bf S}_0\|_{D_\gamma}}{t^{(1-\delta)[(\theta-\gamma-\omega)/(1-\omega)]_+}} \right) 
\eeq

\beq\label{soc2}
\|{\bf S}(t)\|_{D_\gamma}\leq M \left( t^{\delta(\theta-\gamma-\omega)/(2\omega)}\|{\bf u}_0\|_{H_\theta}+\|{\bf S}_0\|_{D_\gamma}\right) 
\eeq

\end{corollary}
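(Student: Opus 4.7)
The plan is to decompose $\|{\bf u}(t)\|_{H_\theta}^2$ and $\|{\bf S}(t)\|_{D_\gamma}^2$ in the orthogonal bases supplied by Lemma \ref{sxl4}, apply the scalar estimates of Lemma \ref{sxl3} (supplemented by Proposition \ref{sot1} when a finer time-exponent is needed) termwise, sum the resulting series to obtain \eqref{soc1}--\eqref{soc2}, and conclude the continuity statements through a Weierstrass $M$-test / dominated-convergence argument on the partial sums.

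First I would expand ${\bf u}(t)=\sum_k\alpha_k(t){\bf w}_k$, use Lemma \ref{sxl4}(d) to write $\|{\bf u}(t)\|_{H_\theta}^2=\sum_k\lambda_k^\theta|\alpha_k(t)|^2$, and apply Lemma \ref{sxl3}(i) termwise. Summing over $k$ immediately yields \eqref{soc1}. For \eqref{soc2}, I would plug the representation \eqref{t14 3} into $\|{\bf S}(t)\|_{D_\gamma}^2$, invoke the $D_\gamma$-orthogonality of $\{{\bs \epsilon}_k+{\bs \epsilon}_k^T\}_{k\ge 1}$ from Lemma \ref{sxl4}(c) to isolate a scalar series $\sum_k\lambda_k\,|\rho\ast\alpha_k(t)|^2(1+\lambda_k^\gamma)$, apply Lemma \ref{sxl3}(ii) termwise, and bound the $W_0(t){\bf S}_0$ contribution by $|W_0(t)|\,\|{\bf S}_0\|_{D_\gamma}\le W_0(0)\,\|{\bf S}_0\|_{D_\gamma}=\|{\bf S}_0\|_{D_\gamma}$, using \eqref{wf1}.

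For continuity of ${\bf u}$ into $H_{\gamma+1}$ on $\mathbb{R}_+^\ast$, Lemma \ref{sxl3}(i) cannot be applied directly at $\theta'=\gamma+1$ since ${\bf u}_0\in H_\theta$ does not imply ${\bf u}_0\in H_{\gamma+1}$. I would instead invoke Proposition \ref{sot1}(iii) with $\mu=\gamma+1-\theta\in[0,1]$ and $\tau=1$; after multiplication by $\lambda_k^{\gamma+1}$ this produces a majorant of $\lambda_k^{\gamma+1}|\alpha_k(t)|^2$ by two terms that are summable against $\|{\bf u}_0\|_{H_\theta}^2$ and $\|{\bf S}_0\|_{D_\gamma}^2$ respectively and are locally bounded in $t>0$; Weierstrass' $M$-test and the continuity of each $\alpha_k$ (cf.\ \eqref{al:a}) deliver ${\bf u}\in\mathscr{C}^0(\mathbb{R}_+^\ast,H_{\gamma+1})$, and the analogous use of Proposition \ref{sot1}(iv) yields ${\bf S}\in\mathscr{C}^0(\mathbb{R}_+^\ast,D_\gamma)$. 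For continuity at $t=0$ in case (a), $\theta\le\gamma+\omega$ makes the positive part in Lemma \ref{sxl3}(i) vanish, producing a $t$-uniform summable majorant; combined with $\alpha_k(t)\to\alpha_k^0$ this gives ${\bf u}(t)\to{\bf u}_0$ in $H_\theta$ by dominated convergence. In case (b), $\theta\ge\gamma+\omega$, I would apply Lemma \ref{sxl3}(i) at the intermediate exponent $\gamma+\omega$ (the hypothesis $\gamma\le\gamma+\omega\le\gamma+1$ still holds); the positive part again vanishes, yielding a $t$-uniform majorant and ${\bf u}(t)\to{\bf u}_0$ in $H_{\gamma+\omega}$. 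For ${\bf S}(t)\to{\bf S}_0$ in case (b), Lemma \ref{sxl3}(ii) at $\gamma+\omega$ likewise produces a $t$-uniform dominant; coupled with $(\rho\ast\alpha_k)(t)\to 0$ as $t\to 0$ (immediate from $\rho(s)\le Ks^{-\delta}$, $\delta<1$, and the local boundedness of $\alpha_k$) and $W_0(0)=1$, this gives ${\bf S}(t)\to{\bf S}_0$ in $D_\gamma$.

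The main obstacle is the exponent bookkeeping: in case (b) one cannot use the prescribed $\theta$ to pass to the limit $t\to 0$, because the corresponding bound blows up; one must work at the intermediate value $\gamma+\omega$ where the $t$-exponent vanishes, and verify that ${\bf u}_0\in H_\theta\subset H_{\gamma+\omega}$ and ${\bf S}_0\in D_\gamma$ make the resulting majorants finite. A secondary routine point is to check that $\sum_k\lambda_k^\gamma|b_k|^2$ is controlled by $\|{\bf S}_0\|_{D_\gamma}^2$ via the orthogonality relations \eqref{sol1} and \eqref{sol4}; once this is established, every dominated-convergence argument above proceeds verbatim.
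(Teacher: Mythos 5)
Your proposal is correct and follows essentially the same route as the paper's proof: orthogonal expansion via Lemma \ref{sxl4}, termwise application of the scalar bounds of Proposition \ref{sot1} and Lemma \ref{sxl3} summed by a Weierstrass-type uniform convergence argument, and passage to the intermediate exponent $\gamma+\omega$ in case (b). The only (cosmetic) deviations are that the paper obtains the $\mathscr{C}^0(\mathbb{R}_+^\ast,H_{\gamma+1})$ step from Proposition \ref{sot1}(ii) multiplied by $\lambda_k^\gamma$ rather than from (iii) with $\mu=\gamma+1-\theta$, and identifies $\lim_{t\to0}{\bf u}(t)={\bf u}_0$ through the already known convergence in $H$ rather than by termwise dominated convergence.
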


\begin{proof}

We first prove that ${\bf u}\in \mathscr{C}^{0}\left(\mathbb{R}^\ast_+,H_{\gamma+1}\right)$.  

From (d) of Lemma \ref{sxl4} and (ii) in Proposition \ref{sot1}, we reckon that, for any $N\leq M$ and $t\in[T_1,T_2]$, where $0<T_1<T_2$,

\begin{equation}\label{soc3:a2}
\left\| \displaystyle\sum_{k=N}^{M}\alpha_k(t) {\bf w}_q \right\|^2_{H_{\gamma+1}} =  \displaystyle\sum_{k=N}^{M} |\alpha_k(t)|^2\lambda_k^{\gamma+1} 
\leq  M_T \displaystyle\sum_{k=N}^{+\infty} \left[\dfrac{\lambda_k^\gamma|\alpha_k^0|^2}{T_1^{2-\delta}} + \dfrac{\lambda_k^\gamma|b_k |^2}{T_1^{2(1-\delta)}} \right] 
\end{equation}

Since $\gamma\leq\theta$, we have that ${\bf u}_0\in H_\theta\subset H_\gamma$.  Also ${\bf S}_0\in H_\gamma$.  Hence Eq.\eqref{soc3:a2} implies that 

\beq\label{socx3}
\displaystyle\mathop{\sup}_{t\in[T_1,T_2]} \left\| \displaystyle\sum_{k=N}^{M}\alpha_k(t) {\bf w}_q \right\|_{H_{\gamma+1}} \displaystyle\mathop{\longrightarrow}_{N\to+\infty}0
\eeq

Finally, as $\alpha_k\in \mathscr{C}^{0}\left(\mathbb{R}_+\right)$, from \eqref{socx3} above we deduce that ${\bf u}=\left( \displaystyle\sum_{k=N}^{+\infty}   \alpha_k \otimes {\bf w}_q \right)\in \mathscr{C}^{0}\left(\mathbb{R}^\ast_+,H_{\gamma+1}\right)$. 

Next we proceed with the rest of the proof.\\

(a) \\

Let us prove that ${\bf S}\in \mathscr{C}^{0}(\mathbb{R}^\ast_+,D_\gamma)$.  For any $N \leq M$, by (c) of Lemma \ref{sxl4} and by (ii) of Lemma \ref{sxl3}, 

\begin{eqnarray}\label{sx:x}
\left\|\sum_{k=N}^{M}\left(\rho \ast \alpha_k \right)(t)\left(\nabla{\bf w}_k+\nabla^T{\bf w}_k \right) \right\|^2_{D_\gamma} & = & \sum_{k=N}^{M}|\rho\mathop{\ast}_{(t)}\alpha_k|^2(1+\lambda_k^\gamma)\lambda_k \nonumber\\
& \leq & K\sum_{k=N}^{+\infty} \left[ \lambda^\theta_k |\alpha_k^0|^2 t^{\delta(\theta-\gamma-\omega)/\omega}+\lambda^\gamma_k |b_k|^2 \right] 
\end{eqnarray}

Since ${\bf u}_0\in H_\theta$, and ${\bf S}_0\in D_\gamma$, then $\displaystyle \sum_{k=1}^{+\infty}\left(\rho \displaystyle \ast  \alpha_k \right)(t) \left(\nabla{\bf w}_k+\nabla^T{\bf w}_k \right) $ is uniformly convergent w.r.t. $t$ on any compact subset $[T_0,T_1]\subset\mathbb{R}_+^\ast$, in ${D_\gamma}$.  Given that $\alpha_k\in\mathscr{C}^0(\mathbb{R}_+)$, and that $\rho\in L^1_{\text{loc}}(\mathbb{R}_+)$ - and hence $(\rho\displaystyle  \ast \alpha_k )\in \mathscr{C}^0(\mathbb{R}_+)$ -, we conclude that $\displaystyle \sum_{k=1}^{+\infty}\left(\rho \ast \alpha_k \right) \otimes \left(\nabla{\bf w}_k+\nabla^T{\bf w}_k \right) \in \mathscr{C}^0([T_0,T_1], D_\gamma)$.   As $W_0\in \mathscr{C}^0(\mathbb{R}_+)$, one gets that (see \eqref{t14 3})

\beq\label{soc4}
{\bf S}=\displaystyle \sum_{k=1}^{+\infty}\left(\rho \ast \alpha_k \right) \otimes \left(\nabla{\bf w}_k+\nabla^T{\bf w}_k \right)+W_0\otimes{\bf S}_0 \in \mathscr{C}^0(\mathbb{R}_+, D_\gamma)
\eeq

The inequality Eq.\eqref{soc2} results from eq.\eqref{sx:x} by letting $N=1$, $M=+\infty$, and from the fact that $\| W_0(t){\bf S}_0 \|_{D_\gamma} \leq  \|W_0\|_{\infty}\|{\bf S}_0\|_{D_\gamma}$ for any $t\in\mathbb{R}_+$, since $W_0\in L^{\infty}(\mathbb{R}_+)\cap \mathscr{C}^0(\mathbb{R}_+)$.  In a similar way we prove that ${\bf u} \in \mathscr{C}^0(\mathbb{R}_+, H_\theta)$.  

The inequality eq.\eqref{soc1} is a consequence of (i) in Lemma \ref{sxl3} and of the fact that ${\bf u}\in\mathscr{C}^0(\mathbb{R}_+, H_\theta)$.  Finally,  as $H_{\theta}\hookrightarrow H$ and  $\displaystyle\mathop{\lim}_{t\to0}{\bf u}(t)={\bf u}_0$ in $H$ (see Theorem 8.4 in \cite{pal7}),     $\displaystyle\mathop{\lim}_{t\to0}{\bf u}(t)={\bf u}_0$ in $H_{\theta}$.  \\

(b)\\

Whenever $\theta\geq \omega+\gamma$,   ${\bf u}_0\in H_{\gamma+\omega}$ and ${\bf S}_0\in D_{\gamma}$.  The inequalities (i) and (ii) of Lemma \ref{sxl3}, for $\theta'=\omega+\gamma$ and $\gamma'=\gamma$, read

\begin{eqnarray*}
& & \lambda_k^{\theta'}|\alpha_k(t)|^2\leq M\left(\lambda_k^{\theta'}|\alpha_k^0|^2 +\lambda_k^{\gamma'}|b_k|^2\right) \\
& & \lambda_k^{\gamma'} \left( \lambda_k| \rho \ast \alpha_k|^2(t) \right) \leq M \left( \lambda_k^{\theta'}|\alpha_k^0|^2+\lambda_k^{\gamma'}|b_k|^2 \right)
\end{eqnarray*}

The proof of (i) of Lemma \ref{sxl3} entails the uniform convergence (with respect to $t$ on $[0,+\infty[$) of 

$\displaystyle \sum_{k=1}^{+\infty}\left(\rho \displaystyle \ast  \alpha_k \right)(t)\left(\nabla{\bf w}_k+\nabla^T{\bf w}_k \right)$ in $D_\gamma$.  

Therefore ${\bf S}\in \mathscr{C}^0(\mathbb{R}_+, D_\gamma)$.  Moreover, as $W_0(0)=1$, we get (see \eqref{t14 3}) ${\bf S}(0)=\displaystyle \sum_{k=1}^{+\infty}\left(\rho \ast \alpha_k \right)(0)\left(\nabla{\bf w}_k+\nabla^T{\bf w}_k \right)+W_0(0) {\bf S}_0= {\bf S}_0$.  Arguing as in (a) above one proves that: ${\bf u}\in \mathscr{C}^0(\mathbb{R}_+, H_{\gamma+\omega})$ and ${\bf u}(0)={\bf u}_0$. 

\end{proof}

In the case $\gamma\geq0$ and $\theta=\gamma+\omega$ we get the continuity of $({\bf u},{\bf S})$  at $t=0$:

\begin{theorem}[\textbf{Existence Theorem}] \label{sot2}
Let $\gamma\geq0$.  Assume that ${\bf u}_0\in H_{\gamma+\omega}$ and ${\bf S}_0\in D_{\gamma}$.  Then the system of Eqs.\eqref{fv1}-\eqref{fv2} has at least one solution $({\bf u},{\bf S})\in \left[\mathscr{C}^0\left(\mathbb{R}_+,H_{\gamma+\omega} \right)\cap \mathscr{C}^0(\mathbb{R}^\ast_+,V)\cap L^p_{\text{loc}}(\mathbb{R}_+,V) \right] \times  \mathscr{C}^0(\mathbb{R}_+,D_\gamma)  $, $p\in [1,2/(2-\delta)[$.  Moreover, $\exists A>0$, such that   $\forall t\in\mathbb{R}_+$, 

\begin{equation*}
\|{\bf u}(t)\|_{H_{\gamma+\omega}}+\|{\bf S}(t)\|_{D_{\gamma}}\leq A\left(\|{\bf u}_0\|_{H_{\gamma+\omega}}+\|{\bf S}_0\|_{D_{\gamma}} \right)
\end{equation*}

\end{theorem}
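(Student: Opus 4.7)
The overall plan is to read the statement off Corollary \ref{soc} by specializing to the boundary case $\theta = \gamma + \omega$, using Theorem \ref{ut1} to anchor the existence and Proposition \ref{sot1} to extract the $L^p_{\text{loc}}(\mathbb{R}_+,V)$ integrability. Since ${\bf u}_0 \in H_{\gamma+\omega} \hookrightarrow H$ and ${\bf S}_0 \in D_\gamma \hookrightarrow L^2(\Omega)^9$, Theorem \ref{ut1} already produces a solution $({\bf u},{\bf S})$ to the weak system \eqref{fv1}-\eqref{fv2}. With $\theta := \gamma + \omega$ the hypothesis $0 \leq \gamma + \omega \leq \theta$ of Corollary \ref{soc}(b) is met (with equality), which yields the continuity ${\bf u} \in \mathscr{C}^0(\mathbb{R}_+, H_{\gamma+\omega})$, ${\bf S} \in \mathscr{C}^0(\mathbb{R}_+, D_\gamma)$, together with ${\bf u}(0)={\bf u}_0$ and ${\bf S}(0)={\bf S}_0$. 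The continuity ${\bf u} \in \mathscr{C}^0(\mathbb{R}_+^\ast, V)$ will then come from the first assertion of Corollary \ref{soc} (${\bf u} \in \mathscr{C}^0(\mathbb{R}_+^\ast, H_{\gamma+1})$) combined with the continuous injection $H_{\gamma+1} \hookrightarrow H_1 = V$, valid for all $\gamma \geq 0$.

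For the $L^p_{\text{loc}}(\mathbb{R}_+, V)$ assertion I would decompose ${\bf u}(t) = \sum_k \alpha_k(t){\bf w}_k$ in the Stokes basis and combine (d) of Lemma \ref{sxl4} with estimate (ii) of Proposition \ref{sot1} to get
\[
\|{\bf u}(t)\|_V^2 = \sum_{k=1}^{+\infty} \lambda_k |\alpha_k(t)|^2 \leq M \left(\frac{\|{\bf u}_0\|_H^2}{t^{2-\delta}} + \frac{\|{\bf S}_0\|_{L^2(\Omega)^9}^2}{t^{2-2\delta}} \right),
\]
using $\sum_k |\alpha_k^0|^2 = \|{\bf u}_0\|_H^2$ and $\sum_k |b_k|^2 \leq \|{\bf S}_0\|_{L^2(\Omega)^9}^2$. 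Since $0 < \delta < 1$ the dominant singularity near $t = 0$ is $t^{-(1-\delta/2)}$, which is $p$-th power integrable on any interval $[0,T]$ exactly when $p(1-\delta/2) < 1$, i.e. $p < 2/(2-\delta)$, matching the claimed range.

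The quantitative bound follows by observing that specializing $\theta = \gamma + \omega$ in inequalities \eqref{soc1}-\eqref{soc2} kills the time-dependent factors: $[(\theta-\gamma-\omega)/(1-\omega)]_+$ and $\delta(\theta-\gamma-\omega)/(2\omega)$ both vanish, so Corollary \ref{soc} directly delivers $\|{\bf u}(t)\|_{H_{\gamma+\omega}} + \|{\bf S}(t)\|_{D_\gamma} \leq A(\|{\bf u}_0\|_{H_{\gamma+\omega}} + \|{\bf S}_0\|_{D_\gamma})$ uniformly in $t \geq 0$. The main obstacle, really one of bookkeeping rather than analysis, is confirming that the critical $L^p$ exponent $2/(2-\delta)$ produced by the ${\bf u}_0$-driven blow-up $t^{-(1-\delta/2)}$ indeed dominates the gentler ${\bf S}_0$-driven rate $t^{-(1-\delta)}$, and that the two boundary equalities in \eqref{soc1}-\eqref{soc2} hold simultaneously precisely at $\theta = \gamma + \omega$; once those index matchings are verified, the theorem is an immediate corollary of the earlier machinery.
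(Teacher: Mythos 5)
Your proof is correct and takes essentially the same route as the paper, which states Theorem \ref{sot2} as the direct specialization of Corollary \ref{soc} to $\theta=\gamma+\omega$ (where the time exponents in \eqref{soc1}--\eqref{soc2} vanish), anchored on the existence result of Theorem \ref{ut1}. The one detail the paper leaves implicit, the $L^p_{\text{loc}}(\mathbb{R}_+,V)$ membership for $p<2/(2-\delta)$, you extract correctly from Lemma \ref{sxl4}(d) and Proposition \ref{sot1}(ii), using the normalization $\sum_k|b_k|^2\leq\|{\bf S}_0\|^2_{L^2(\Omega)^9}$ (i.e.\ $b_k=\langle{\bf S}_0|{\bs\epsilon}_k\rangle_{L^2(\Omega)^9}$), which is the convention the paper's own $D_\gamma$ estimates require.
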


The statement (a) of Corollary \ref{soc} says that, for any $0\leq \gamma \leq \theta \leq \gamma+\omega  $, ${\bf u}_0\in H_\theta$, ${\bf S}_0\in D_\gamma$, and ${\bf S}\in \mathscr{C}^0(\mathbb{R}^\ast_+,H_\gamma)$.  This does not ensure continuity at $t=0$.  Nevertheless,  for $0\leq \gamma \leq \theta \leq \gamma+\omega $,  $\theta\geq\omega$ and still holding on the assumptions ${\bf u}_0\in H_\theta$ and ${\bf S}_0\in D_\gamma$, we have ${\bf S}_0\in D_{\theta-\omega}$.  Therefore (see Theorem \ref{sot2})  ${\bf S}\in \mathscr{C}^0(\mathbb{R}_+,D_{\theta-\omega})$, and ${\bf u}\in \mathscr{C}^0(\mathbb{R}_+,D_{\theta})$.

From now on we shall focus on the case $0 \leq \gamma \leq \theta \leq \omega < 1$.  Proceeding as previously we get:

\begin{lemma}\label{sxl5}
Assume that $0\leq \gamma \leq \theta \leq \omega \leq 1$, ${\bf u}_0\in H_\theta$ and ${\bf S}_0\in D_\gamma$.  Then $\exists M>0$, such that for any $t\geq0$, 

\beq\label{sxl51}
\lambda_k^\theta \left| \alpha_k(t) \right|^2 \leq M \left(\lambda_k^\theta \left| \alpha_k^0 \right|^2+\lambda_k^{\theta-\omega }\left| b_k \right|^2  \right) 
\eeq

and 

\beq\label{sxl52}
\lambda_k^{\theta-\omega}(\lambda_k|\rho\displaystyle  \ast \alpha_k|^2(t))\leq M \left[ \lambda_k^{\theta}|\alpha^0_k|^2+\lambda_k^{\theta-\omega}|b_k|^2 \right] ) 
\eeq

\end{lemma}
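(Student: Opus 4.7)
The plan is to obtain both bounds by invoking Proposition \ref{sot1} with suitably chosen parameters; no new convolution analysis is needed.

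For \eqref{sxl51}, the observation is that Proposition \ref{sot1}(i) is time--independent. Multiplying the inequality
\[
|\alpha_k(t)|^2\leq M\left(|\alpha_k^0|^2+\lambda_k^{-\omega}|b_k|^2\right)
\]
by $\lambda_k^{\theta}$ yields $\lambda_k^{\theta}|\alpha_k(t)|^2\leq M\bigl(\lambda_k^{\theta}|\alpha_k^0|^2+\lambda_k^{\theta-\omega}|b_k|^2\bigr)$, which is precisely \eqref{sxl51}. The hypothesis $\theta\leq\omega$ is not used for this step; it only makes the inequality nontrivial (since then $\lambda_k^{\theta-\omega}\leq 1$, sharper than what Lemma \ref{sxl3}(i) would provide).

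For \eqref{sxl52}, the idea is to specialize Proposition \ref{sot1}(iv) to parameters $(\mu,\tau)$ that make both time exponents vanish. The candidate is $\mu=1-\omega$ and $\tau=1$. With this choice, $\mu\in[0,1]$ since $0<\omega<1$, and $\tau\in[0,1]$ trivially. The exponent of $t$ attached to the $|\alpha_k^0|^2$ term is
\[
-\mu(2-\delta)-2(\delta-1)=-(1-\omega)(2-\delta)+2(1-\delta),
\]
which, upon using the identity $\omega(2-\delta)=\delta$ (this is the definition $\omega=\delta/(2-\delta)$), collapses to $0$; similarly the exponent $-2(1-\tau)(1-\delta)$ of $t$ in the $|b_k|^2$ term is $0$. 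The $\lambda_k$-exponents work out identically: $\lambda_k^{1-\mu}=\lambda_k^{\omega}$ in the first summand and $\lambda_k^{(1-\tau)(1-\omega)}=\lambda_k^{0}=1$ in the second, so Proposition \ref{sot1}(iv) becomes
\[
\lambda_k\,|\rho\ast\alpha_k|^2(t)\leq M\bigl(\lambda_k^{\omega}|\alpha_k^0|^2+|b_k|^2\bigr).
\]
Multiplying by $\lambda_k^{\theta-\omega}$ yields $\lambda_k^{\theta-\omega}\bigl(\lambda_k|\rho\ast\alpha_k|^2(t)\bigr)\leq M\bigl(\lambda_k^{\theta}|\alpha_k^0|^2+\lambda_k^{\theta-\omega}|b_k|^2\bigr)$, which is \eqref{sxl52}.

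The only nontrivial matter is the algebraic verification that the two chosen values of $(\mu,\tau)$ simultaneously kill both time exponents; this reduces to the single identity $\omega(2-\delta)=\delta$, so there is no real obstacle. At $t=0$, $(\rho\ast\alpha_k)(0)=0$, so \eqref{sxl52} holds trivially, and \eqref{sxl51} is an immediate consequence of the time-independent bound (i) combined with $\alpha_k(0)=\alpha_k^0$. Thus both estimates extend to all $t\geq 0$, with $M$ independent of $k$ and $t$.
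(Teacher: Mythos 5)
Your proof is correct and follows essentially the same route as the paper: \eqref{sxl51} is Proposition \ref{sot1}(i) multiplied by $\lambda_k^{\theta}$, and \eqref{sxl52} comes from Proposition \ref{sot1}(iv) with exactly the paper's choice $\mu=1-\omega$, $\tau=1$, the identity $\omega(2-\delta)=\delta$ killing the time exponents, followed by multiplication by $\lambda_k^{\theta-\omega}$.
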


\begin{proof}

Eq.\eqref{sxl51} follows from part (i) in Proposition \ref{sot1}.  Next, we use part (iv) in Lemma \ref{sot1} with $\mu=1-\omega$ and $\tau=1$.  It gives $\mu-1=-\omega$ and $\mu(2-\delta)+2(\delta-1)=[1-\delta/(2-\delta)](2-\delta)+2\delta-2=0$.  Henceforth,

\begin{equation*}
\lambda_k (|\rho\displaystyle  \ast \alpha_k|^2(t))\leq M \left(\lambda_k^\omega|\alpha^0_k|^2+ |b_k|^2 \right)
\end{equation*}

which ends the proof.

\end{proof}

Hence:

\begin{corollary}\label{soc5}

Assume $0\leq \gamma \leq \theta $, ${\bf u}_0\in H_\theta$ and ${\bf S}_0\in D_\gamma$.

\begin{enumerate}[(a)]
\item \label{a} whenever $\theta\geq\omega$, ${\bf u}\in\mathscr{C}^0(\mathbb{R}_+,H_\theta)$, and ${\bf S}\in\mathscr{C}^0(\mathbb{R}_+,D_{\theta-\omega})$.  Moreover, $\exists A>0$ s.t. $\forall t\geq0$: 

\beq\label{syc51}
\left\|{\bf u}(t)\right\|_{H_\theta}+\left\|{\bf S}(t)\right\|_{D_{\theta-\omega}} \leq A\left(\left\|{\bf u}_0\right\|_{H_\theta}+\left\|{\bf S}_0\right\|_{D_{\theta-\omega}}  \right)
\eeq

\item \label{b} whenever $\theta \leq  \omega $, ${\bf u}\in\mathscr{C}^0(\mathbb{R}_+,H_\theta)$, and ${\bf S}\in\mathscr{C}^0(\mathbb{R}_+,\Delta_{\theta-\omega})$.  Moreover, $\exists A>0$ s.t. $\forall t\geq0$,

\beq\label{syc52}
\left\|{\bf u}(t)\right\|_{H_\theta}+\left\|{\bf S}(t)\right\|_{\Delta_{\theta-\omega}} \leq A\left(\left\|{\bf u}_0\right\|_{H_\theta}+\left\|{\bf S}_0\right\|_{\Delta_{\theta-\omega}}  \right)
\eeq
\end{enumerate}

\end{corollary}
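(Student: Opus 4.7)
The plan is to mirror the strategy used in Corollary \ref{soc} and Theorem \ref{sot2}: work coefficient-by-coefficient on the modal expansions ${\bf u}=\sum_k\alpha_k{\bf w}_k$ and (via \eqref{t14 3}) ${\bf S}=\sum_k(\rho\ast\alpha_k)(\nabla{\bf w}_k+\nabla^T{\bf w}_k)+W_0\otimes{\bf S}_0$, feed in the pointwise bounds from Lemma \ref{sxl5}, and then identify the resulting numerical series with squared $H_\theta$, $D_{\theta-\omega}$ or $\Delta_{\theta-\omega}$ norms using the orthogonality statements of Lemma \ref{sxl4}.

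For part (a), with $\theta\geq\omega$, the pair $(\theta,\gamma)=(\theta,\theta-\omega)$ lies in the regime $0\leq\gamma\leq\theta\leq\gamma+1$ covered by Lemma \ref{sxl3}, whose inequalities reduce to those of Lemma \ref{sxl5} since $\theta-\gamma-\omega=0$. Summing the first estimate weighted by $\lambda_k^\theta$ and invoking parts (d)-(e) of Lemma \ref{sxl4} gives $\|{\bf u}(t)\|_{H_\theta}^2\leq M(\|{\bf u}_0\|_{H_\theta}^2+\|{\bf S}_0\|_{D_{\theta-\omega}}^2)$. For the stress, the identity $\|\nabla{\bf w}_k+\nabla^T{\bf w}_k\|_{D_{\theta-\omega}}^2=\lambda_k(1+\lambda_k^{\theta-\omega})\leq C\lambda_k^{\theta-\omega+1}$ (using $\theta-\omega\geq 0$ and $\lambda_k\geq\lambda_1$), combined with Lemma \ref{sxl4}(c) and \eqref{sxl52}, gives the corresponding bound on $\|{\bf S}(t)\|_{D_{\theta-\omega}}$; the contribution of $W_0\otimes{\bf S}_0$ is absorbed thanks to $\|W_0\|_\infty<+\infty$. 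Uniform convergence of the partial sums on each $[0,T]$ provides continuity on $\mathbb{R}_+$, and the initial values follow from $\alpha_k(0)=\alpha_k^0$, $(\rho\ast\alpha_k)(0)=0$ and $W_0(0)=1$.

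For part (b), with $\theta\leq\omega$, I apply Lemma \ref{sxl5} directly; the difference is that $\Delta_{\theta-\omega}$ is now of negative index. Decomposing ${\bf S}_0=\sum_k A_k\bs{\epsilon}_k+\sum_k B_k\bs{\epsilon}_k^T+P({\bf S}_0)$, one rewrites
\[
{\bf S}(t)=\sum_k\bigl[\sqrt{\lambda_k}(\rho\ast\alpha_k)(t)+W_0(t)A_k\bigr]\bs{\epsilon}_k+\sum_k\bigl[\sqrt{\lambda_k}(\rho\ast\alpha_k)(t)+W_0(t)B_k\bigr]\bs{\epsilon}_k^T+W_0(t)P({\bf S}_0).
\]
By \eqref{sow1} and Lemma \ref{sxl4}(e), the squared $\Delta_{\theta-\omega}$-norm splits into a piece controlled by $\sum_k\lambda_k^{\theta-\omega+1}|\rho\ast\alpha_k|^2$ (handled by \eqref{sxl52}) and a piece bounded by $|W_0(t)|^2\|{\bf S}_0\|_{\Delta_{\theta-\omega}}^2$; both \eqref{syc51} and \eqref{syc52} follow with a constant uniform in $t$. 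Continuity at $t=0$ once again reduces to $(\rho\ast\alpha_k)(0)=0$ and $W_0(0)=1$, while ${\bf u}(0)={\bf u}_0$ comes directly from $\alpha_k(0)=\alpha_k^0$.

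The main obstacle, I expect, lies in case (b): since $\theta-\omega<0$, $\Delta_{\theta-\omega}=(\Delta_{\omega-\theta})'$ is a completion of $L^2(\Omega)^9$ rather than a subspace of it, so one has to justify the coefficient-level identity for the squared norm through the density statements following \eqref{sow2} and through the dual-norm computation of Lemma \ref{sxl4}(e). One must also check that the partial sums of $\sum_k(\rho\ast\alpha_k)(\nabla{\bf w}_k+\nabla^T{\bf w}_k)$ form a Cauchy sequence in $\mathscr{C}^0([0,T],\Delta_{\theta-\omega})$: this rests on the fact that $\lambda_k^{\theta-\omega+1}$, the exact weight arising as the squared $\Delta_{\theta-\omega}$-norm of $\nabla{\bf w}_k+\nabla^T{\bf w}_k$, is precisely the quantity controlled by \eqref{sxl52}, so the tail of the series is dominated by a tail of $\sum_k\lambda_k^\theta|\alpha_k^0|^2+\sum_k\lambda_k^{\theta-\omega}|b_k|^2$, which are finite by the hypotheses on $({\bf u}_0,{\bf S}_0)$.
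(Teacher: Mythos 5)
Your proposal is correct and takes essentially the same route as the paper: part (a) is there a one-line reduction to Theorem \ref{sot2} applied with the shifted index $\theta-\omega$ in place of $\gamma$ (which is exactly the modal computation you unpack), and part (b) is likewise proved by combining Lemma \ref{sxl5} with the orthogonality/duality statement (e) of Lemma \ref{sxl4} to control the tail of the series for ${\bf S}-W_0\otimes{\bf S}_0$ in $\Delta_{\theta-\omega}$, the term $W_0\otimes{\bf S}_0$ being absorbed via $D_\gamma\hookrightarrow L^2(\Omega)^9\hookrightarrow\Delta_{\theta-\omega}$ and $W_0\in L^\infty(\mathbb{R}_+)$. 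The weight $\lambda_k^{\theta-\omega+1}$ you identify as $\tfrac12\|\nabla{\bf w}_k+\nabla^T{\bf w}_k\|^2_{\Delta_{\theta-\omega}}$ is precisely the quantity the paper matches against \eqref{sxl52}, so no new idea is missing.
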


\begin{proof}

(a)\\

The proof is a direct consequence of the discussion preceding Lemma \ref{sxl5}.\\

(b)\\

That  ${\bf u}\in\mathscr{C}^0(\mathbb{R}_+,H_\theta)$ is a consequence of part (a) of Corollary \ref{soc}.  Next, Lemma \ref{sxl5} and part (e) of Lemma \ref{sxl4} imply that, for any $M\leq N$,

\begin{eqnarray}\label{syc53}
\left\| \displaystyle \sum_{k=M}^{N} \left( \rho\displaystyle  \ast \alpha_k \right)(t)\left(\nabla{\bf w}_k+\nabla^T{\bf w}_k \right) \right\|^2_{\Delta_{\theta-\omega}} & = & 2 \displaystyle \sum_{k=M}^{N} \lambda_k^{\theta-\omega} \left( \lambda_k |\rho\displaystyle \mathop{\ast}_{(t)}\alpha_k|^2 \right) \nonumber\\
& \leq & M \displaystyle \sum_{k=M}^{N} \left( \lambda_k^\theta|\alpha^0_k|^2+\lambda_k^{\theta-\omega} |b_k|^2 \right)
\end{eqnarray}

Since ${\bf u}_0\in H_\theta$, ${\bf S}_0\in D_\gamma \hookrightarrow L^2(\Omega)^9 \hookrightarrow \Delta_{\theta-\omega}$, we get by Eq.\eqref{syc53} and Eq.\eqref{t14 3} that

\beq\label{syc54}
\left( {\bf S}-W_0 \otimes {\bf S}_0 \right) \in \mathscr{C}^0(\mathbb{R}_+,\Delta_{\theta-\omega}) 
\eeq

Moreover, $W_0\in \mathscr{C}^0(\mathbb{R}_+)$ and ${\bf S}_0 \in D_\gamma \hookrightarrow \Delta_{\theta-\omega}$.  Therefore, by Eq.\eqref{syc54}, ${\bf S}\in\mathscr{C}^0(\mathbb{R}_+,\Delta_{\theta-\omega})$.  Inequality Eq.\eqref{syc52} follows right away after invoking Eq.\ref{sxl51}, Eq.\eqref{t14 3}, Eq.\eqref{syc53} with $M=1$ and $N=+\infty$, and that $W_0\in L^\infty(\mathbb{R}_+)$ and ${\bf S}_0\in \Delta_{\theta-\omega}$.

\end{proof}

\begin{remark}\label{rksc}
Using part (b) in Corollary \ref{soc5} and by a density  argument one may prove that, for ${\bf u}_0\in H$ and ${\bf S}_0\in \Delta_{-\omega}$, the system of equations \eqref{bvp1} has a weak solution $({\bf u},{\bf S})\in  \mathscr{C}^0(\mathbb{R}_+,H)\times\mathscr{C}^0(\mathbb{R}_+,\Delta_{-\omega})$.  Of course the integrals have to be replaced by inner product functionals.

\end{remark}

\begin{corollary}\label{soc6}
Assume $0\leq \gamma \leq \theta \leq \omega \leq 1$, ${\bf u}_0\in H_\theta$ and ${\bf S}_0\in D_\gamma$.  Then ${\bf u}\in\mathscr{C}^0(\mathbb{R}_+,H_\theta)$, and ${\bf S}\in\mathscr{C}^0(\mathbb{R}_+,H^{-1}(\Omega)^9)$.  Moreover ${\bf u}(0)={\bf u}_0$, ${\bf S}(0)={\bf S}_0$.
\end{corollary}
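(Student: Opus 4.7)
The plan is to layer three facts: Corollary \ref{soc5}(b) supplies continuity in the fine $\Delta$-scale, the functional-analytic set-up of section \ref{so} lowers that to $H^{-1}(\Omega)^9$, and the representation formula \eqref{t14 3} pins down the initial values.

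First, I would invoke Corollary \ref{soc5}(b). The hypothesis $\theta \leq \omega$ is exactly the one in that item, and ${\bf S}_0 \in D_\gamma \hookrightarrow L^2(\Omega)^9 = \Delta_0 \hookrightarrow \Delta_{\theta-\omega}$ since $\theta - \omega \leq 0$. Applying it yields both ${\bf u} \in \mathscr{C}^0(\mathbb{R}_+, H_\theta)$ and ${\bf S} \in \mathscr{C}^0(\mathbb{R}_+, \Delta_{\theta-\omega})$. The equality ${\bf u}(0) = {\bf u}_0$ comes for free either from part (a) of Corollary \ref{soc} applied with $\gamma = 0$, or by combining the $H_\theta$-continuity with the $H$-continuity and initial value furnished by Theorem \ref{ut1}, using the embedding $H_\theta \hookrightarrow H$ together with uniqueness of limits.

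Second, I would construct a continuous embedding $\Delta_{\theta-\omega} \hookrightarrow H^{-1}(\Omega)^9$. Since $\omega = \delta/(2-\delta) < 1$, we have $\theta - \omega \geq -\omega > -1$, so the dense chain of injections developed just after Eq.\eqref{sow2} gives $\Delta_{\theta-\omega} \hookrightarrow \Delta_{-1} = (\Delta_1)'$ continuously. The restriction map $r: (\Delta_1)' \to H^{-1}(\Omega)^9$, $T \mapsto T|_{H_0^1(\Omega)^9}$, is continuous by the discussion immediately following Eq.\eqref{so3 3}. Composing these two maps produces the desired continuous embedding, and hence ${\bf S} \in \mathscr{C}^0(\mathbb{R}_+, H^{-1}(\Omega)^9)$.

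Third, I would identify ${\bf S}(0) = {\bf S}_0$ directly from the representation formula \eqref{t14 3},
$$
{\bf S}(t) = \sum_{k=1}^{+\infty} (\rho \ast \alpha_k)(t) \otimes (\nabla {\bf w}_k + \nabla^T {\bf w}_k) + W_0(t) \otimes {\bf S}_0.
$$
Because $\rho \in L^1_{\text{loc}}(\mathbb{R}_+)$ and each $\alpha_k \in \mathscr{C}^0(\mathbb{R}_+)$, every convolution satisfies $(\rho \ast \alpha_k)(0) = 0$. The bound Eq.\eqref{syc53} produced inside the proof of Corollary \ref{soc5}(b) is uniform in $t \in \mathbb{R}_+$ (not merely on $\mathbb{R}_+^\ast$), so the full series vanishes at $t = 0$ in $\Delta_{\theta-\omega}$. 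Since $W_0(0) = 1$ (already exploited in the proof of Corollary \ref{soc}(b)), this forces ${\bf S}(0) = {\bf S}_0$ in $\Delta_{\theta-\omega}$ and therefore in $H^{-1}(\Omega)^9$ via the second step. The main obstacle is precisely that second step: it requires leaving the spectral scale on which the whole paper was built and landing in a classical Sobolev dual, and the coherence of the two scales only holds because $\omega < 1$ keeps the relevant index strictly above $-1$, so that the duality $\Delta_{-1} = (\Delta_1)'$ can be combined with the concrete embedding $H_0^1(\Omega)^9 \hookrightarrow \Delta_1$ from Eq.\eqref{so3 3}. The rest is routine bookkeeping on \eqref{t14 3}.
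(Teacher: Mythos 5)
Your proposal is correct and follows essentially the same route as the paper: invoke Corollary \ref{soc5}(b) for continuity in $H_\theta$ and $\Delta_{\theta-\omega}$, then compose the injections $\Delta_{\theta-\omega}\hookrightarrow\Delta_{-1}\stackrel{r}{\rightarrow}H^{-1}(\Omega)^9$ from Section \ref{so} to land in $H^{-1}(\Omega)^9$. The paper leaves the initial-value identifications as "follows right away"; your explicit verification via \eqref{t14 3}, $W_0(0)=1$ and $(\rho\ast\alpha_k)(0)=0$ is exactly the argument the paper itself uses in the proof of Corollary \ref{soc}(b).
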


\begin{proof}
Corollary \ref{soc5} states that ${\bf S}\in\mathscr{C}^0(\mathbb{R}_+,\Delta_{\theta-\omega})$.  Therefore (see Section \ref{so}), the mapping of $\mathbb{R}_+$ into $H^{-1}(\Omega)^9$ defined by $\mathbb{R}_+ \displaystyle\stackrel{{\bf S}}{\rightarrow}\Delta_{\theta-\omega} \displaystyle\stackrel{i}{\hookrightarrow}\Delta_{-1} \displaystyle\stackrel{r}{\rightarrow}H^{-1}(\Omega)^9$, is continuous; the Corollary statement follows right away.

\end{proof}

Hence, to the first existence and uniqueness theorem, we can add the following conclusion: ${\bf S}\in\mathscr{C}^0(\mathbb{R}_+,\Delta_{-\omega})\hookrightarrow \mathscr{C}^0(\mathbb{R}_+,H^{-1}(\Omega)^9)$ and ${\bf S}(0)={\bf S}_0$.

We now give a second existence and uniqueness Theorem in $H_{\gamma+\omega}\times D_\gamma$ spaces.

%%%%%%%%%%%%%%%%%%%%%%%%%%%%%%%%%%%%%%%%%
%%%%%%%%%%%%%%%%%%%%%%%%%%%%%%%%%%%%%%%%%%%%%

\begin{theorem}[\textbf{Second Existence and Uniqueness Theorem}]\label{ut2}

Let $\gamma\geq0$.  Assume that ${\bf u}_0 \in H_{\gamma+\omega}$, ${\bf S}_0 \in D_{\gamma }$.  Then the boundary value problem given by the system of equations \eqref{fv1}-\eqref{fv2} has a unique solution 

\beq\label{ut21}
({\bf u},{\bf S}) \in \left[ \mathscr{C}^0 (\mathbb{R}_+,H_{\gamma+\omega}) \cap \mathscr{C}^0_b (\mathbb{R}_+^\ast,V)\cap L^1_{\text{loc}}(\mathbb{R}_+ ,V) \right] \times  \left[ \mathscr{C}^0  (\mathbb{R}_+,D_\gamma) \cap L^\infty(\mathbb{R}_+,D_\gamma) \right]
\eeq

Moreover, there exists $A>0$, independent of ${\bf u}$, such that, for any $t\geq0$,

\beq\label{ut22}
\|{\bf u}(t)\|_{H_{\gamma+\omega}}+\|{\bf S}(t)\|_{D_{\gamma}} \leq A \left( \|{\bf u}_0\|_{H_{\gamma+\omega}}+\|{\bf S}_0\|_{D_{\gamma}} \right)
\eeq

Lastly, ${\bf u}(0)={\bf u}_0$, ${\bf S}(0)={\bf S}_0$. 

\end{theorem}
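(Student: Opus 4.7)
The plan is to assemble the statement from the two preceding existence theorems. Theorem \ref{sot2} yields existence with the correct behavior near $t=0$ in the high-regularity spaces $H_{\gamma+\omega}\times D_\gamma$; Theorem \ref{ut1} provides uniqueness in a larger functional class and guarantees bounded behavior as $t\to+\infty$. Combining them produces Theorem \ref{ut2}.

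For existence, I would first invoke Theorem \ref{sot2} to obtain a solution $({\bf u},{\bf S})\in[\mathscr{C}^0(\mathbb{R}_+,H_{\gamma+\omega})\cap\mathscr{C}^0(\mathbb{R}_+^\ast,V)\cap L^p_{\text{loc}}(\mathbb{R}_+,V)]\times\mathscr{C}^0(\mathbb{R}_+,D_\gamma)$ for some $p\in[1,2/(2-\delta))$, which automatically lies in $L^1_{\text{loc}}(\mathbb{R}_+,V)$, satisfies ${\bf u}(0)={\bf u}_0$, and obeys the a priori estimate \eqref{ut22}. That estimate gives $\|{\bf S}(t)\|_{D_\gamma}$ uniformly controlled for all $t\geq 0$, hence ${\bf S}\in L^\infty(\mathbb{R}_+,D_\gamma)$. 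The equality ${\bf S}(0)={\bf S}_0$ is Corollary \ref{soc}(b) applied at $\theta=\gamma+\omega$.

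The one nontrivial regularity step is upgrading $\mathscr{C}^0(\mathbb{R}_+^\ast,V)$ to $\mathscr{C}^0_b(\mathbb{R}_+^\ast,V)$. I would show that the solution just obtained actually falls in the class $\mathscr{F}$ of Corollary \ref{uc1}: the inclusions $H_{\gamma+\omega}\hookrightarrow H$ and $D_\gamma\hookrightarrow L^2(\Omega)^9$ take care of the continuity requirements; $\|{\bf S}(\cdot)\|_{L^2}\leq C\|{\bf S}(\cdot)\|_{D_\gamma}$ is bounded and hence lies in $\mathscr{L}$; and for $\|\nabla{\bf u}(\cdot)\|_{L^2}$ I would reuse the spectral computation of the proof of Theorem \ref{ut1}, namely inserting Proposition \ref{sot1}(ii) in $\|{\bf u}\|_V^2=\sum_k\lambda_k|\alpha_k|^2$ to reproduce the decay \eqref{uta5} for $t\geq 1$. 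Then $\|\nabla{\bf u}\|_{L^2}$ is locally integrable and bounded at infinity, thus in $\mathscr{L}$. Corollary \ref{uc1} forces the Theorem \ref{sot2} solution to coincide with the one supplied by Theorem \ref{ut1}, and it inherits the property ${\bf u}\in\mathscr{C}^0_b(\mathbb{R}_+^\ast,V)$.

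Uniqueness follows by the same embedding argument: the class \eqref{ut21} lies inside $\mathscr{F}$, since the $L^\infty$ bound in $D_\gamma$ gives $\|{\bf S}\|_{L^2}\in\mathscr{L}$ and the $\mathscr{C}^0_b(\mathbb{R}_+^\ast,V)\cap L^1_{\text{loc}}(\mathbb{R}_+,V)$ control gives $\|\nabla{\bf u}\|_{L^2}\in\mathscr{L}$, so Corollary \ref{uc1} directly applies. The principal obstacle in the whole argument is the previous paragraph: Theorem \ref{sot2} only supplies continuity of $\|{\bf u}\|_V$ on $\mathbb{R}_+^\ast$ and local integrability, and one must independently extract the polynomial decay at infinity from Proposition \ref{sot1} to place the solution inside $\mathscr{F}$ and thereby inherit the $\mathscr{C}^0_b$ property from Theorem \ref{ut1}; everything else is a bookkeeping of embeddings.
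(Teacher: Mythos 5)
Your proposal is correct and follows essentially the same route as the paper: existence, continuity at $t=0$, and the estimate \eqref{ut22} from Theorem \ref{sot2}, the $L^\infty(\mathbb{R}_+,D_\gamma)$ bound from that estimate, the $\mathscr{C}^0_b(\mathbb{R}_+^\ast,V)$ property from the decay of $\|{\bf u}(t)\|_V$ established in Theorem \ref{ut1}, and uniqueness from the inclusion of the class \eqref{ut21} in $\mathscr{F}$ together with Corollary \ref{uc1}. The only (harmless) difference is that you make explicit, via Corollary \ref{uc1}, the identification of the Theorem \ref{sot2} solution with the Theorem \ref{ut1} solution in order to inherit the decay at infinity, a step the paper leaves implicit.
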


\begin{proof}
The solution uniqueness is a consequence of the following inclusions: $H_{\gamma+\omega} \hookrightarrow H$, $D_{\gamma} \hookrightarrow L^2(\Omega)^9$, $\left[ \mathscr{C}^0 (\mathbb{R}_+,H_{\gamma+\omega}) \cap \mathscr{C}^0_b (\mathbb{R}_+^\ast,V)\cap L^1_{\text{loc}}(\mathbb{R}_+ ,V) \right] \times  \left[ \mathscr{C}^0  (\mathbb{R}_+,D_\gamma) \cap L^\infty(\mathbb{R}_+,D_\gamma) \right]  \subset \mathscr{F}$, and of Corollary  \ref{uc1}.  

The existence of a solution $({\bf u},{\bf S}) \in \left[ \mathscr{C}^0 (\mathbb{R}_+,H_{\gamma+\omega}) \cap \mathscr{C}^0  (\mathbb{R}_+^\ast,V)\cap L^1_{\text{loc}}(\mathbb{R}_+ ,V) \right] \times  \left[ \mathscr{C}^0  (\mathbb{R}_+,D_\gamma) \right]$ follows from Theorem \ref{sot2}.  In addition, the last estimate in Theorem \ref{sot2} grants that ${\bf S}\in L^\infty(\mathbb{R}_+,D_\gamma)$.  Next, that $\|{\bf u}(t)\|_V\displaystyle \mathop{\longrightarrow}_{t\to+\infty} 0$ was proved in the first existence and uniqueness Theorem presented above.  Based on this fact, we infer that ${\bf u} \in \mathscr{C}^0_b (\mathbb{R}_+^\ast,V)$, which ends the proof of solution existence.

The estimate Eq.\eqref{ut22} is a consequence of Theorem \ref{sot2}.

\end{proof}

%%%%%%%%%%%%%%%%%%%%%%%%%%%%%%%%%%%%%%%%%%%%%%%%%%%%%%%%%%%
%%%%%%%%%%%%%%%%%%%%%%%%%%%%%%%%%%%%%%%%%%%%%%%%%%%%%%%%%%
\section{The smoothness of solutions.}\label{ie}
%%%%%%%%%%%%%%%%%%%%%%%%%%%%%%%%%%%%%%%%%%%%%%%%%%%%%%%%%%
%%%%%%%%%%%%%%%%%%%%%%%%%%%%%%%%%%%%%%%%%%%%%%%%%%%%%%%%%%%%%

The following estimates will be used in proving the smoothness of solutions.  They generalize those previously obtained in \cite{pal7}.

\begin{proposition}\label{ie1}

$\exists M>0$,  s.t. $\forall (\tau,\chi)\in[0,1]^2$, $\forall(x,t)\in\mathbb{R}_+$, $\forall\mu\geq\lambda_1$, one has:

\begin{enumerate}[(a)]
\item \label{a} $\left| \displaystyle\lim_{A\to+\infty}  \int_{-A}^{+A}T_\mu (x+iy)e^{(x+iy)t}\ud y \right|\leq \dfrac{M}{\mu^{\chi/(2-\delta)}t^\chi}$
\item \label{b} $\sqrt{\mu}\left| \displaystyle\lim_{A\to+\infty}  \int_{-A}^{+A}T_\mu w(x+iy)e^{(x+iy)t}\ud y \right|\leq \dfrac{M}{\mu^{[\omega(1-\tau)+\tau]/2}t^{\tau(1-\delta)}}$
\end{enumerate}

\end{proposition}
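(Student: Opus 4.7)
The plan is to prove parts (a) and (b) by interpolating each between two endpoint estimates corresponding to the parameter values $0$ and $1$. Once both endpoints are in hand, the pointwise log-convex bound
\begin{equation*}
|I| \;=\; |I|^{\chi}\,|I|^{1-\chi} \;\leq\; A^{\chi}\,B^{1-\chi}
\end{equation*}
(and analogously in $\tau$) applied to the two endpoint inequalities recovers the whole family. The exponent arithmetic works out: at $\chi=0$ one wants bound $M$, at $\chi=1$ bound $M/(\mu^{1/(2-\delta)}t)$; the convex combination is $M/(\mu^{\chi/(2-\delta)}t^{\chi})$, matching (a). Likewise for (b), combining $\sqrt{\mu}|I|\leq M/\mu^{\omega/2}$ (at $\tau=0$) and $\sqrt{\mu}|I|\leq M/(\mu^{1/2}t^{1-\delta})$ (at $\tau=1$) yields the stated $M/(\mu^{[\omega(1-\tau)+\tau]/2}t^{\tau(1-\delta)})$.

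Three of the four endpoints follow from results already available. For \textbf{(b) at $\tau=0$}, Lemma \ref{sol5} gives $\int |T_\mu w(iy)|\,dy \leq M/\mu^{1/(2-\delta)}$, and multiplying by $\sqrt{\mu}$ uses $\tfrac{1}{2}-\tfrac{1}{2-\delta}=-\omega/2$. For \textbf{(a) at $\chi=0$} and \textbf{(b) at $\tau=1$}, the target bounds are read off from parts (i) and (ii) of Proposition \ref{sot1} through the explicit representation \eqref{per3 2}: setting ${\bf S}_0=\mathbf{0}$ isolates the first integral so that $|\alpha_k|^2\leq M|\alpha_k^0|^2$ becomes $|\int T_{\lambda_k}(x+iy)e^{(x+iy)t}dy|\leq M$, while setting ${\bf u}_0=\mathbf{0}$ isolates the second integral so that $\lambda_k|\alpha_k|^2\leq M|b_k|^2/t^{2-2\delta}$ becomes $|\int T_{\lambda_k}w(x+iy)e^{(x+iy)t}dy|\leq M/(\lambda_k t^{1-\delta})$. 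The proofs of these estimates in \cite{pal7} do not use that $\mu$ is an eigenvalue, so they extend verbatim to all $\mu\geq\lambda_1$.

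The genuinely new work lies in \textbf{(a) at $\chi=1$}, where the target is $|\int T_\mu(x+iy)e^{(x+iy)t}dy|\leq M/(\mu^{1/(2-\delta)}t)$. My approach is to integrate by parts in $y$, writing $e^{(x+iy)t}=\frac{1}{it}\partial_y e^{(x+iy)t}$. Because $T_\mu(s)\sim 1/s$ at infinity, $T_\mu(x\pm iA)\to 0$ as $A\to +\infty$; the boundary terms vanish in the limit, leaving
\begin{equation*}
\int T_\mu(x+iy)\,e^{(x+iy)t}\,dy \;=\; -\frac{1}{t}\int T_\mu'(x+iy)\,e^{(x+iy)t}\,dy,
\end{equation*}
where $T_\mu'=dT_\mu/ds$. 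With $N(s)=s^{1-\delta}+s^{1-\beta}$ and $D(s)=s^{2-\delta}+s^{2-\beta}+\mu$, one has $T_\mu'=(N'D-ND')/D^{2}$, and it remains to show $\int_{\mathbb{R}}|T_\mu'(iy)|\,dy\leq M\mu^{-1/(2-\delta)}$ uniformly in $\mu\geq\lambda_1$.

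To get this $L^{1}$ bound I would split $\mathbb{R}$ exactly as in Lemma \ref{sol5}. On the outer range $|y|\geq \mu^{1/(2-\delta)}$, the estimates $|D(iy)|\gtrsim |y|^{2-\delta}$, $|N(iy)|,|D'(iy)|\lesssim |y|^{1-\delta}$, $|N'(iy)|\lesssim |y|^{-\delta}$ give $|T_\mu'(iy)|\lesssim |y|^{-2}$, whose integral from $\mu^{1/(2-\delta)}$ to $+\infty$ is exactly $O(\mu^{-1/(2-\delta)})$. On the inner range $|y|\leq \mu^{1/(2-\delta)}$, the lower bound $|D(iy)|\geq K\mu$ gives $|D|^{-2}\leq K\mu^{-2}$; pairing this with the pointwise estimates $|N'D|\lesssim \mu(|y|^{-\delta}+|y|^{-\beta})$ and $|ND'|\lesssim |y|^{2-2\delta}+|y|^{2-2\beta}$, each resulting monomial integrates from $0$ to $\mu^{1/(2-\delta)}$ to exactly $O(\mu^{-1/(2-\delta)})$ (a straightforward computation: the algebraic identity $(k+1)/(2-\delta)-m=-1/(2-\delta)$ holds in each case once the exponents are worked out).

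The main obstacle is precisely this termwise bookkeeping of $|T_\mu'(iy)|$ in the inner range. It is purely computational, invoking no tool not already used in Lemma \ref{sol5}; the proof concludes with the two-parameter log-convex interpolation described above.
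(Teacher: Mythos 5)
Your overall architecture matches the paper's: reduce to the four endpoint estimates $\chi\in\{0,1\}$, $\tau\in\{0,1\}$ and recover the full family by the pointwise bound $|I|\leq A^{\chi}B^{1-\chi}$ (the paper states only ``we only have to prove these estimates for $\chi=0$ and $\chi=1$, $\tau=0$ and $\tau=1$'', leaving the interpolation implicit), and your treatment of (b) at $\tau=0$ via Lemma \ref{sol5} is exactly the paper's. Where you genuinely diverge is (a) at $\chi=1$: the paper imports the branch-cut representation of $\lim_A\int_{-A}^{A}T_\mu(x+iy)e^{(x+iy)t}\ud y$ from \cite{pal7} (the jump $T_\mu(ze^{i\pi})-T_\mu(ze^{-i\pi})$ integrated against $e^{-zt}$ plus an exponentially small remainder) and rescales $z=u\mu^{1/(2-\delta)}$, whereas you integrate by parts in $y$ to extract the factor $1/t$ and then prove $\int_{\mathbb{R}}|T_\mu'(iy)|\ud y\lesssim\mu^{-1/(2-\delta)}$ by the same inner/outer splitting at $|y|=\mu^{1/(2-\delta)}$ used in Lemma \ref{sol5}. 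I checked your exponent bookkeeping on both ranges and it is correct; your route is more self-contained (it reuses only the lower bounds on $|D(iy)|$ already established in this paper), at the cost of an unstated contour shift from $\mathrm{Re}\,s=x$ to $\mathrm{Re}\,s=0$ — but the paper commits the identical omission in Lemma \ref{sol5} and in the remark following \eqref{per3 2}, so this is within the paper's own standard of rigor.

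There is, however, one genuine flaw: your derivation of the endpoint (b), $\tau=1$, from part (ii) of Proposition \ref{sot1} is circular. The paper obtains \ref{sot1}(ii) from eq.~(122) of \cite{pal7} only with a constant $M_T$ depending on the time horizon $T$, and explicitly defers the proof that $M$ can be chosen independently of $T$ to Corollary \ref{ic1} — which is itself deduced from Proposition \ref{ie1}, the very statement you are proving. So reading the $\tau=1$ endpoint off \ref{sot1}(ii) either presupposes the conclusion or delivers only a $T$-dependent constant, which is insufficient both for the proposition as stated ($M$ uniform over all $t\in\mathbb{R}_+$) and for its downstream use in removing the $T$-dependence from \ref{sot1}(ii). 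The repair is what the paper does: invoke part (iii) of Lemma 7.5 of \cite{pal7} directly, which gives $\sqrt{\mu}\,|\!\lim_A\int_{-A}^{A}T_\mu w(x+iy)e^{(x+iy)t}\ud y|\leq K/(\mu t^{1-\delta})+K e^{-at\mu^{1/(2-\delta)}}\mu^{-1/(2-\delta)}$ with a $t$-uniform constant, and absorb the exponential term using $\sup_{r\geq0}r^{1-\delta}e^{-ar}<+\infty$. (Your endpoint (a), $\chi=0$, via \ref{sot1}(i) does not suffer from this, since that estimate is already $T$-uniform, though citing Lemmas 7.2 and 7.4 of \cite{pal7} directly would be cleaner.)
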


\begin{proof}

We only have to prove these estimates for $\chi=0$ and $\chi=1$, $\tau=0$ and $\tau=1$. \\ 

(a)\\

The case $\chi=0$ has already been addressed in Lemma 7.4 and Lemma 7.2 in \cite{pal7}.

We now prove the case $\chi=1$. Lemma 7.4  and inequality 70 in \cite{pal7} give, for suitable $\kappa>0$ and $B>0$,

\begin{eqnarray}\label{ie01}
\left| \displaystyle\lim_{A\to+\infty}  \int_{-A}^{+A}T_\mu (x+iy)e^{(x+iy)t}\ud y \right| & \leq & K \left[\displaystyle\int_{0}^{+\infty}\left|T_\mu(ze^{i\pi})-T_\mu(ze^{-i\pi})\right|e^{-zt}\ud z+e^{-\kappa t \mu^{1/(2-\delta)}} \right]  \nonumber \\
& \leq & K \left[ \int_{0}^{+\infty} \dfrac{u^{1-\delta}+\lambda_1^{-(1-\delta)/(2-\delta)}}{(u^{2-\delta}+B)^2}e^{-u t \mu^{1/(2-\delta)}}\ud u + e^{-\kappa t \mu^{1/(2-\delta)}} \right] 
\end{eqnarray}

It implies that:

\begin{equation}\label{ie12}
\left| \displaystyle\lim_{A\to+\infty}  \int_{-A}^{+A}T_\mu (x+iy)e^{(x+iy)t}\ud y \right|  \leq K \left[ \int_{0}^{+\infty} e^{-u t \mu^{1/(2-\delta)}}\ud u + \dfrac{1}{t \mu^{1/(2-\delta)}}\right] \leq \dfrac{A}{t \mu^{1/(2-\delta)}}  
\end{equation}

which gives the statement in (a) for $\chi=1$.  \\

(b)\\

The case $\tau=0$ is addressed in Lemma \ref{sol5}.  The case $\tau=1$: from (iii) in Lemma 7.5 in \cite{pal7} we get

\beq\label{ie13}
\sqrt{\mu}\left| \displaystyle\lim_{A\to+\infty}  \displaystyle\int_{-A}^{+A}T_\mu w (x+iy) e^{(x+iy)t}\ud y \right|\leq \dfrac{K}{\mu t^{1-\delta}}+\dfrac{K e^{-at \mu^{1/(2-\delta)}} }{\mu^{1/2-\delta}}\leq \dfrac{M}{\mu t^{1-\delta}}
\eeq  

as $\displaystyle\mathop{\sup}_{t \mu^{1/(2-\delta)}\geq0}\left|\left(t \mu^{1/(2-\delta)}\right)^{1-\delta}e^{-a t \mu^{1/(2-\delta)}}  \right|\leq M<+\infty$.

\end{proof}

As a consequence we have the following extensions of estimates (iii) and (iv) of Theorem \ref{sot1}.

\begin{corollary}\label{ic1}
Let ${\bf u}_0\in H$, ${\bf S}_0\in L^2(\Omega)^9$.  Then:

\begin{enumerate}[(a)]
\item \label{a} $\exists M >0$, $\forall T\geq0$, , s.t. $\forall \gamma \in [0,\dfrac{1}{2-\delta}]$, $\forall \tau \in [0,1]$, $\forall t \in [0,T]$ and $\forall k\in \mathbb{N}^\ast$,

$$|\alpha_k(t)|^2\leq M  \left(  \dfrac{|\alpha_k^0|^2}{\lambda_k^{2\gamma}t^{2\gamma(2-\delta)}} + \dfrac{|b_k|^2}{\lambda_k^{\tau+\omega(1-\tau)}t^{2\tau(1-\delta)}} \right)$$

\item \label{b} $\exists M >0$, $\forall T\geq0$,  s.t. $\forall \gamma \in [0,1]$, $\forall \tau \in [0,\dfrac{1}{2-\delta}]$, $\forall t \in [0,T]$ and $\forall k\in \mathbb{N}^\ast$,

$$\lambda_k \left| \rho\displaystyle \ast \alpha_k \right|^2(t) \leq M  
\left( \dfrac{|\alpha_k^0|^2}{\lambda_k^{2\gamma-1}t^{2[\gamma(2-\delta)+\delta-1]}} + \dfrac{|b_k|^2}{\lambda_k^{-(1-\tau)(1-\omega)}t^{-2(1-\tau)(1-\delta)}} \right)$$

\end{enumerate}

\end{corollary}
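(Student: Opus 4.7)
The plan is to handle (a) and (b) separately, in both cases by inserting an explicit contour--integral representation and then invoking the pointwise estimates of Proposition \ref{ie1}.

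For part (a), start from \eqref{per3 2}, apply $|A-B|^2\leq 2|A|^2+2|B|^2$, and bound the two resulting Laplace--inversion integrals. The $\alpha_k^0$--piece is handled by Proposition \ref{ie1}(a) with $\chi:=\gamma(2-\delta)\in[0,1]$, producing the factor $\lambda_k^{-2\gamma}t^{-2\gamma(2-\delta)}$. The $\sqrt{\lambda_k}\,b_k$--piece is handled by Proposition \ref{ie1}(b) with the prescribed $\tau$; after squaring, the $\sqrt{\lambda_k}$ in \eqref{per3 2} gets absorbed into the $\mu^{-[\omega(1-\tau)+\tau]/2}$--decay, yielding the factor $\lambda_k^{-[\tau+\omega(1-\tau)]}t^{-2\tau(1-\delta)}$. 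Since the constants in Proposition \ref{ie1} depend only on $\alpha,\beta,\lambda_1$ and are uniform in $(x,t)\in\mathbb{R}_+^2$ and in $\mu\geq\lambda_1$, the $M$ obtained here does not depend on $T$, which is exactly what was deferred in the proof of Proposition \ref{sot1}(ii).

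Part (b) requires the analogous machinery for $\rho\ast\alpha_k$. Taking Laplace transforms and using $\widehat\rho(s)=s^{\beta-1}/(s^\alpha+1)$, one computes
\begin{equation*}
\widehat\rho(s)\,T_\mu(s)=\frac{1}{s^{2-\beta}(s^\alpha+1)+\mu},\qquad \widehat\rho(s)\,(T_\mu w)(s)=\frac{s^{\alpha-1}}{(s^\alpha+1)\bigl[s^{2-\beta}(s^\alpha+1)+\mu\bigr]}.
\end{equation*}
Inverting the Laplace transform along a vertical line (and then shifting to the imaginary axis by the Cauchy argument already invoked after \eqref{per3 2}) yields a representation for $\rho\ast\alpha_k$ of the same shape as \eqref{per3 2}. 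Restricted to $s=iy$, the common denominator equals $\mu+(iy)^{2-\delta}+(iy)^{2-\beta}$, which is precisely the quantity already controlled from below in Lemma \ref{sol5}. Splitting the contour at $|y|=\mu^{1/(2-\delta)}$ and applying those two lower bounds on the corresponding pieces gives the endpoint estimates at $\gamma\in\{0,1\}$ and $\tau\in\{0,1/(2-\delta)\}$; intermediate $(\gamma,\tau)$ are then obtained by convex interpolation of the exponents, exactly as in the proof of Proposition \ref{ie1}.

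The main obstacle is the endpoint contour analysis for the modified integrands in (b). In particular, one must verify that the extra factor $s^{\alpha-1}/(s^\alpha+1)$ appearing in $\widehat\rho\,(T_\mu w)$ supplies precisely the additional decay needed to push $\tau$ up to $1/(2-\delta)$, and that the new integrand for the $\alpha_k^0$--piece (which lost the $s^{1-\beta}(s^\alpha+1)$ factor from $T_\mu$) is still integrable on the imaginary axis so as to cover $\gamma$ all the way up to $1$ rather than only to $1/2$. A naive alternative --- bounding $\rho\ast|\alpha_k|$ by $K\!\int_0^t(t-s)^{-\delta}|\alpha_k(s)|\,ds$ and plugging in (a) --- only delivers the range $\gamma<1/(2-\delta)$ via a Beta--function estimate, so the direct Laplace route above is essential to reach the full range claimed. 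Uniformity of $M$ in $T$ is then automatic, because every integrand bound produced along the way is pointwise in $t$ with a constant depending only on $\alpha$, $\beta$, and $\lambda_1$.
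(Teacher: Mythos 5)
Your part (a) is exactly the paper's argument: insert the representation \eqref{per3 2}, estimate the $\alpha_k^0$--integral by Proposition \ref{ie1}(a) with $\chi=\gamma(2-\delta)\in[0,1]$ and the $\sqrt{\lambda_k}\,b_k$--integral by Proposition \ref{ie1}(b), and note that the constants there are uniform in $(x,t)$ and in $\mu\geq\lambda_1$, hence independent of $T$. Nothing to add there.

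For part (b) you depart from the paper, which disposes of it in one line: ``the statement follows from (a) above by convolution.'' Your objection to that line is well founded, but only for the $\alpha_k^0$--term: bounding $|\rho\ast\alpha_k|$ by $K\int_0^t(t-s)^{-\delta}|\alpha_k(s)|\,\ud s$ and inserting (a) produces the Beta integral $\int_0^t(t-s)^{-\delta}s^{-\gamma(2-\delta)}\,\ud s$, which converges only for $\gamma(2-\delta)<1$, i.e.\ $\gamma<1/(2-\delta)<1$, with a constant that blows up at that threshold; the range $\gamma\in[1/(2-\delta),1]$ asserted in (b) is genuinely beyond the reach of the paper's stated argument. (For the $b_k$--term the convolution is harmless, since $\tau(1-\delta)\leq1-\delta<1$ for every $\tau\in[0,1]$; the stated range $\tau\in[0,1/(2-\delta)]$ is a subset of this, and indeed the two parameter ranges in (b) read as though they had been interchanged with those of (a).) However, your substitute argument does not close the gap either. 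The imaginary-axis estimate you propose --- integrating $|s^{2-\beta}(s^\alpha+1)+\mu|^{-1}$ over $s=iy$ using the two lower bounds of Lemma \ref{sol5} with the cut at $|y|=\mu^{1/(2-\delta)}$ --- yields only $\int_{-\infty}^{+\infty}|\widehat{\rho}\,T_\mu(iy)|\,\ud y\leq M\mu^{-(1-\delta)/(2-\delta)}$, which is the $t$-uniform case $\gamma=(1-\delta)/(2-\delta)$, still short of $1/(2-\delta)$, let alone of $1$. The endpoint $\gamma=1$, i.e.\ the decay $\sqrt{\lambda_k}\,|\rho\ast\alpha_k|(t)\leq M\lambda_k^{-1/2}t^{-1}|\alpha_k^0|$, requires deforming the contour onto the branch cut and estimating $\int_0^{+\infty}|g(ze^{i\pi})-g(ze^{-i\pi})|e^{-zt}\,\ud z$ for $g=\widehat{\rho}\,T_\mu$ together with the pole contribution, exactly as in the proof of Proposition \ref{ie1}(a) for $\chi=1$; you correctly identify this as ``the main obstacle'' but do not carry it out. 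As written, then, your proof establishes (b) for $\gamma$ up to $(1-\delta)/(2-\delta)$, the paper's for $\gamma$ up to (but not including) $1/(2-\delta)$, and the interval $[1/(2-\delta),1]$ is proved by neither.
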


\begin{proof}

(a)\\

The statement in (a) follows from \eqref{per3 2} and Proposition \ref{ie1} (with $\gamma=\dfrac{\chi}{2-\delta}$).  \\

(b) \\

The statement follows from (a) above by convolution.\\

\end{proof}

We deduce from Corollary \ref{ic1}:

\begin{proposition}\label{o1}
 Let $\gamma\geq0$, $\eta>0$.  Then:

\begin{enumerate}[(a)]
\item \label{a} Assume ${\bf u}_0\in H_{1+\gamma+\omega}$ and ${\bf S}_0\in D_{1+\gamma}$.  Then ${\bf u}\in \mathscr{C}^1(\mathbb{R}_+,H_{\gamma})$.
\item \label{b} Assume ${\bf u}_0\in H_{3+\gamma-\omega+\eta}$ and ${\bf S}_0\in D_{3+\gamma }$.  Then ${\bf u}\in \mathscr{C}^2(\mathbb{R}_+^\ast,H_{\gamma})\cap W_{\text{loc}}^{2,p}(\mathbb{R}_+,H_{\gamma})$, $p\in[1,1/(1-\alpha)[$. 
\end{enumerate}

\end{proposition}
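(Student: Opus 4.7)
The plan is to differentiate the spectral representation ${\bf u}(t)=\sum_{k=1}^{+\infty}\alpha_k(t){\bf w}_k$ term by term and to control the resulting series in $H_\gamma$ by exploiting the Volterra equation \eqref{al:a1}, $\alpha_k'=-\lambda_k(\rho\ast\alpha_k)-b_k\sqrt{\lambda_k}\,W_0$, together with the pointwise estimates of Lemma \ref{sxl3} and Corollary \ref{ic1}.

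For part (a), squaring this ODE and summing with weight $\lambda_k^\gamma$ gives
\[
\sum_{k=1}^{+\infty}\lambda_k^{\gamma}|\alpha_k'(t)|^2\le 2\sum_{k=1}^{+\infty}\lambda_k^{\gamma+1}\bigl(\lambda_k|\rho\ast\alpha_k(t)|^2\bigr)+2\|W_0\|_\infty^2\sum_{k=1}^{+\infty}\lambda_k^{\gamma+1}|b_k|^2.
\]
I would then apply part (ii) of Lemma \ref{sxl3} with the choice $\gamma'=\gamma+1$ and $\theta'=\gamma+1+\omega$, so that $\theta'-\gamma'-\omega=0$ and no positive power of $t$ is produced; the first sum is then majorized by $M\sum_k(\lambda_k^{1+\gamma+\omega}|\alpha_k^0|^2+\lambda_k^{1+\gamma}|b_k|^2)$, which is finite under the hypotheses ${\bf u}_0\in H_{1+\gamma+\omega}$ and ${\bf S}_0\in D_{1+\gamma}$; the second sum is controlled by ${\bf S}_0\in D_{1+\gamma}$. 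The tail of $\sum_k\alpha_k'(t){\bf w}_k$ therefore vanishes uniformly in $t\in\mathbb{R}_+$; since each $\alpha_k\in\mathscr{C}^1(\mathbb{R}_+)$, this yields ${\bf u}'\in\mathscr{C}^0(\mathbb{R}_+,H_\gamma)$, i.e.\ ${\bf u}\in\mathscr{C}^1(\mathbb{R}_+,H_\gamma)$.

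For part (b), I differentiate once more. Using $(\rho\ast\alpha_k)'(t)=\rho(t)\alpha_k^0+(\rho\ast\alpha_k')(t)$ (valid for $t>0$, with $\alpha_k(0)=\alpha_k^0$) together with $W_0'=-E_\alpha$ (obtained by differentiating \eqref{wf1}) gives
\[
\alpha_k''(t)=-\lambda_k\rho(t)\alpha_k^0-\lambda_k(\rho\ast\alpha_k')(t)+b_k\sqrt{\lambda_k}\,E_\alpha(t).
\]
On any compact $[T_0,T_1]\subset\mathbb{R}_+^\ast$ every term is continuous, and a uniform bound on $\sum_k\lambda_k^{\gamma}|\alpha_k''(t)|^2$ will yield the $\mathscr{C}^2(\mathbb{R}_+^\ast,H_\gamma)$ conclusion. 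The first and third summands are controlled by the pointwise bounds $\rho(t)\le K t^{-\delta}$ and $E_\alpha(t)\sim K t^{\alpha-1}$ near $0$, combined with the inclusions $H_{3+\gamma-\omega+\eta}\hookrightarrow H_{\gamma+2}$ (which holds since $1-\omega+\eta>0$) and $D_{3+\gamma}\hookrightarrow D_{\gamma+1}$. The middle summand is treated by re-substituting $\alpha_k'=-\lambda_k(\rho\ast\alpha_k)-b_k\sqrt{\lambda_k}W_0$, giving $\rho\ast\alpha_k'=-\lambda_k(\rho\ast\rho\ast\alpha_k)-b_k\sqrt{\lambda_k}(\rho\ast W_0)$, and controlling the two pieces by iterating Corollary \ref{ic1}(b) together with Young's inequality. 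The $W^{2,p}_{\text{loc}}$ statement follows by integrating the pointwise bound in $t$: $\rho\in L^p_{\text{loc}}(\mathbb{R}_+)$ for $p<1/\delta$, whereas $E_\alpha(t)\sim t^{\alpha-1}$ is in $L^p_{\text{loc}}(\mathbb{R}_+)$ exactly when $p<1/(1-\alpha)$; since $\delta=\beta-\alpha<1-\alpha$, the $E_\alpha$ (equivalently $W_0'$) contribution supplies the binding exponent, explaining the stated range $p\in[1,1/(1-\alpha)[$.

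The main obstacle is the uniform estimation of $\sum_k\lambda_k^{\gamma+2}|(\rho\ast\alpha_k')(t)|^2$: the naive substitution above produces an iterated convolution $\rho\ast\rho\ast\alpha_k$ whose direct spectral bound loses one power of $\lambda_k$ relative to Lemma \ref{sxl3}, which is precisely why the hypothesis is strengthened from $H_{1+\gamma+\omega}$ to $H_{3+\gamma-\omega+\eta}$ with the extra margin $\eta>0$. An equivalent route would extend the contour-integral bounds of Proposition \ref{ie1} to the symbols $(x+iy)^2T_\mu(x+iy)$ and $(x+iy)^2T_\mu w(x+iy)$ by the same arguments already used there for $\chi=1$; both approaches produce the same $\lambda_k^{1-\omega+\eta}$ loss, exactly absorbed by the strengthened regularity assumptions on $({\bf u}_0,{\bf S}_0)$.
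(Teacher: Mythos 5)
Your proposal is correct and follows essentially the same route as the paper: part (a) is the paper's argument (weighted summation of \eqref{al:a1} combined with Lemma \ref{sxl3}(ii) at the balanced point $\theta-\gamma=\omega$, so that no positive power of $t$ appears), and part (b) uses the same second differentiation of \eqref{al:a1}, the re-substitution producing $\rho\ast\rho\ast\alpha_k$ and $\rho\ast W_0$, and the near-extremal parameter choice in Corollary \ref{ic1}(a) whose $\epsilon$-defect is absorbed by the margin $\eta$. Two minor refinements on your side: your formula for $\alpha_k''$ correctly retains the boundary term $-\lambda_k\rho(t)\alpha_k^0$ from differentiating the convolution (absorbed by ${\bf u}_0\in H_{\gamma+2}$), and you make explicit that the binding exponent $1/(1-\alpha)$ in the $W^{2,p}_{\text{loc}}$ statement comes from $W_0'=-E_\alpha\sim t^{\alpha-1}$, which the paper's estimate \eqref{xot1} contains but does not spell out.
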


\begin{proof}

(a)\\

Based on eq.\eqref{al:a1} and $W_0\in L^{\infty}(\mathbb{R}_+)$, we infer that:

\beq\label{o1 1}
\lambda_k^{\gamma}|\alpha_k'(t)|^2 \leq \lambda_k^{1+\gamma} \left( \lambda_k \left| \rho\displaystyle \ast \alpha_k\right|^2(t)  \right)+\lambda_k^{1+\gamma}|b_k|^2 \|W_0\|_\infty^2
\eeq

Now, Lemma \ref{sxl3} with $\theta=\gamma+\omega$, leads to

\beq\label{o1 2}
\lambda_k^{1+\gamma}\left( \lambda_k \left| \rho\displaystyle \ast \alpha_k\right|^2(t)  \right) \leq A\lambda_k \left( \lambda_k^{\gamma+\omega}|\alpha_k^0|^2+\lambda_k^{\gamma}|b_k|^2 \right) 
\eeq

From Eqs.\eqref{o1 1}-\eqref{o1 2},

\beq\label{o1 3}
\lambda_k^{\gamma}|\alpha_k'(t)|^2 \leq A \left( \lambda_k^{1+\gamma+\omega}|\alpha_k^0|^2+\lambda_k^{1+\gamma}|b_k|^2 \right)
\eeq

Since ${\bf u}_0\in H_{1+\gamma+\omega}$ and ${\bf S}_0\in D_{1+\gamma}$, $\lambda_k^\gamma  |\alpha_k'(t)|^2<+\infty$.  Hence  $\displaystyle\sum_{k=1}^{+\infty} \alpha_k'\otimes {\bf w}_k$ converges in $\mathscr{C}^0(\mathbb{R}_+,H_{ \gamma})$.  Since by Lemma \ref{sxl3}  $\,\displaystyle\sum_{k=1}^{+\infty} \alpha_k\otimes {\bf w}_k$ converges in $\mathscr{C}^0(\mathbb{R}_+,H_{1+ \gamma})$,  it also converges in $ \mathscr{C}^0(\mathbb{R}_+,H_{\gamma})$.  Finally ${\bf u}\in \mathscr{C}^1(\mathbb{R}_+,H_{\gamma})$.  \\

(b)\\

Observe that $1+\gamma+\omega \leq 3+\gamma-\omega+\eta$ and $1+\gamma \leq 3+\gamma $.  Consequently ${\bf u}_0\in H_{1+\gamma+\omega}$, ${\bf S}_0\in H_{1+\gamma}$.  Next, (a) above ensures that ${\bf u}\in \mathscr{C}^1(\mathbb{R}_+,H_{\gamma})$.  We now deduce several estimates for the second order derivatives.  From eq.\eqref{al:a1} it follows that, for $t>0$, $\alpha_k''(t)=-\lambda_k \left( \rho  \displaystyle  \ast \alpha_k' \right)(t) -\sqrt{\lambda_k}b_k W_0'(t)=\lambda_k^2 \left(\rho \displaystyle \ast \rho\displaystyle \ast  \alpha_k \right)(t)+\lambda_k^{3/2}b_k \left(\rho \displaystyle \ast W_0 \right)(t)-\sqrt{\lambda_k}b_k W_0'(t) \in \mathscr{C}^0(\mathbb{R}_+^\ast)$ since $\alpha_k \in  \mathscr{C}^0(\mathbb{R}_+)$, $ W_0 \in \mathscr{C}^0(\mathbb{R}_+) \cap \mathscr{C}^1(\mathbb{R}_+^\ast)$ and $0\leq \rho(t)\leq kt^{-\delta}$.

Let now $\epsilon>0$ be small enough.  Part (a) in Corollary \ref{ic1} with $\gamma=(1-\epsilon)/(2-\delta)\in\left[0,\dfrac{1}{2-\delta}\right]$, $\tau=1 $ and $|W'_0(t)|\leq \dfrac{k}{t^{1-\alpha}}$ (see \cite{pal7}), gives

\begin{eqnarray}\label{xot1}
\lambda_k^{\gamma/2} |\alpha_k''(t)| & \leq & M_T \lambda_k^{2+\gamma/2} \left[ t^{1-2\delta} \displaystyle \ast  \left( \dfrac{|\alpha_k^0|}{\lambda_k^{(1-\epsilon)/(2-\delta)}t^{1-\epsilon}}+\dfrac{|b_k|}{\lambda_k^{1/2 }t^{1-\delta}} \right)  \right] \nonumber\\
& + & M_T \lambda_k^{(1+\gamma)/2}\left( \dfrac{\lambda_k }{t^{\delta-1}}+\dfrac{ 1 }{t^{1-\alpha}} \right)|b_k|  \nonumber\\
& \leq & M_T \left( \dfrac{\lambda_k^{(3+\gamma)/2}}{t^{\delta-1}}+\dfrac{\lambda_k^{(1+\gamma)/2}}{t^{1-\alpha}}  \right) |b_k| \nonumber\\
& + & M_T\dfrac{\lambda_k^{2-(1-\epsilon)/(2-\delta)+\gamma/2}}{t^{2\delta-\epsilon-1}}|\alpha_k^0|
\end{eqnarray}

Observe that, for $\epsilon>0$ small enough, $2(3+\gamma)/2 \leq 3+\gamma $ and $2(\gamma+1)/2 \leq 3+\gamma $.  Also, for $\epsilon>0$ small enough, $2\left[2+\gamma/2-(1-\epsilon)/(2-\delta)  \right] \leq 4+\gamma-2/(2-\delta)+\eta$.

Hence, by \eqref{xot1}, and since ${\bf u}_0 \in H_{3+\gamma-\omega+\eta}=H_{4+\gamma-2/( 2-\delta )+\eta}$ and ${\bf S}_0 \in D_{3+\gamma}$, we get ${\bf u} \in \mathscr{C}^2(\mathbb{R}^\ast_+,H_\gamma)$.

\end{proof}

Proceeding as before (see Proposition \ref{o1}) one obtains the following smoothness properties:

\begin{proposition}\label{ot1}
Let $\gamma\geq0$, $\eta>0$.

\begin{enumerate}[(a)]
\item \label{a} Assume ${\bf u}_0\in H_{2+\gamma-\omega+\eta}$, ${\bf S}_0\in D_{2+\gamma}$.  Then ${\bf S}\in  \mathscr{C}^1(\mathbb{R}_+^\ast,D_\gamma) \cap W_{\text{loc}}^{1,p}(\mathbb{R}_+,D_\gamma)$, for any $p\in [1,1/(1-\alpha)[$.
\item \label{b} Assume ${\bf u}_0\in H_{4+\gamma-\omega+\eta}$, ${\bf S}_0\in D_{4+\gamma}$.  Then ${\bf S}\in  \mathscr{C}^2(\mathbb{R}_+^\ast,D_\gamma) $.
\end{enumerate}

\end{proposition}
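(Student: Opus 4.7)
The plan is to follow the strategy of Proposition \ref{o1}, but now differentiate the series representation \eqref{t14 3},
\[
{\bf S}(t)\;=\;\sum_{k=1}^{+\infty}(\rho\ast\alpha_k)(t)\bigl(\nabla{\bf w}_k+\nabla^T{\bf w}_k\bigr)+W_0(t)\,{\bf S}_0,
\]
term by term in time, and show that the differentiated series converges in $D_\gamma$, uniformly on every compact subset of $\mathbb{R}_+^\ast$. A change of variable in the causal convolution gives, for $t>0$,
\[
\frac{d}{dt}(\rho\ast\alpha_k)(t)\;=\;\rho(t)\,\alpha_k^0+(\rho\ast\alpha_k')(t),
\]
and substituting Eq.\eqref{al:a1} rewrites this as $\rho(t)\alpha_k^0-\lambda_k(\rho\ast\rho\ast\alpha_k)(t)-b_k\sqrt{\lambda_k}(\rho\ast W_0)(t)$. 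Accordingly, $\partial_t{\bf S}$ is a sum of three series (each weighted by $\nabla{\bf w}_k+\nabla^T{\bf w}_k$) plus the boundary contribution $W_0'(t){\bf S}_0$.

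For part (a), Lemma \ref{sxl4}(c) gives $\|\nabla{\bf w}_k+\nabla^T{\bf w}_k\|_{D_\gamma}^2=\lambda_k(1+\lambda_k^\gamma)$; combined with the sharp bounds of Corollary \ref{ic1} on $|\alpha_k|^2$ and $\lambda_k|\rho\ast\alpha_k|^2$, with the parameters $(\gamma,\tau)$ there tuned \emph{piece by piece} to balance $\lambda_k$-powers against temporal singularities at $t=0$, one bounds the $k$-th summand of each of the three series. Under the hypotheses ${\bf u}_0\in H_{2+\gamma-\omega+\eta}$ and ${\bf S}_0\in D_{2+\gamma}$, which by Lemma \ref{sxl4}(a),(d) are equivalent to $\sum_k\lambda_k^{2+\gamma-\omega+\eta}|\alpha_k^0|^2<+\infty$ and $\sum_k\lambda_k^{2+\gamma}|b_k|^2<+\infty$, the tail of each series is majorized by a summable expression uniformly on any $[T_1,T_2]\subset\mathbb{R}_+^\ast$. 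This yields ${\bf S}\in\mathscr{C}^1(\mathbb{R}_+^\ast,D_\gamma)$. For the $W^{1,p}_{\text{loc}}$ statement, only $\rho(t)\leq Kt^{-\delta}$ (with $\delta<1-\alpha$) and $|W_0'(t)|\leq Kt^{\alpha-1}$ are singular at $t=0$; since $\rho\in L^p_{\text{loc}}$ for $p<1/\delta$ while $W_0'\in L^p_{\text{loc}}$ only for $p<1/(1-\alpha)$, the latter constraint is the sharp one and gives the stated range.

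For part (b), I would differentiate once more. Applying the convolution-boundary identity a second time gives
\[
\partial_t^2(\rho\ast\alpha_k)(t)\;=\;\rho'(t)\,\alpha_k^0+\rho(t)\,\alpha_k'(0)+(\rho\ast\alpha_k'')(t),
\]
with $\alpha_k'(0)=-\sqrt{\lambda_k}\,b_k$ (from Eq.\eqref{al:a1} and $W_0(0)=1$) and $\alpha_k''$ expanded as in the proof of Proposition \ref{o1}(b). Each resulting term carries at most two extra powers of $\sqrt{\lambda_k}$ compared to part (a), together with a strictly more singular time factor (of type $\rho'$, $W_0''$ and convolution boundary contributions), so that the stronger hypotheses ${\bf u}_0\in H_{4+\gamma-\omega+\eta}$ and ${\bf S}_0\in D_{4+\gamma}$ precisely absorb the two extra $\lambda_k$-powers in the $D_\gamma$-summability estimate. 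Uniform convergence of the twice-differentiated series on every compact subset of $\mathbb{R}_+^\ast$ then gives ${\bf S}\in\mathscr{C}^2(\mathbb{R}_+^\ast,D_\gamma)$. Continuity up to $t=0$ is not expected here, because the boundary terms produced by the second differentiation (typically $\rho'(t)\alpha_k^0$ and $W_0''(t){\bf S}_0$) are genuinely too singular near $t=0$.

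The main obstacle is the delicate per-term balancing in Corollary \ref{ic1}: no single global choice of the parameters $(\gamma,\tau)$ makes every summand in the differentiated series simultaneously integrable and summable, because the relative weights of $\lambda_k$ and of the $t=0$ singularity differ across the three (or six, for part (b)) contributions. Each piece has to be estimated with its own choice of exponents, so that after absorbing the factor $\sqrt{\lambda_k}$ coming from $\|\nabla{\bf w}_k+\nabla^T{\bf w}_k\|_{D_\gamma}$, tail summability in $k$ is controlled by the hypothesis norms of $({\bf u}_0,{\bf S}_0)$ while the time-dependence stays locally bounded on $\mathbb{R}_+^\ast$ (respectively locally in $L^p$ for $p<1/(1-\alpha)$, in the Sobolev claim of part (a)).
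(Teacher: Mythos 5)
Your overall strategy is the same as the paper's: differentiate the representation \eqref{t14 3} term by term, use \eqref{al:a1} to rewrite $\alpha_k'$ in terms of $\rho\ast\alpha_k$ and $W_0$, estimate each resulting series in $D_\gamma$ via Lemma \ref{sxl4}(c) and Corollary \ref{ic1}, and trace the $p<1/(1-\alpha)$ restriction to the singularity of $W_0'$ at $t=0$. Your bookkeeping of the convolution boundary terms ($\rho(t)\alpha_k^0$ in part (a), and $\rho'(t)\alpha_k^0+\rho(t)\alpha_k'(0)$ in part (b)) is in fact more careful than the paper's written argument, which works directly with $\sum_k(\rho\ast\alpha_k')\otimes(\nabla{\bf w}_k+\nabla^T{\bf w}_k)+W_0'\otimes{\bf S}_0$; those extra terms are harmless under your hypotheses, as you indicate.

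The gap is that the ``delicate per-term balancing'' you defer to is precisely the content of the proof, and you never show that a working choice of exponents exists; in particular you never identify where the strict inequality $\eta>0$ is spent, although it is the only reason the hypothesis reads $H_{2+\gamma-\omega+\eta}$ rather than $H_{2+\gamma-\omega}$. The difficulty is concrete: the term $\lambda_k(\rho\ast\rho\ast|\alpha_k|)$ forces you to convolve $t^{1-2\delta}$ with the bound on $|\alpha_k|$ from Corollary \ref{ic1}(a), and the endpoint choice $\gamma=1/(2-\delta)$ there produces a factor $t^{-1}$, whose convolution with $t^{1-2\delta}$ diverges. The paper resolves this by introducing $\tilde\delta=2\tilde\omega/(1+\tilde\omega)<\delta$ with $\tilde\omega=\omega-\eta$ and applying Corollary \ref{ic1}(a) with $\gamma=1/(2-\tilde\delta)$: the temporal exponent becomes $(2-\delta)/(2-\tilde\delta)<1$, so the convolution is finite, while the weakened decay $\lambda_k^{-2/(2-\tilde\delta)}=\lambda_k^{-(1+\tilde\omega)}$ costs exactly a factor $\lambda_k^{\eta}$, absorbed by ${\bf u}_0\in H_{2+\gamma-\omega+\eta}$. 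Without making this (or an equivalent) choice explicit, your argument does not establish convergence of the differentiated series, and a reader cannot verify that the stated hypotheses suffice. The same issue recurs, compounded, in part (b), whose proof the paper omits; there your claim that the stronger hypotheses ``precisely absorb the two extra $\lambda_k$-powers'' again needs an explicit exponent selection to be checkable.
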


\begin{proof}

(a)\\

We limit the proof to the case $0<\eta<\omega$.  Denote $\tilde{\omega}=\omega-\eta$; hence $0<\tilde{\omega}<\omega<1$.

Since ${\bf u}_0\in H_{2+\gamma-\tilde{\omega}}$, ${\bf S}_0\in D_{2+\gamma-\tilde{\omega}-\omega}$, based on Theorem \ref{sot2}, we have that ${\bf S} \in \mathscr{C}^0(\mathbb{R}_+,D_{2+\gamma-\tilde{\omega}-\omega})$.  Next, as $0<\tilde{\omega}<\omega<1$ entails $\gamma<2+\gamma-\tilde{\omega}-\omega$, one gets ${\bf S} \in \mathscr{C}^0(\mathbb{R}_+,D_\gamma)$.  

Next we obtain an estimate for $\displaystyle {\bf S}'(t)$.  Observe first that $W_0\in L^\infty(\mathbb{R}_+)$ and that $\alpha_k'(t)=-\lambda_k \left( \rho\displaystyle \ast \alpha_k \right)(t)-\sqrt{\lambda_k}b_k W_0(t)$.  Therefore:

$$\lambda_k^{\gamma/2}\sqrt{\lambda_k}\left| \rho\displaystyle \ast \alpha_k' \right|\leq A \lambda_k^{(\gamma+1)/2}\left[  \lambda_k \left( \rho\displaystyle \ast \rho\displaystyle \ast |\alpha_k| \right)+\sqrt{\lambda_k} \left(\rho\displaystyle \ast |b_k| \right)  \right](t) $$

Now, part (a) of Corollary \ref{ic1}, with $\tilde{\delta}=2\tilde{\omega}/(1+\tilde{\omega})<2\omega/(1+\omega)=\delta$, $\gamma=1/(2-\tilde{\delta})$, $\tilde{\delta}\in ]0,\delta[$, and $\tau=1$, gives

$$|\alpha_k(t)| \leq M_T \left( \dfrac{|\alpha_k^0|}{\lambda_k^{1/(2-\tilde{\delta})}t^{(2-\delta)/(2-\tilde{\delta})}}+\dfrac{|b_k|}{\sqrt{\lambda_k}t^{1-\delta}}  \right)$$

However, $\left| \rho\displaystyle \ast \rho\right|(t) \leq K t^{1-2\delta}$.  Also, since $0< \tilde{\delta}<\delta$, then $(2-\delta)/(2-\tilde{\delta})\in[0,1[$. One infers that

\begin{eqnarray}\label{ot11}
\lambda_k^{\gamma/2}\sqrt{\lambda_k}\left| \rho\displaystyle \ast \alpha_k' \right|(t) & \leq & M_T \lambda_k^{1+\gamma/2} \bigg[ \sqrt{\lambda_k} \left( \dfrac{1}{t^{2\delta-1}}\displaystyle \ast \dfrac{|\alpha_k^0|}{\lambda_k^{1/(2-\tilde{\delta})}t^{(2-\delta)/(2-\tilde{\delta})}} \right) \nonumber\\
& + &  \sqrt{\lambda_k} \left( \dfrac{1}{t^{2\delta-1}}\displaystyle \ast \dfrac{|b_k|}{\sqrt{\lambda_k}t^{1-\delta}}  \right) + \dfrac{|b_k|}{t^{\delta-1}} \nonumber\\
& \leq & M_T \lambda_k^{1+\gamma/2} \left( \lambda_k^{-\tilde{\omega}/2}t^{2-2\delta-(2-\delta)/(2-\tilde{\delta})}|\alpha_k^0|+t^{1-\delta}|b_k|  \right) 
\end{eqnarray}

Let $a=2-2\delta-(2-\delta)/(2-\tilde{\delta})=1-2\delta+(\delta-\tilde{\delta})/(2-\tilde{\delta}) $, and $b=1-\delta>0$.  Then:

\beq\label{69}
\left| \lambda_k^{\gamma/2}  \sqrt{\lambda_k}\left( \rho\displaystyle \ast \alpha_k' \right) \right|^2 (t) \leq  M_T \left[ \lambda_k^{2+\gamma-\tilde{\omega}}|\alpha_k^0|^2 t^{2a}+\lambda_k^{2+\gamma}t^{2b}|b_k|^2  \right]
\eeq  

Recall that - as stated in (c) of Lemma \ref{sxl4} - that $\left( \nabla {\bf w}_k+ \nabla^T {\bf w}_k\right)_{k\in\mathbb{N}^\ast }$ is an orthogonal sequence of functions that belongs to $D_\gamma$, and $\left\| \nabla {\bf w}_k+ \nabla^T {\bf w}_k \right\|^2_{D_\gamma} = (1+\lambda_k^{\gamma})\lambda_k$.  Consequently, using the estimate given above and that ${\bf u}_0\in H_{2+\gamma-\tilde{\omega}}$, ${\bf S}_0\in D_{2+\gamma}$, leads to the fact that $\displaystyle \sum_{k=1}^{+\infty}\left( \rho \displaystyle  \ast \alpha_k'  \right) \otimes \left( \nabla {\bf w}_k+ \nabla^T {\bf w}_k\right)$ converges in $\mathscr{C}^0(\mathbb{R}_+^\ast,D_\gamma)$.  Next, from Eq in \cite{pal7}, $W_0'\otimes {\bf S}_0\in \mathscr{C}^0(\mathbb{R}_+^\ast,D_\gamma)$.  Therefore $\displaystyle \sum_{k=1}^{+\infty}\left( \rho \displaystyle  \ast \alpha_k'  \right) \otimes \left( \nabla {\bf w}_k+ \nabla^T {\bf w}_k\right)+W_0'\otimes {\bf S}_0\in \mathscr{C}^0(\mathbb{R}_+^\ast,D_\gamma)$.  Hence ${\bf S} \in \mathscr{C}^1(\mathbb{R}_+^\ast,D_\gamma)$.

Whenever $a\geq0$, by Eq.\eqref{69}, $\displaystyle \sum_{k=1}^{+\infty}\left( \rho \displaystyle  \ast \alpha_k'  \right) \otimes \left( \nabla {\bf w}_k+ \nabla^T {\bf w}_k\right)+W_0'\otimes {\bf S}_0$ belongs to $\mathscr{C}^0(\mathbb{R}_+  ,D_\gamma)$, thus belongs to $L^p_{\text{loc}}(\mathbb{R}_+ ,D_\gamma)$ for any $1 \leq p < +\infty$.  

Now, whenever $a<0$, $-a-(1-\alpha)=2\delta-2-(\delta-\tilde{\delta})/(2-\delta)-1+\alpha=[-3-(\delta-\tilde{\delta})/(2-\delta)+(2\delta+\alpha)]<0$.  We conclude that $\displaystyle \sum_{k=1}^{+\infty}\left( \rho \displaystyle  \ast \alpha_k'  \right) \otimes \left( \nabla {\bf w}_k+ \nabla^T {\bf w}_k\right)$ converges in $L^q_{\text{loc}}(\mathbb{R}_+ ,D_\gamma)$, for any  $q\in [1,1/(1-\alpha)]$.  Moreover, from \cite{pal7} we observe that $\| W_0'(t){\bf S}_0 \|_{D_\gamma}\leq K/t^{1-\alpha}$.  It implies that $W_0'\otimes{\bf S}_0\in L^q_{\text{loc}}(\mathbb{R}_+,D_\gamma)$ for any $q\in [1,1/(1-\alpha)[$.  Therefore     

$$\displaystyle \sum_{k=1}^{+\infty}\left( \rho \displaystyle  \ast \alpha_k'  \right) \otimes \left( \nabla {\bf w}_k+ \nabla^T {\bf w}_k\right)+W_0'\otimes{\bf S}_0 \in L^q_{\text{loc}}(\mathbb{R}_+ ,D_\gamma)$$, 

for any $q\in [1,1/(1-\alpha)[$ and irrespective of whether $a$ is positive or negative.

Eventually ${\bf S}\in W_{\text{loc}}^{1,q}(\mathbb{R}_+,D_\gamma)$ for any $q\in [1,1/(1-\alpha)[$.\\

(b)\\

The proof is omitted.\\

\end{proof}

From Proposition \ref{o1} we can infer the existence of smooth solutions to eqs.\eqref{bvp1}.  Assume that ${\bf u}_0 \in H_{5+\omega}$ and ${\bf S}_0 \in D_{5}\cap \mathscr{C}^1(\overline{\Omega})^9$.  Then, the solution $({\bf u},{\bf S})$ the existence of which is granted by Theorem \ref{ut1} of Section \ref{wf}, complies with the statement (a) of Proposition \ref{o1}, that is ${\bf u} \in \mathscr{C}^1(\mathbb{R}_+,H_{4 })$.  Since $H_{4 } \hookrightarrow H^4(\Omega) \hookrightarrow \mathscr{C}^2(\overline{\Omega})$ (see Section \ref{so} and by Sobolev's injection), one has ${\bf u} \in \mathscr{C}^1(\mathbb{R}_+,\mathscr{C}^2(\overline{\Omega})^3 ) $ and $\left( \nabla {\bf u}+ \nabla^T {\bf u} \right) \in \mathscr{C}^1(\mathbb{R}_+,\mathscr{C}^1(\overline{\Omega})^9 ) $.  One also has ${\bf S}_0\in  \mathscr{C}^1(\overline{\Omega})^9 $, $W_0 \in \mathscr{C}^0(\mathbb{R}_+) \cap \mathscr{C}^1(\mathbb{R}_+^\ast)$ and $\rho \in L^1_{\text{loc}}(\mathbb{R}_+)$.  Consequently ${\bf S}=\rho\ast\left( \nabla {\bf u}+ \nabla^T {\bf u} \right)+W_0 \otimes {\bf S}_0 \in \mathscr{C}^0(\mathbb{R}_+ ,\mathscr{C}^1(\overline{\Omega})^9 ) \cap \mathscr{C}^1(\mathbb{R}_+^\ast,\mathscr{C}^1(\overline{\Omega})^9 )$.  All the precedent arguments eventually lead to the conclusion that $({\bf u},{\bf S}) \in \mathscr{C}^1( \mathbb{R}_+ ,\mathscr{C}^2(\overline{\Omega})^3 ) \times \left[ \mathscr{C}^0(\mathbb{R}_+ ,\mathscr{C}^1(\overline{\Omega})^9 ) \cap \mathscr{C}^1(\mathbb{R}_+^\ast ,\mathscr{C}^1(\overline{\Omega})^9 ) \right] $, whenever ${\bf u}_0 \in H_{5+\omega}$ and ${\bf S}_0 \in D_{5 } \cap \mathscr{C}^1(\overline{\Omega})^9$.

%%%%%%%%%%%%%%%%%%%%%%%%%%%%%%%%%%%%%%%%%%%%%%%%%%%%%%%%%%%
%%%%%%%%%%%%%%%%%%%%%%%%%%%%%%%%%%%%%%%%%%%%%%%%%%%%%%%%%%
\section{Final comments}
%%%%%%%%%%%%%%%%%%%%%%%%%%%%%%%%%%%%%%%%%%%%%%%%%%%%%%%%%%
%%%%%%%%%%%%%%%%%%%%%%%%%%%%%%%%%%%%%%%%%%%%%%%%%%%%%%%%%%%%%

Fractional calculus has a long history that parallels the classical analysis \cite{mr,py,skm}.  It has long been used in modeling natural phenomena: for a quick glimpse see for example \cite{ade1,ade2,ade4,ams,agr,bal1,cm,jd1,jd2,dz, gds,han4,k1,ko2,lion,ne1,mak,ok,py,manero2,sered2,tpx,xly,yz2,yz}, and references cited therein.  In particular, fractional derivative CEs have been found to accurately predict stress relaxation of viscoelastic fluids in the glass transition and glassy (high frequency) states.

The results presented here enrich and complement the linear stability analysis within the framework of variational/weak solutions initiated in \cite{pal7}.  We have proved results regarding existence, uniqueness, smoothness and continuity at $t=0$ of the solution to the initial boundary value problem stated in Section \ref{intro}.  Moreover, this work is related to that of Shaw, Whiteman and co-workers on the well posedness, existence and uniqueness of weak solutions for similar in nature hereditary - type integral models (see for example \cite{sw1},\cite{sw2},\cite{sw3},\cite{sw4}), as well as to that reported in \cite{pal5}, \cite{pal6},\cite{vtb}.

The matter of the stability of the original nonlinear CE is an open question on which future work shall focus.

%%%%%%%%%%%%%%%%%%%%%%%%%%%%%%%%%%%%%%%%%%%%%%%%%%%%%%%%%%%%
%%%%%%%%%%%%%%%%%%%%%%%%%%%%%%%%%%%%%%%%%%%%%%%%%%%%%%%%%%%%
\section{Aknowledgements}
%%%%%%%%%%%%%%%%%%%%%%%%%%%%%%%%%%%%%%%%%%%%%%%%%%%%%%%%%%%%
%%%%%%%%%%%%%%%%%%%%%%%%%%%%%%%%%%%%%%%%%%%%%%%%%%%%%%%%%%%%

The authors are grateful to Emeritus Associate Professor Michel Charnay, P\^ole de Math\'ematiques, INSA-Lyon, for kind support and encouragements.

%%%%%%%%%%%%%%%%%%%%%%%%%%%%%%%%%%%%%%%%%%%%%
%%%%%%%%%%%%%%%%%%%%%%%%%%%%%%%%%%%%%%%%%%%%
\section{Bibliography}
%%%%%%%%%%%%%%%%%%%%%%%%%%%%%%%%%%%%%%%%%%%%%
%%%%%%%%%%%%%%%%%%%%%%%%%%%%%%%%%%%%%%%%%%%%%

\end{document}